\numberwithin{equation}{section}
\newcommand{\rr}{\mathbb{R}}
\newcommand{\ga}{\Gamma}
\newcommand{\lan}{\langle}
\newcommand{\ran}{\rangle}
\newcommand{\be}{\begin{eqnarray*}}
\newcommand{\bel}{\begin{eqnarray}}
\newcommand{\ee}{\end{eqnarray*}}
\newcommand{\eel}{\end{eqnarray}}
\newcommand{\ba}{\begin{aligned}}
\newcommand{\ea}{\end{aligned}}
\newcommand{\de}{\Delta}
\newcommand{\al}{\alpha}
\newcommand{\na}{\nabla}
\newcommand{\ep}{\epsilon}
\newcommand{\pa}{\partial}
\newcommand{\wh}{\widehat}
\newcommand{\vs}{\vspace{5 mm}}
\newcommand{\pay}{\partial_{y_1}}
\newcommand{\payy}{\partial_{y_2}}
\newcommand{\CC}{C_{2,\infty}}
\newtheorem{thm}{Theorem}
\newtheorem{theorem}{Theorem}
\newtheorem{lem}{Lemma}
\newtheorem{pro}{Proposition}
\newtheorem{rmk}{Remark}
\newtheorem{remark}{Remark}
\numberwithin{remark}{section}
\numberwithin{lem}{section}
\newcommand{\norm}[1]{\left\lVert#1\right\rVert}
\newcommand{\abs}[1]{\left\vert#1\right\vert}
\newcommand{\set}[1]{\left\{#1\right\}}
\newcommand{\brak}[1]{\langle #1 \rangle}
\newcommand{\grad}{\nabla}
\newcommand\Torus{{\mathbb T}}
\newcommand\Real{{\mathbb R}}
\newcommand\Integers{{\mathbb Z}}
\newcommand{\dss}{\displaystyle}
\newcommand{\cF}{\mathcal{F}}
\title{Suppression of blow-up in Patlak-Keller-Segel via shear flows}
\date{\today}
\author{Jacob  Bedrossian\footnote{\textit{jacob@cscamm.umd.edu}, University of Maryland, College Park.
Partially funded by NSF grant DMS-1462029 and an Alfred P. Sloan Research Fellowship}, \, Siming He \footnote{\textit{simhe@math.umd.edu}, University of Maryland, College Park. Funded by Patrick and Marguerite Sung Fellowship and ONR grant N000141512094.}}
\begin{document}
\maketitle

\begin{abstract}
In this paper we consider the parabolic-elliptic Patlak-Keller-Segel models in $\Torus^d$ with $d=2,3$ with the additional effect of advection by a large shear flow.
Without the shear flow, the model is $L^1$ critical in two dimensions with critical mass $8\pi$; solutions with mass less than $8\pi$ are global and solutions with mass larger than $8 \pi$ with finite second moment, all blow up in finite time. In three dimensions, the model is $L^{3/2}$ critical and $L^1$ supercritical; there exists solutions with arbitrarily small mass which blow up in finite time arbitrarily fast.
We show that the additional shear flow, if it is chosen sufficiently large, suppresses one dimension of the dynamics and hence can suppress blow-up.
In two dimensions, the problem becomes effectively $L^1$ subcritical and so all solutions are global in time (if the shear flow is chosen large).
In three dimensions, the problem is effectively $L^1$ critical, and solutions with mass less than $8\pi$ are global in time (and for all mass larger than $8\pi$, there exists solutions which blow up in finite time).
\end{abstract}

\setcounter{tocdepth}{1}
{\small\tableofcontents}

\section{Introduction}
Consider the parabolic-elliptic Patlak-Keller-Segel model in $\Torus^d$ with the additional effect of a large shear flow
\begin{equation}\label{def:PKS}
  \left\{
\begin{array}{l} \dss
\partial_t n + Au\partial_x n + \grad \cdot (n \grad c) = \Delta n \\
-\Delta c = n - \bar{n} \\
n(t=0,x,y) = n_{\mbox{{\scriptsize in}}}(x,y),
\end{array}
\right.
\end{equation}
where $\bar{n}$ denotes the average of $n$. If $d = 3$, then we denote $y = (y_1,y_2)$. 
Here, $u = u(y)$ if $d=2$ and $u=u(y_1)$ if $d=3$, is a fixed, $C^3$ function with at most finitely many non-degenerate critical points.
In the case $A = 0$, this system is one of the fundamental models for the study of aggregation via chemotaxis of certain microorganisms; see e.g. \cite{Patlak,KS,Hortsmann,HandP}.
The quantity $n$ denotes the density of microorganisms, which are executing a random walk with a bias up the gradient of the \emph{chemo-attractant} $c$.
 The second equation describes the quasi-static equilibriation and production of the chemo-attractant by the microorganisms.
Patlak-Keller-Segel and its variations have received considerable mathematical attention over the years, for example, see the review \cite{Hortsmann} or some of the representative works \cite{ChildressPercus81,JagerLuckhaus92,Nagai95,Biler95,HerreroVelazquez96,Biler06,BlanchetEJDE06,Blanchet08,BlanchetCalvezCarrillo08,BlanchetCarlenCarrillo10} and the references therein.
The case $A \neq 0$ models the microorganisms suspended in a shear flow: 
the elliptic equation $-\Delta c = n - \bar{n}$ arises as the formal limit as $\epsilon \rightarrow 0$ of the advection-diffusion equation
\begin{align*}
\partial_t c + A \partial_x c = \epsilon^{-1}\left(\Delta c + n\right),
\end{align*}
under the assumption that $\epsilon A \ll 1$. In particular, \eqref{def:PKS} requires that the time-scale of equilibriation of $c$ is faster than the transport due to the shear flow.

One of the most well-known features of \eqref{def:PKS} is that it is $L^1$ critical in two dimensions and is $L^{3/2}$ critical in three dimensions.
For any reasonable notion of solution, 
 the $L^1$ norm of the density is conserved, $M := \norm{n(t)}_{L^1} = \norm{n_{in}}_{L^1}$ (for \eqref{def:PKS}, this is the \emph{mass}).
There is also the \emph{free energy}, for which \eqref{def:PKS} (with $A = 0$) is formally a gradient flow with respect to the $L^2$ Wasserstein metric:
\begin{align}
\mathcal{F}[n(t)] = \int n \log n dx - \frac{1}{2}\int \abs{\grad c}^2 dx \leq \mathcal{F}[n_{in}]. \label{def:F}
\end{align}
In $\Real^2$, the conservation of mass and dissipation of the free energy (the latter is a logarithmically subcritical quantity) can be used to prove that if
$\norm{n_{in}}_{L^1} \leq 8\pi$, then solutions exist for all time (see e.g. \cite{BlanchetEJDE06,Blanchet08,BlanchetCarlenCarrillo10}).
All solutions with finite second moment and $\norm{n_{in}}_{L^1} > 8\pi$ are known to blow up in finite time \cite{JagerLuckhaus92,Nagai95,BlanchetEJDE06}.
On $\Torus^2$, some similar results are known \cite{Senba02}.
On $\Real^3$, since \eqref{def:PKS} is super-critical with respect to the controlled quantities, significantly less is understood.
Solutions which are initially small in $L^{3/2}$ are known to exist globally and it is known that there exists blow-up solutions with arbitrarily small mass \cite{Corrias04}.

In \cite{KiselevXu15} it was shown that if, instead of a shear flow, one has $A\mathbf{u} \cdot \grad n$ where $\mathbf{u}$ is \emph{relaxation-enhancing} -- a generalization of weakly mixing introduced in \cite{CKRZ08} -- then for each smooth initial datum, one can choose $A$ large enough so that the solution to \eqref{def:PKS} does not blow-up in finite time.
Such velocity fields are very good mixers, and this ensures that any non-constant density configuration undergoes a large growth of gradients, and hence a large dissipation. The effect at work is then an \emph{enhanced dissipation}.
This effect has been studied previously in a variety of contexts, such as \cite{CKRZ08,Zlatos2010,BeckWayne11,VukadinovicEtAl2015,BCZGH15,BCZ15}, in the physics literature \cite{Lundgren82,RhinesYoung83,DubrulleNazarenko94,LatiniBernoff01,BernoffLingevitch94}, and in control theory \cite{BZ09,Beauchard2014}; a closely related effect was also studied in \cite{GallagherGallayNier2009}.

Mixing due to a shear flow is quite different from that due to a relaxation-enhancing or weakly mixing flow. In particular, data which is independent of $x$ does not mix at all, and so one must separate the evolution of the zero (or low if $x \in \Real$) frequencies in $x$ from the non-zero frequencies, which is the decomposition into the nullspace of the transport operator and its orthogonal complement.
Enhanced dissipation due to shear flow was shown in \cite{BMV14,BGM15I,BGM15II,BGM15III,BVW16} to be important for understanding the stability of the Couette flow in the 2D and 3D Navier-Stokes equations at high Reynolds number. For example, \cite{BGM15I,BGM15II,BGM15III} show that the enhanced dissipation can suppress 3D effects and simplify the dynamics to be essentially 2D.
It is intuitive then to expect that a large shear flow can also in some sense suppress one dimension in \eqref{def:PKS} and hence make 2D $L^1$ subcritical and 3D $L^1$ critical.
This is essentially what we prove for $u \in C^3$ with finitely many non-degenerate critical points (the relevance of these hypotheses are discussed after the statements).

\begin{theorem} \label{thm:2D}
Let $u \in C^3(\Torus)$ have finitely many, non-degenerate critical points and let $n_{in} \in H^1(\Torus^2) \cap L^\infty(\Torus^2)$ be arbitrary. There exists an $A_0 = A_0(u,\norm{n_{in}}_{H^1},\norm{n_{in}}_{L^\infty})$ such that if $A > A_0$ then the solution to \eqref{def:PKS} is global in time.
\end{theorem}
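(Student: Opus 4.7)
The plan is to decompose $n=\bar n+n_0(t,y)+n_{\neq}(t,x,y)$ into its total mean, its $x$-average $n_0$, and the zero-$x$-mean fluctuation $n_{\neq}$, with the corresponding splitting $c=c_0+c_{\neq}$ where $-\partial_{yy}c_0=n_0$ and $-\Delta c_{\neq}=n_{\neq}$. The shear transport $Au\partial_x$ acts only on the fluctuation. The strategy rests on two inputs: (i) an \emph{enhanced dissipation} estimate for the passive-scalar semigroup $e^{t(\Delta-Au\partial_x)}$ restricted to the non-zero $x$-modes, which for $u\in C^3(\Torus)$ with finitely many non-degenerate critical points decays at a rate $\lambda(A)\to\infty$ as $A\to\infty$; and (ii) the fact that, after projecting onto the zero mode, $n_0$ is governed by a $1$D PKS-type equation, which is strongly sub-critical (the diffusion beats the nonlinearity by a large scaling margin) and hence admits global solutions as long as the forcing coming from the fluctuation is controlled.

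I would carry this out through a bootstrap argument. On a maximal interval $[0,T]$ one assumes smallness hypotheses of the form
\[
\sup_{t\in[0,T]}\|n_{\neq}(t)\|_{L^2}^2+\int_0^T\|\grad n_{\neq}(t)\|_{L^2}^2\,dt\leq K(A),
\]
with $K(A)\to 0$ as $A\to\infty$, together with subcritical a priori bounds on $n_0$ (e.g.\ $\|n_0\|_{L^\infty_t L^\infty_y}$). Via Duhamel and the enhanced dissipation estimate, $\|n_{\neq}(t)\|_{L^2}$ is bounded by $\|n_{\neq}(0)\|_{L^2}\,e^{-\lambda(A)t}$ plus nonlinear contributions coming from the three couplings $\grad\cdot(n_0\grad c_{\neq})$, $\grad\cdot(n_{\neq}\grad c_0)$, and the non-zero-mode projection of $\grad\cdot(n_{\neq}\grad c_{\neq})$. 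Each is controlled via elliptic regularity for $c_0,c_{\neq}$, product inequalities (Gagliardo--Nirenberg, Sobolev), and the bootstrap hypotheses themselves, and for $A$ large enough the enhanced dissipation dominates, improving the bounds and closing the bootstrap.

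With $n_{\neq}$ controlled and of size vanishing in $A$, the zero-mode equation reduces to a $1$D parabolic PKS-type problem on $\Torus$ with a small forcing built from $\brak{n_{\neq}\partial_y c_{\neq}}_x$. Standard $1$D parabolic machinery -- energy estimates combined with maximum-principle or De Giorgi type arguments -- then yields a uniform $L^\infty$ bound on $n_0$ depending only on the initial data. Combining these two estimates produces $\|n\|_{L^\infty_t L^\infty_{x,y}}<\infty$, which is more than enough to rule out blow-up in \eqref{def:PKS}.

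The principal obstacle is the mixed coupling $\grad\cdot(n_0\grad c_{\neq})$ in the $n_{\neq}$ equation: $n_0$ is only bounded (order one, independent of $A$), whereas the enhanced dissipation only supplies smallness for $n_{\neq}$. The decisive point is to design the bootstrap so that the gain from enhanced dissipation -- effectively a power of $A^{-1/2}$ in an appropriate space-time norm -- is spent precisely to absorb this term, trading regularity for smallness through the two-derivative elliptic gain in passing from $n_{\neq}$ to $\grad c_{\neq}$. Matching derivative counts against powers of $A$ in the bootstrap norms is the central technical issue; once this is done, the remainder of the argument is essentially standard parabolic estimates and the soft $1$D theory.
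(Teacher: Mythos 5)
Your overall strategy matches the paper's: decompose into $x$-average $n_0$ and fluctuation $n_{\neq}$, observe that the zero-mode equation is a $1$D (hence strongly subcritical) Patlak-Keller-Segel problem, use enhanced dissipation to control $n_{\neq}$, and couple everything with a bootstrap. The conceptual identification of \emph{why} $A\to\infty$ globalizes 2D PKS is exactly right.

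The main divergence is technical but significant. You propose to control $n_{\neq}$ via Duhamel using the \emph{linear} enhanced-dissipation semigroup bound of Bedrossian--Coti Zelati (an $L^2\to L^2$ decay estimate). The paper instead runs \emph{nonlinear hypocoercivity}: it differentiates the weighted functional
\[
\Phi[n]=\|n_{\neq}\|_2^2+\|\sqrt{\alpha}\,\partial_y n_{\neq}\|_2^2+2\langle \beta u'\partial_x n_{\neq},\partial_y n_{\neq}\rangle+\|\sqrt{\gamma}u'\,\partial_x n_{\neq}\|_2^2
\]
along the \emph{nonlinear} flow and absorbs all Keller--Segel nonlinearities into the dissipative terms of $\tfrac{d}{dt}\Phi$. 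The difference matters: the nonlinear forcing for $n_{\neq}$ is in divergence form (e.g.\ $\grad\cdot(n_0\grad c_{\neq})$, $\grad\cdot(\grad c_0\,n_{\neq})$), so closing Duhamel requires \emph{smoothing} estimates of the type $\|e^{\tau L}\grad\cdot\|_{L^2\to L^2}\lesssim g(\tau)$ for the shear--diffusion operator $L$ restricted to nonzero modes, with $g$ integrable and compatible with the enhanced rate. For the Couette flow $u(y)=y$ on $\Real$ these follow from an explicit Fourier representation, which is precisely why the paper remarks that the Fourier/Duhamel route works \emph{only} in that special case; for general $u$ with finitely many non-degenerate critical points, such smoothing estimates are not established by the cited hypocoercivity results and are nontrivial to extract. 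The paper's approach of estimating $\tfrac{d}{dt}\Phi$ directly sidesteps this entirely by having the dissipative negative terms (not a semigroup) absorb the gradients. This is exactly the ``derivative-count vs.\ powers of $A$'' matching you correctly flag as the central difficulty, but Duhamel makes that matching harder, not easier, for general shears.

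Two smaller points. First, the quantitative statement that $\sup_t\|n_{\neq}\|_{L^2}^2+\int_0^T\|\grad n_{\neq}\|_{L^2}^2\,dt\le K(A)\to 0$ is not right as stated: the space-time gradient integral is set by energy conservation and is $O(1)$ in $A$ (in the paper it appears weighted as $\tfrac{1}{A}\int\|\grad n_{\neq}\|_2^2\,dt\lesssim\|n_{in}\|_2^2$ after time rescaling). What actually vanishes in $A$ is the time integral of $\|n_{\neq}\|_{L^2}^2$ itself, of order $A^{-1/2}\log A$, and this is the quantity the paper exploits when estimating the forcing of the zero mode. Second, the bootstrap needs an $L^\infty$ bound on all of $n$ (not just $n_0$): the fluctuation terms like $\|n_{\neq}\|_\infty$, $\|\grad c_{\neq}\|_\infty$ must be controlled inside the enhanced dissipation estimate itself, so the $L^\infty$ control is one of the four coupled bootstrap hypotheses rather than something derived afterwards. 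Both of these are fixable details that do not affect your architecture, but they would need repair before the argument closes.
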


\begin{remark}
Theorem \ref{thm:2D} extends to the cylindrical domain $\Torus \times \Real$ provided $u'$ is bounded uniformly away from zero near $y \rightarrow \pm \infty$.
\end{remark}

It is clear that Theorem \ref{thm:2D} cannot hold in 3D. Indeed, consider any solution to the 3D problem which is constant in the $x$ direction: $n(t,x,y_1,y_2) = n(t,y_1,y_2)$.
This solution will solve \eqref{def:PKS} on $\Torus^2$ with $A = 0$ and hence the $8\pi$ critical mass will still apply.
Our next result shows that for $A$ large the third dimension is suppressed and $8\pi$ is indeed the critical mass for \eqref{def:PKS} in $\Torus\times \rr^2$ and $\Torus^3$. 
As this setting is effectively critical, Theorem \ref{thm:3D} is harder to prove than Theorem \ref{thm:2D} (which is effectively subcritical, as \cite{KiselevXu15}).

\begin{theorem} \label{thm:3D}
\begin{itemize}
\item[(a)] Let $u \in C^3(\Torus)$ have finitely many, non-degenerate critical points and let $n_{\mbox{{\scriptsize in}}} \in H^1(\Torus^3) \cap L^\infty(\Torus^3)$ be arbitrary such that
$\norm{n_{\mbox{{\scriptsize in}}}}_{L^1} < 8\pi$ and for some $q > 0$, there holds $n_{\mbox{{\scriptsize in}}}(x) \geq q>0$ for all $x \in \Torus^3$.
Then there exists an $A_0 = A_0(u,\norm{n_{\mbox{{\scriptsize in}}}}_{H^1},\norm{n_{\mbox{{\scriptsize in}}}}_{L^\infty},\norm{n_{in}}_{L^1},q)$ such that if $A > A_0$ then the solution to \eqref{def:PKS} is global in time.
\item[(b)] Suppose $u \in C^3(\rr)$ have finitely many, non-degenerate critical points and $u'$ is bounded uniformly away from zero near infinity. Let $n_{in} \in H^1(\Torus\times \rr^2) \cap L^\infty(\Torus\times \rr^2)$ be arbitrary such that $\norm{n_{in}}_{L^1} < 8\pi$ and $I[n_{in}] := \int n_{in}(x,y) \abs{y}^2 dx dy < \infty$.
Consider the problem 
\bel\label{KSC3intro}
\left\{\begin{array}{rrr}\pa_t n+ Au(y_1)\pa_x n+ \na\cdot(\na c n)= \de n,\\
-\de c=n,\\
n(\cdot,0)=n_0.\end{array}\right.
\eel
Then, there exists an $A_0 =  A_0( u, \norm{n_{in}}_{L^\infty},\norm{n_{\mbox{{\scriptsize in}}}}_{H^1},\norm{n_{\mbox{{\scriptsize in}}}}_{L^1},I[n_{in}])$, such that if $A > A_0$ then the solution is global in time.
\end{itemize}
\end{theorem}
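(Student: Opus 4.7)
The plan is to decompose the density and chemo-attractant into their $x$-averages and $x$-fluctuations: write $n = \langle n\rangle_x + n_{\neq}$ with $\langle n\rangle_x(t,y) = \frac{1}{2\pi}\int_{\Torus} n(t,x,y)\,dx$, and analogously $c = \langle c\rangle_x + c_{\neq}$. Averaging \eqref{def:PKS} over $x$ annihilates the shear term and yields, in the appropriate 2D variables $(y_1,y_2)$, a 2D Patlak--Keller--Segel equation for $\langle n\rangle_x$ with a quadratic source from the fluctuations,
\begin{equation*}
\partial_t \langle n\rangle_x + \nabla_y\cdot\bigl(\langle n\rangle_x \nabla_y \langle c\rangle_x\bigr) = \Delta_y\langle n\rangle_x - \nabla_y\cdot\langle n_{\neq}\nabla_y c_{\neq}\rangle_x,
\end{equation*}
coupled to the 2D elliptic problem for $\langle c\rangle_x$. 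Since $\norm{\langle n\rangle_x(t)}_{L^1}=\norm{n_{in}}_{L^1}<8\pi$, if $n_{\neq}$ can be shown to remain small in a suitable norm then $\langle n\rangle_x$ is effectively a 2D PKS below the critical mass, for which the classical subcritical theory (free energy and log-HLS) gives global regularity.

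The second pillar is the shear-enhanced dissipation on $n_{\neq}$. The fluctuation satisfies
\begin{equation*}
\partial_t n_{\neq} + Au(y_1)\partial_x n_{\neq} - \Delta n_{\neq} = -\nabla\cdot(n\nabla c) + \nabla_y\cdot\langle n_{\neq}\nabla_y c_{\neq}\rangle_x,
\end{equation*}
and the hypothesis that $u\in C^3$ has only non-degenerate critical points (plus $u'$ bounded away from zero at infinity in (b)) yields a semigroup estimate $\norm{e^{tL_A}P_{\neq}f}_{L^2}\leq Ce^{-\lambda_A t}\norm{f}_{L^2}$ with $\lambda_A\to\infty$ as $A\to\infty$ (heuristically $\lambda_A\sim A^{1/2}$ for shears with quadratic tangencies), obtained by hypocoercivity on each nonzero $x$-Fourier mode, patching the behaviour near critical points with plain shear dissipation away from them. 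This is the main linear input.

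With these two ingredients I would run a continuation/bootstrap on the quantities
\begin{equation*}
\mathcal{X}(t):=\norm{n_{\neq}(t)}_{L^2}^2, \qquad \mathcal{Y}(t):=\norm{\langle n\rangle_x(t)}_{L^p} \text{ for suitable } p,
\end{equation*}
together with a free-energy bound for $\langle n\rangle_x$. On a maximal interval of validity, I would: (i) close the bound on $\mathcal{X}$ by pairing with $n_{\neq}$ and absorbing the nonlinear terms $\nabla\cdot(\langle n\rangle_x\nabla c_{\neq})$, $\nabla\cdot(n_{\neq}\nabla\langle c\rangle_x)$, and $\nabla\cdot(n_{\neq}\nabla c_{\neq})$ into $\lambda_A\norm{n_{\neq}}_{L^2}^2$ via Gagliardo--Nirenberg and parabolic smoothing, which forces $A_0$ to be large relative to $\norm{n_{in}}_{H^1},\norm{n_{in}}_{L^\infty}$ (needed to control the short-time transient $t\lesssim\lambda_A^{-1}$); and (ii) close $\mathcal{Y}$ and the free-energy bound via the 2D subcritical PKS theory, using log-HLS on $\Torus^2$ in (a) and on $\rr^2$ in (b), treating $\nabla_y\cdot\langle n_{\neq}\nabla_y c_{\neq}\rangle_x$ as a small forcing controlled by $\mathcal{X}$. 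In (a) the positivity $n_{in}\geq q>0$ is propagated by the maximum principle to $\langle n\rangle_x\geq q$, which makes $\int\langle n\rangle_x\log(\langle n\rangle_x/q)\geq 0$ and stabilizes the entropy argument (hence the $q$-dependence of $A_0$); in (b), the finite second moment $I[n_{in}]$ together with a virial-type identity plays the analogous role of preventing mass escape. $L^p$ and $L^\infty$ upgrades would then come from Moser iteration, closing the continuation criterion.

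The main obstacle is the nonlinear coupling in the fluctuation equation, specifically the pieces $\nabla\cdot(\langle n\rangle_x\nabla c_{\neq})$ and $\nabla\cdot(n_{\neq}\nabla\langle c\rangle_x)$, which are only linear in $n_{\neq}$ and have no a priori small coefficient. Absorbing them into the enhanced dissipation requires converting $L^2$-smallness of $n_{\neq}$ into higher-norm control at the expense of powers of $A$, and must be done without leaving any slack---because the effective 2D problem is \emph{critical} (this is precisely what distinguishes Theorem \ref{thm:3D} from the effectively subcritical Theorem \ref{thm:2D}), the $M<8\pi$ threshold in the free-energy/log-HLS estimate cannot be degraded by the source term. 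In part (b) there is the further complication that $\rr^2$ is unbounded, so the virial/moment estimate must be propagated in tandem with the bootstrap, and the uniform-in-$y_1$ version of the enhanced dissipation (which relies on $|u'|$ staying bounded below near infinity) must be established before the continuation argument can even be started.
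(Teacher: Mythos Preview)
Your overall architecture---split $n=n_0+n_{\neq}$, hypocoercivity-based enhanced dissipation on $n_{\neq}$, and a 2D free-energy/log-HLS argument on $n_0$ treated as a forced subcritical PKS---is exactly the paper's strategy, and your identification of the criticality of the zero-mode problem as the core difficulty is on the mark.

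There are, however, two places where your sketch diverges from what actually works. First, in (a) the maximum principle does \emph{not} propagate $n\geq q$; it only gives the decaying bound $n_{\min}(t)\geq q\,e^{-\overline{n}t/A}$ (in the rescaled time). The role of this bound is not to make $\int n_0\log(n_0/q)\geq 0$, but rather to control $\norm{1/n_0}_{L^\infty}$ in the source term that appears when you differentiate the 2D free energy $\mathcal F[n_0]$:
\[
\frac{d}{dt}\mathcal F[n_0]\leq -\frac{1}{2A}\int n_0|\nabla_y\log n_0-\nabla_y c_0|^2\,dy+\frac{1}{2A}\int\frac{|(\nabla_y c_{\neq}n_{\neq})_0|^2}{n_0}\,dy.
\]
The exponential growth $e^{\overline{n}t/A}$ of $\norm{1/n_0}_\infty$ is beaten by the faster decay $e^{-ct/(A^{1/2}\log A)}$ of $\norm{n_{\neq}}_2^2$, so the time integral of the source is small for large $A$. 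This is a minor correction.

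The substantive gap is in (b). You attribute the role of the second moment $I[n_{in}]$ to a ``virial-type identity\ldots preventing mass escape,'' but that is not the obstruction. The real difficulty on $\Torus\times\rr^2$ is that $n_0$ necessarily tends to zero at infinity, so $\norm{1/n_0}_\infty=\infty$ and the argument above for the free-energy source term collapses entirely. A virial computation does nothing to fix this. The paper's resolution is to replace $\mathcal F$ by an \emph{approximate} free energy
\[
\mathcal F_\Gamma[n_0]=\int n_0\Gamma(n_0)-\tfrac12 n_0 c_0\,dy,\qquad \Gamma(s)=\begin{cases}\log s,&s\geq 1,\\ (s-1)-\tfrac12(s-1)^2,&s<1,\end{cases}
\]
so that $\Gamma$ matches $\log$ at large density but is bounded below. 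Showing $\frac{d}{dt}\mathcal F_\Gamma[n_0]$ is controllable requires a nontrivial computation: one must verify that the ``self-interaction'' piece $T_0$ is nonpositive, which in turn reduces (after completing a square) to proving $\int_{\{n_0<1\}}\nabla_y n_0\cdot\nabla_y c_0\,dy\leq 0$; this uses Sard's theorem to find regular sublevel sets, the divergence theorem, and $-\Delta c_0=n_0$. The second-moment hypothesis enters only to make the log-HLS inequality on $\rr^2$ applicable (one needs $n_0\log(1+|y|^2)\in L^1$). Without the $\mathcal F_\Gamma$ construction, your bootstrap in (b) cannot close.
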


\begin{remark}
It is not clear whether or not one could expect Theorem \ref{thm:3D} to hold also in the case $\norm{n_{\mbox{{\scriptsize in}}}}_{L^1} = 8\pi$ as in $\Real^2$ \cite{Blanchet08}.
\end{remark}

Let us now briefly discuss the proofs of Theorems \ref{thm:2D} and \ref{thm:3D}.
By re-scaling time $t \mapsto A^{-1} t$, the system \eqref{def:PKS} is equivalent to
\begin{equation}\label{def:PKSres}
  \left\{
\begin{array}{l} \dss
\partial_t n + u\partial_x n + \frac{1}{A}\left(\grad \cdot (n \grad c) - \Delta n\right) = 0 \\
-\Delta c = n - \bar{n} \\
n(t=0,x,y) = n_{\mbox{{\scriptsize in}}}(x,y),
\end{array}
\right.
\end{equation}
For our purposes, it is convenient to use the form \eqref{def:PKSres}.
In \cite{BCZ15}, enhanced dissipation was studied for the passive scalar equation
\begin{align}
\partial_t f + u\partial_x f = \frac{1}{A}\Delta f. \label{def:passcale}
\end{align}
Among other things, it was shown in \cite{BCZ15} that for $u$ satisfying the hypotheses of Theorems \ref{thm:2D} and \ref{thm:3D}, there exists some $\tilde{\epsilon} > 0$ such that
\begin{align*}
\norm{f(t) - \frac{1}{2\pi}\int_{\Torus} f(t,x,\cdot) dx}_{L^2} \lesssim e^{-\frac{\tilde{\epsilon}\nu^{1/2}}{1 + \abs{\log \nu}^2}t}\norm{f(0)}_{L^2}.
\end{align*}
The technique employed in \cite{BCZ15} is an energy method known as hypocoercivity, see e.g. the text \cite{villani2009} for an overview or \cite{DesvillettesVillani01,DMS15,HerauNier2004,Herau2007,GallagherGallayNier2009} and the references therein.
In the proof of Theorem \ref{thm:2D} we will couple such hypocoercivity energy estimates to $H^1$ energy estimates for the zero-in-$x$ frequency as well as to $L^p$ estimates on \eqref{def:PKS}, similar to the estimates in \cite{JagerLuckhaus92,BlanchetEJDE06,CalvezCarrillo06}, which do not see the advection term.
In the proof of Theorem \ref{thm:3D}, the $x$-independent system is now formally $L^1$ critical, and hence in order to get results for mass up to $8\pi$, we need to employ the free energy in a manner similar to \cite{BlanchetEJDE06}.
However, the \emph{two-dimensional} free energy is not a monotonically dissipated quantity for \eqref{def:PKS}, and hence we need to also couple an estimate on the 2D free energy to the other energy estimates we make and control the errors using the enhanced dissipation. 
This is particularly tricky if one is interested in the result on $\Torus \times \Real^2$.
Enhanced dissipation (or something similar) was studied via hypocoercivity also in \cite{GallagherGallayNier2009,BeckWayne11,BCZ15}, however, to the authors' knowledge, this is the first work that uses hypocoercivity to obtain enhanced dissipation estimates for nonlinear problems.
We remark that the Fourier analysis methods used in \cite{BGM15III,BVW16} also apply to \eqref{def:PKS} in the specific case $u(y) = y$ and $y \in \Real$.
This approach is much simpler than the hypocoercivity methods we employ, however, the hypocoercivity methods allow us to study a much wider variety of shear flows.

\subsection{Notations}
\subsubsection{Miscellaneous}
The constants $B$ below are universal constants which have no dependence on any quantities, except perhaps $u$ and $M$.
 On the contrary, the dependence of the constants $C_{...}$ on various quantities involving $n_{in}$ is more important and will be made a little more explicit. 
Given quantities $X,Y$, if there exists a constant $B$ such that $X \leq BY$, we often write $X\lesssim Y$. 
We will moreover use the notation $\lan x\ran:=(1+|x|^2)^{1/2}$.

\subsubsection{Fourier Analysis}
Most of the time, we consider the Fourier transform \emph{only in the $x$ variable}, and denoting it and its inverse as
\begin{equation*}
\wh{f}_k(y) := \frac{1}{2\pi}\int_{-\pi}^\pi e^{-ikx} f(x,y) dx , \quad \check{g}(x,y) = \sum_{k=-\infty}^\infty g_k(y) e^{ikx}.  
\end{equation*}
Define the following orthogonal projections:
\begin{align*}
f_0(t,y) &= \frac{1}{2\pi}\int_{-\pi}^\pi f(t,x,y) dx ,\\
f_{\neq}(t,x,y)& = f(t,x,y) - f_0(t,y),
\end{align*}
for ``zero frequency'' and ``non-zero frequency''.
For any measurable function $m(\xi)$, we define the Fourier multiplier $m(\grad)f := (m(\xi)\hat{f}(\xi))^{\vee}$.
\subsubsection{Functional spaces}
The norm for the $L^p$ space is denoted as $||\cdot||_p$ or $||\cdot||_{L^p(\cdot)}$:
\begin{align*}
||f||_p=||f||_{L^p}=(\int |f|^p dx)^{1/p},
\end{align*} with natural adjustment when $p$ is $\infty$.
If we need to emphasize the ambient space, we use the second notation, i.e., $||n_{\neq}||_{L^p(\Torus\times\rr^2)}$. Otherwise, we use the first notation for the sake of simplicity. The Sobolev norm $||\cdot||_{H^s}$ is defined as follow:
\begin{align*}
||f||_{H^s}:=||\lan \grad \ran^s f||_{L^2}.
\end{align*}
For a function of space and time $f = f(t, x)$, we use the following space-time norms:
\begin{align*}
||f||_{L_t^pL_x^q}:=&||||f||_{L_x^q}||_{L_t^p},\\
||f||_{L_t^p H_x^s}:=&||||f||_{H^s_x}||_{L_t^p}.
\end{align*}

\section{Proof of Theorem \ref{thm:2D}} \label{sec:2D}
\subsection{Outline of the proof}
In this section, we prove Theorem \ref{thm:2D}.
The enhanced dissipation does not act in the nullspace of the advection term, and hence it is reasonable to decompose the solution as follows
\bel\label{zeromode}\ba
\pa_t n_0+\frac{1}{A}\pa_y(\pa_y c_0 n_0)+\frac{1}{A}(\na\cdot(\na c_{\neq} n_{\neq}))_0=\frac{1}{A}\pa_{yy} n_0,\\
-\de c_0=n_0-\overline{n};
\ea\eel
and,
\bel\label{nonzeromode}\ba
\pa_t n_{\neq} +u(y)\pa_x n_{\neq} -\frac{1}{A}(n_0-\overline{n})n_{\neq}&-\frac{1}{A}n_0n_{\neq}+\frac{1}{A}\na c_0\cdot \na n_{\neq}+\frac{1}{A}\na c_{\neq}\cdot \na n_0 \\
&=-\frac{1}{A}(\na\cdot(\na c_{\neq} n_{\neq}))_{\neq} + \frac{1}{A}\de n_{\neq},\\
-\de c_{\neq}&=n_{\neq}.
\ea\eel
As in \cite{BCZ15}, it is convenient to consider \eqref{nonzeromode} after applying the Fourier transform \emph{only in $x$}.
Applying to both sides of (\ref{nonzeromode}) we have,
\bel\label{nonzeromodeF}\ba
\pa_t \widehat{n}_{k}+NL_k+L_k+u(y)ik \widehat{n}_{k}=&\frac{1}{A}(\pa_{yy}-|k|^2) \widehat{n_{k}},\\
-(\pa_{yy}-|k|^2) c_{k}=&n_{k},
\ea\eel
where $L_k,NL_k$ are defined as follows:
\begin{subequations}
\begin{align}
NL_k:=&-\frac{1}{A}\sum_{\ell \neq 0} \widehat{n}_{k-\ell}(y)\widehat{n}_\ell(y)+\frac{1}{A}\sum_{\ell \neq 0} \pa_y \widehat{c}_{k-\ell}\pa_y \widehat{n}_\ell - \frac{1}{A}\sum_{\ell \neq 0}(k-\ell) \widehat{c}_{k-\ell} \ell \widehat{n}_\ell, \label{NL} \\
L_k:=&-\frac{1}{A}\overline{n}\widehat{n}_k-\frac{2}{A}({n}_0-\overline{n})\widehat{n}_{k}+\frac{1}{A}\pa_y c_0 \pa_y \widehat{n}_{k}+\frac{1}{A}\pa_y \widehat{c}_{k} \pa_y {n_0}. \label{L}
\end{align}
\end{subequations}
Here, the $L$ refers to ``linear with respect to the nonzero frequencies'' and $NL$ refers to ``nonlinear with respect to the nonzero frequencies''.

For constants $C_{ED},C_{L^2}, C_{\dot{H}^1},$ and $C_\infty$ determined by the proof, define $T_\star$ to be the end-point of the largest interval $[0,T_\star]$ such that the following hypotheses hold for all $T \leq T_\star$:

\begin{subequations}
(1) Nonzero mode $L_t^2 \dot{H}_{x,y}^1$ estimate:
\bel\ba\label{H1}
\frac{1}{A}\int_0^{T_\star}||\nabla n_{\neq}||_2^2dt\leq& 8||n_{in}||_2^2;\\
\ea\eel

(2) Nonzero mode enhanced dissipation estimate:
\bel\label{H2}\ba
||n_{\neq}(t)||_2^2\leq& 4C_{ED}||n_{in}||_{H^1}^2e^{-\frac{ct}{A^{1/2}\log A}}, \\
\ea\eel
where $c$ is a small constant depending only on $u$. 

(3) Uniform in time estimates on the zero mode:
\bel\ba\label{H3}
||n_0-\overline{n}||_{L^\infty_t(0,T_\star;L^2_{y})}\leq& 4C_{L^2},\\
||\pa_y n_0||_{L^\infty_t(0,T_\star;L^2_{y})}\leq& 4C_{\dot{H}^1};\\
\ea\eel

(4) $L^\infty$ estimate of the whole solution:
\bel\label{H4}\ba
||n||_{L^\infty_t(0,T_\star;L^\infty_{x,y})}\leq& 4C_\infty.
\ea\eel
\end{subequations}
Moreover, in order to simplify the exposition, we introduce the following constant:
\bel
\CC:=1+M+C_{ED}^{1/2}||n_{in}||_{H^1}+C_{L^2}+C_\infty.
\eel

\begin{remark}
In the above, $C_{ED}$ is first chosen depending only on $u$. Then, $C_{L^2}$ is chosen depending only on the initial data $n_{in}$ (and $C_{ED}$).
Then $C_{\infty}$ is chosen depending only on $n_{in}$, $C_{L^2}$, and $C_{ED}$. Finally, $C_{\dot{H}^1}$ depends on $n_{in}$, $C_{ED}$, $C_{L^2}$, and $C_\infty$.
Then, $A$ is chosen large depending on all of these parameters.
\end{remark}

We will refer to the hypotheses \eqref{H1}, \eqref{H2}, \eqref{H3}, and \eqref{H4} together as the \emph{bootstrap hypotheses}, denoted as \textbf{(H)}.
Notice that by local well-posedness of mild solutions, the quantities on the left-hand sides of \eqref{H1}, \eqref{H2}, \eqref{H3}, and \eqref{H4} take values continuously in time. Moreover, the inequalities are all satisfied with the $4$'s replaced by $2$'s for $t$ sufficiently small.
By the standard continuation criteria for \eqref{def:PKS}, the solution exists and remains smooth on an interval $(0,t_0]$, with $t_0 > T_\star$ such that $t_0 - T_\star$ can be taken to depend only on $\norm{n(T_\star)}_{L^2}$.
By continuity, the following proposition shows that the solution is global and satisfies the a priori estimates \textbf{(H)} for all time.

\begin{pro}\label{prop:boot2D}
For all $n_{in}$ and $u$, there exists an $A_0(u,\norm{n_{in}}_{H^1},\norm{n_{in}}_{L^\infty})$ such that if $A > A_0$ then the following conclusions, referred to as (\textbf{C}), hold on the interval $[0,T_\star]$:

\noindent
\begin{subequations}
(1) \bel\ba\label{ctrl:L2H1}
\frac{1}{A}\int_0^{T_\star}||\nabla_{x,y} {n}_{\neq}||_2^2dt\leq& 4||n_{in}||_2^2;\\
\ea\eel
(2) For all $t < T_\star$,
 \bel\label{ctrl:ED}\ba
||{n}_{\neq}(t)||_2^2\leq& 2C_{ED}||n_{in}||_{H^1}^2e^{-\frac{ct}{A^{1/2}\log A}}; \\
\ea\eel
(3) \bel\ba\label{ctrl:ZeroMd}
||{n}_0-\overline{n}||_{L^\infty_t(0,T_\star; L^2_{y})}\leq& 2C_{L^2},\\
||\pa_y {n}_0||_{L^\infty_t(0,T_\star;L^2_{y})}\leq& 2C_{\dot{H}^1};\\
\ea\eel
(4) \bel\label{ctrl:Linf}\ba
||n||_{L^\infty(0,T_\star;L^\infty_{x,y})}\leq& 2C_\infty.
\ea\eel
\end{subequations}
\end{pro}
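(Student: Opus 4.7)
The plan is to close the four bootstrap hypotheses \eqref{H1}--\eqref{H4} by showing that, with $A$ taken large, the factor $4$ on the right-hand sides can be improved to $2$. The central mechanism is that every nonlinear or coupling term in the decomposed system \eqref{zeromode}--\eqref{nonzeromode} carries a prefactor $\tfrac{1}{A}$, so any quantity that would otherwise threaten blow-up is rendered perturbative as long as the \emph{unperturbed} shear-flow dynamics produce enhanced dissipation on $n_\neq$ at rate $A^{-1/2}\log^{-1} A$, as established in \cite{BCZ15}. Conversions between the bootstrap controls will be made through standard interpolation (Gagliardo--Nirenberg) and product estimates, and the exponential enhanced-dissipation decay of $\|n_\neq\|_2$ is what keeps all error terms integrable in time.

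First, I would establish the $L^2_t\dot H^1_{x,y}$ bound \eqref{ctrl:L2H1}. Testing \eqref{nonzeromode} against $n_\neq$, the shear-flow transport term $u\partial_x n_\neq \cdot n_\neq$ vanishes, leaving $\tfrac{1}{2}\tfrac{d}{dt}\|n_\neq\|_2^2 + \tfrac{1}{A}\|\nabla n_\neq\|_2^2$ equal to a collection of cubic terms involving $n_0$, $c_0$, $n_\neq$, $c_\neq$, each with an explicit $\tfrac{1}{A}$. Using the bootstrap controls \eqref{H3}--\eqref{H4} to bound $n_0$, $\partial_y c_0$, and $\bar n$, together with elliptic regularity for $c_\neq$ (with a gain, since $c_\neq$ is defined with a strictly positive $|k|$ in Fourier), these nonlinear contributions are dominated by $\tfrac{C}{A}\|n_\neq\|_2^2 + \tfrac{C}{A}\|\nabla n_\neq\|_2\|n_\neq\|_2\,\CC$; absorbing the $\dot H^1$ piece into the dissipation and integrating in time gives \eqref{ctrl:L2H1} for $A$ large. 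Second, I would establish the enhanced dissipation \eqref{ctrl:ED} by viewing \eqref{nonzeromodeF} as a perturbation of the passive-scalar problem \eqref{def:passcale}, and applying the hypocoercivity functional from \cite{BCZ15} (a suitable weighted combination of $\|\hat n_k\|^2$, $\|\partial_y \hat n_k\|^2$, $\|\tfrac{1}{k}\partial_y\hat n_k\|^2$, etc.). The time derivative of this functional gains the rate $A^{-1/2}\log^{-1} A$ from the unperturbed linear dynamics; the $L_k$ and $NL_k$ contributions, each multiplied by $\tfrac{1}{A}$, are treated as perturbations using \eqref{H3}, \eqref{H4}, and product estimates for $NL_k$ that exploit the $L^\infty$ bound $\CC$. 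Since $A^{-1}\ll A^{-1/2}\log^{-1} A$, these perturbations are absorbed by the enhanced dissipation for $A$ sufficiently large.

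Third, I would close the zero-mode estimates \eqref{ctrl:ZeroMd}. The equation \eqref{zeromode} is a one-dimensional parabolic-elliptic PKS on $\Torus_y$, plus the source $\tfrac{1}{A}(\nabla\cdot(\nabla c_\neq n_\neq))_0$. In one spatial dimension, the PKS nonlinearity $\partial_y(\partial_y c_0\, n_0)$ is $L^1$-subcritical, so standard $L^2$ and $\dot H^1$ energy estimates close unconditionally (using Gagliardo--Nirenberg $\|n_0\|_\infty\lesssim \|\partial_y n_0\|_2^{1/2}\|n_0\|_2^{1/2}$ and Young's inequality). The forcing is controlled by $\tfrac{C}{A}\|n_\neq\|_2\|\nabla n_\neq\|_2$, which is time-integrable by \eqref{ctrl:L2H1}--\eqref{ctrl:ED}, and in fact exponentially small in $A$ for times $t\gtrsim A^{1/2}\log A$; this yields \eqref{ctrl:ZeroMd}. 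Finally, the $L^\infty$ bound \eqref{ctrl:Linf} is the main obstacle, since it is the only place where supercritical growth could manifest. I would run an $L^p$ energy estimate on $n$ itself for large $p$, multiplying \eqref{def:PKS} (in the rescaled form \eqref{def:PKSres}) by $n^{p-1}$: the advection term drops out, the dissipation yields $\tfrac{1}{A}\|\nabla n^{p/2}\|_2^2$, and the nonlinearity contributes $\tfrac{p-1}{A}\int n^{p+1}$. Splitting $n = n_0 + n_\neq$, the pure $n_0$ contribution is handled by the 1D subcriticality already used in step three, while the contributions from $n_\neq$ come with small $L^2$ factors from \eqref{ctrl:ED} and can be absorbed into the dissipation via Gagliardo--Nirenberg after paying the price of a power of $A$ that is beaten by the enhanced-dissipation exponential. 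Sending $p\to\infty$ and making the resulting constants uniform is the delicate step: one must track the $p$-dependence carefully so that the improvement $C_\infty \to 2C_\infty$ is genuine; this is where choosing $A_0$ last, depending on $C_\infty$ and $C_{\dot H^1}$, is essential.
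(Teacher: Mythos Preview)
Your overall strategy matches the paper's: close each of the four bootstrap estimates by energy methods, using the hypocoercivity functional of \cite{BCZ15} for the enhanced dissipation and treating all Keller--Segel contributions as $O(1/A)$ perturbations. Two points of comparison are worth making.

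First, for \eqref{ctrl:ED} you write that the hypocoercivity functional ``gains the rate $A^{-1/2}\log^{-1}A$ from the unperturbed linear dynamics.'' In the paper the functional $\Phi$ actually decays at the clean rate $A^{-1/2}$ (Proposition~\ref{C2pro}); the logarithmic loss arises because $\Phi$ is only equivalent to $\|n_\neq\|_2^2$ with $A$-dependent constants, namely $\Phi(0)\lesssim A^{1/2}\|n_{in}\|_{H^1}^2$. The paper therefore combines the hypocoercivity decay (good for $t\gtrsim A^{1/2}\log A$) with the short-time $L^2$ bound of Proposition~\ref{C2pro0} (good for $t\lesssim A$) to obtain \eqref{ctrl:ED}. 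Your sketch elides this two-regime splitting; also note the functional contains the cross term $\langle i\beta u'\hat n_k,\partial_y\hat n_k\rangle$ and the weighted term $|k|^2\|\sqrt{\gamma}u'\hat n_k\|_2^2$, which are the mechanism by which the shear enters---not just weighted $\partial_y$ norms.

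Second, you identify \eqref{ctrl:Linf} as ``the main obstacle'' and propose an $L^p$ iteration with a splitting $n=n_0+n_\neq$ and a further appeal to enhanced dissipation. The paper's treatment is a single line: once \eqref{H2} and \eqref{H3} give a uniform bound on $\|n\|_{L^\infty_tL^2_{x,y}}$, the $L^\infty$ bound follows immediately from the standard fact that $L^2$ is subcritical for 2D Patlak--Keller--Segel (the advection term $u\partial_x n$ drops out of every $L^p$ estimate), citing \cite{JagerLuckhaus92,Kowalczyk05,CalvezCarrillo06}. No splitting, no further use of enhanced dissipation, and no delicate $p$-tracking are needed at this step. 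Your route would work but is considerably more laborious than necessary.
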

The remainder of the section is dedicated to proving Proposition \ref{prop:boot2D}.

We first point out that there is a uniform upper bound on $\norm{n(t)}_{L^2}$ over the initial time layer $t \leq \delta A$ for a sufficiently small $\delta$ depending only on $\norm{n_{in}}_{L^2}$ (as such we can always choose $A > \delta^{-1}$).
This is an immediate consequence of the standard local existence theory of \eqref{def:PKS} via the time-rescaling used in \eqref{def:PKSres}, however, we include a brief sketch of the a priori estimate for completeness.
Proposition \ref{C2pro0} and standard higher regularity theory for \eqref{def:PKS} (see e.g. \cite{JagerLuckhaus92}) imply that \eqref{ctrl:Linf} holds over the time interval $0 \leq t \leq \delta A$.

\begin{pro}\label{C2pro0}
For all $n_{in}\in L^2(\Torus^2)$, there exists $\delta = \delta(\norm{n_{in}}_{L^2})$ sufficiently small such that for $t\leq \delta A$, the following estimate holds,
\bel\label{initial time layer estimate}
\norm{n_{\neq}(t)}_{L^2}^2 + \norm{n_0 - \overline{n}}_{L^2}^2 =  ||n(t) - \overline{n}||_2^2\leq 2||n_{in}-\overline{n}||_{L^2}^2 \leq 2\norm{n_{in}}_{L^2}^2.
\eel
\end{pro}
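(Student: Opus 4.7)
The plan is to work with the time-rescaled formulation \eqref{def:PKSres} and do a direct $L^2$ energy estimate on $n - \bar n$. The crucial structural observation is that the shear term $u(y)\partial_x$ contributes nothing to the $L^2$ estimate: since $u$ depends only on $y$, integration by parts in $x$ gives $\int u(y)(n-\bar n)\partial_x(n-\bar n)dxdy = \tfrac{1}{2}\int u(y)\partial_x(n-\bar n)^2 dxdy = 0$. Thus the time derivative of $\|n-\bar n\|_2^2$ is driven entirely by the $\tfrac{1}{A}$-small dissipation and chemotactic terms, and one simply needs to check that no short-time growth worse than $O(t/A)$ can occur.

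Multiplying the equation for $n-\bar n$ by itself, using $n\nabla n = \tfrac12\nabla n^2$, and using $-\Delta c = n-\bar n$, I obtain the identity
\begin{align*}
\tfrac{1}{2}\frac{d}{dt}\norm{n-\bar n}_2^2 + \tfrac{1}{A}\norm{\nabla n}_2^2 = \tfrac{1}{2A}\int (n-\bar n)n^2 dxdy.
\end{align*}
Expanding $n = (n-\bar n) + \bar n$ on the right gives $\int(n-\bar n)n^2 dxdy = \int(n-\bar n)^3 dxdy + 2\bar n\norm{n-\bar n}_2^2$ (the $\bar n^2$ term vanishes by mean zero of $n-\bar n$). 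The cubic is the only nontrivial term, and in two dimensions the Gagliardo--Nirenberg inequality for mean-zero functions gives $\norm{f}_3^3 \lesssim \norm{f}_2^2\norm{\nabla f}_2$, so $|\int(n-\bar n)^3| \leq C\norm{n-\bar n}_2^2 \norm{\nabla n}_2$. Young's inequality absorbs $\norm{\nabla n}_2$ into the dissipation, and (noting $\bar n \lesssim \norm{n_{in}}_{L^2}$ by Cauchy--Schwarz on the conserved mass on $\Torus^2$) one arrives at
\begin{align*}
\frac{d}{dt}\norm{n-\bar n}_2^2 \leq \frac{C}{A}\left(\norm{n-\bar n}_2^4 + \norm{n-\bar n}_2^2\right), \qquad C = C(\norm{n_{in}}_{L^2}).
\end{align*}

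A standard continuity argument then closes the estimate: so long as $\norm{n(t)-\bar n}_2^2 \leq 2\norm{n_{in}-\bar n}_2^2$, the right-hand side is bounded by some $K(\norm{n_{in}}_{L^2})/A$, so $\norm{n(t)-\bar n}_2^2 \leq \norm{n_{in}-\bar n}_2^2 + Kt/A$, and the bound is maintained for $t \leq \delta A$ with $\delta = \delta(\norm{n_{in}}_{L^2})$ small enough. The final inequality $\norm{n_{in}-\bar n}_2^2 \leq \norm{n_{in}}_2^2$ in \eqref{initial time layer estimate} is immediate from orthogonality of $n_{in}-\bar n$ and the constant $\bar n$.

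There is no genuine obstacle in this proposition since it is essentially the standard local $L^2$ theory for parabolic--elliptic Patlak--Keller--Segel; the only point requiring any structural input is the disappearance of the shear term in the energy identity, which is what permits the bound to be uniform in $A$ (after rescaling time by $A^{-1}$) and makes $\delta$ depend only on $\norm{n_{in}}_{L^2}$.
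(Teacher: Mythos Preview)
Your proof is correct and follows essentially the same approach as the paper: both derive the energy identity $\tfrac{1}{2}\frac{d}{dt}\|n-\bar n\|_2^2 = -\tfrac{1}{A}\|\nabla n\|_2^2 + \tfrac{1}{2A}\int(n-\bar n)n^2$, expand the cubic, apply the two-dimensional Gagliardo--Nirenberg inequality $\|f\|_3^3 \lesssim \|f\|_2^2\|\nabla f\|_2$, and absorb the gradient to reach an ODE of the form $\frac{d}{dt}\|n-\bar n\|_2^2 \lesssim \tfrac{1}{A}(\|n-\bar n\|_2^4 + M^2)$. The only cosmetic difference is that you close via a continuity argument while the paper integrates the Riccati-type inequality directly; your added remark that the shear term drops out of the $L^2$ estimate makes explicit what the paper leaves implicit.
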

\begin{proof}
The time derivative of the $L^2$ norm of $n-\overline{n}$ is estimated as follows, using a Gagliardo-Nirenberg-Sobolev inequality,
\be\ba
\frac{1}{2}\frac{d}{dt}||n-\overline{n}||_2^2=&-\frac{1}{A}||\na n||_2^2-\int \na\cdot(\na cn)(n-\overline{n})dx\\
=&-\frac{1}{A}||\na n||_2^2+\frac{1}{2A}||n-\overline{n}||_3^3+\frac{1}{A}\overline{n}||n-\overline{n}||_2^2\\
\leq &-\frac{1}{A}||\na n||_2^2+\frac{B}{A}||\na n||_2||n-\overline{n}||_2^2+\frac{1}{A}M||n-\overline{n}||_2^2\\
\lesssim& \frac{1}{A}||n-\overline{n}||_2^4+\frac{1}{A}M^2.
\ea\ee
The desired estimate follows (note that $M \lesssim \norm{n_{in}}_{L^2}$). 
\end{proof}

\subsection{Enhanced dissipation estimate, (\ref{ctrl:ED})} \label{sec:ED}
Proposition \ref{C2pro0} implies that \eqref{ctrl:ED} holds trivially on a time-scale like $t \lesssim A^{1/2}\log A$.
In order to deduce the enhanced dissipation effect for longer times, we use the hypocoercivity technique of \cite{BCZ15}, which builds on the earlier work of \cite{GallagherGallayNier2009,BeckWayne11}.
As outlined in \cite{villani2009}, hypocoercivity techinques are based on finding an energy which extracts the fact that the quadratic quantity $A^{-1}\norm{\grad f}^2_{L^2} + \norm{u' \partial_x f}_{L^2}^2$ is much `more coercive' than $A^{-1}\norm{\grad f}_{L^2}^2$.
In \cite{BCZ15} and here this is done via the following energies, defined $k$-by-$k$, 
\bel\label{Phi k}
\Phi_k[n(t)] =||\widehat{n}_k(t)||_2^2+||\sqrt{\al}\pa_y \widehat{n}_k(t)||_2^2+2kRe\lan i\beta u' \widehat{n}_k(t),\pa_y\widehat{n}_k
(t)\ran+|k|^2||\sqrt{\gamma}u' \widehat{n}_k(t)||_2^2;
\eel
\bel\label{Phi}
\Phi[n(t)]=\sum_{k\neq 0} \Phi_k[n(t)] =||n_{\neq}(t)||_{2}^2+||\sqrt{\al}\pa_y n_{\neq}(t)||_2^2 + 2\lan \beta u' \pa_xn_{\neq}(t),\pa_yn_{\neq}
(t)\ran+||\sqrt{\gamma}u'|\pa_x| n_{\neq}(t)||_2^2.
\eel
Here $\alpha,\beta$, and $\gamma$ are $k$-dependent constants (and hence should be interpreted as Fourier multipliers) satisfying
\begin{subequations} \label{def:abg}
\begin{align}
\alpha(A,k) & = \epsilon_\alpha A^{-1/2}\abs{k}^{-1/2} \\
\beta(A,k) & = \epsilon_\beta \abs{k}^{-1} \\
\gamma(A,k) & = \epsilon_\gamma A^{1/2}\abs{k}^{-3/2},
\end{align}
\end{subequations}
where $\epsilon_\alpha$, $\epsilon_\beta$, and $\epsilon_{\gamma}$ are small constants depending only on $u$ chosen in \cite{BCZ15}. Among other things, these are chosen such that 8$\beta^2 \leq \alpha \gamma$.  
Notice that in \cite{BCZ15} for treating general situations one must also take $\alpha,\beta$, and $\gamma$ to be $y$-dependent, however, as suggested by \cite{BeckWayne11}, this is not necessary to treat shear flows with non-degenerate critical points with $y \in \Torus$ or $y \in \Real$.
The parameters $\epsilon_\alpha$, $\epsilon_\beta$, and $\epsilon_{\gamma}$ are tuned such that,
\begin{align}
\Phi_k[n] \approx ||\widehat{n}_k||_2^2+||\sqrt{\al}\pa_y \widehat{n}_k||_2^2+|k|^2||\sqrt{\gamma}u'\widehat{n}_k||_2^2,
\end{align}
and hence
\begin{align}
\norm{\widehat{n}_k}_{2}^2 + A^{-1/2}\abs{k}^{-1/2}\norm{\partial_y \widehat{n}_k}_{2}^2 \lesssim \Phi_k[n] \lesssim \norm{\widehat{n}_k}_{2}^2 +  \abs{k}^{1/2} A^{1/2} \norm{\widehat{n}_k}_2^2 + A^{-1/2}\abs{k}^{-1/2}\norm{\partial_y \wh{n}_k}_2^2. \label{ineq:PhiEquiv}
\end{align}
As a result, $\Phi_k(t)$ is equivalent to the $H^1$ norm of $n_{k}$ but with constants that depend on $A$ and $k$.
The primary step in the results of \cite{BCZ15} is that for $u(y)$ satisfying the hypotheses in \eqref{thm:2D}, then for the passive scalar equation on $\mathbb{T}^2$,
\be
\pa_t f+u(y)\pa_x f= \frac{1}{A}\de f,
\ee
the norm $\Phi_k[f(t)]$ satisfies the following differential inequality for some small constant $\tilde{\ep}$  independent of $k, A$ (but depending on $u$):
\be
\frac{d}{dt}\Phi_k[f(t)]\leq -\tilde{\ep}\frac{|k|^{1/2}}{A^{1/2}}\Phi_k[f(t)].
\ee
The primary step in the proof of \eqref{ctrl:ED} is the analogous statement (though summed over all $k$ due to the nonlinearity).
\vs
\begin{pro}\label{C2pro}
There exists a small constant $c>0$ depending only on $u$ such that, under the bootstrap hypotheses and for $A$  sufficiently large, there holds
\bel\label{Phitestimate}
\frac{d}{dt}\Phi[n(t)]\leq -\frac{c}{A^{1/2}}\Phi[n(t)].
\eel
By \eqref{ineq:PhiEquiv}, it follows that
\begin{align}
\norm{n_{\neq}(t)}_{L^2}^2 \leq \Phi(0) e^{-cA^{-1/2}t} \lesssim A^{1/2}\norm{n_{in}}^2_{H^1}e^{-cA^{-1/2}t}.
\end{align}
\end{pro}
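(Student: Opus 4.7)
The plan is to differentiate $\Phi_k[n(t)]$ in time using \eqref{nonzeromodeF}, isolate the passive-scalar contribution (to which the BCZ15 hypocoercivity bound applies directly), and absorb the remaining $L_k$ and $NL_k$ contributions using the bootstrap hypotheses together with the $A^{-1}$ prefactor that both of them carry. Writing $\partial_t \widehat{n}_k = -iku\widehat{n}_k + A^{-1}(\partial_{yy}-k^2)\widehat{n}_k - L_k - NL_k$ and applying this to each of the four summands in \eqref{Phi k}, I obtain
\begin{equation*}
\frac{d}{dt}\Phi_k[n] = \Bigl(\tfrac{d}{dt}\Phi_k\Bigr)_{\mathrm{psc}} - 2\,\mathrm{Re}\bigl\langle L_k+NL_k,\; \widehat{n}_k + \alpha \,(\text{-}\partial_{yy})\widehat{n}_k + ik\beta u'\partial_y^{-1}\widehat{n}_k + k^2\gamma (u')^2\widehat{n}_k\bigr\rangle + \mathcal{B}_k,
\end{equation*}
where $\mathcal{B}_k$ collects the remaining pieces coming from the cross term and $(d/dt\,\Phi_k)_{\mathrm{psc}}$ is the passive-scalar part. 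By the main hypocoercivity estimate of \cite{BCZ15}, the latter satisfies
\begin{equation*}
\Bigl(\tfrac{d}{dt}\Phi_k\Bigr)_{\mathrm{psc}} \leq -\tilde{\epsilon}\frac{|k|^{1/2}}{A^{1/2}}\Phi_k[n] - \mathcal{D}_k[n],
\end{equation*}
where $\mathcal{D}_k[n]$ is a coercive remainder controlling, among other things, $A^{-1}\|\partial_y\widehat{n}_k\|_2^2$, $A^{-1}|k|^2\|\widehat{n}_k\|_2^2$, and $\beta |k|^2\|u'\widehat{n}_k\|_2^2$. The goal then is to show that, under $\mathbf{(H)}$, the sum over $k\neq 0$ of the error pairings can be split as $\tfrac12\sum_k\mathcal{D}_k[n] + \tfrac{c}{2A^{1/2}}\Phi[n]$ provided $A$ is large enough depending on $\CC$.

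For the $L_k$ contribution I would use one-dimensional Sobolev embedding on the zero mode: $\|n_0-\overline{n}\|_{L^\infty_y}\lesssim \|n_0-\overline n\|_{L^2_y}^{1/2}\|\partial_y n_0\|_{L^2_y}^{1/2}\lesssim \CC$, and elliptic regularity for the 1D equation $-\partial_{yy}c_0 = n_0-\overline n$ gives $\|\partial_y c_0\|_{L^\infty_y}\lesssim \|n_0-\overline n\|_{L^2_y}\lesssim \CC$. Each of the four pieces of $A L_k$ in \eqref{L} then gets paired with the test functions appearing above; a typical term is
\begin{equation*}
\tfrac{1}{A}|\langle \partial_y c_0\,\partial_y\widehat{n}_k, \widehat{n}_k\rangle|\lesssim \tfrac{\CC}{A}\|\partial_y\widehat{n}_k\|_2\|\widehat{n}_k\|_2 \leq \tfrac{1}{A}\|\partial_y\widehat{n}_k\|_2^2 + \tfrac{\CC^2}{A}\|\widehat{n}_k\|_2^2,
\end{equation*}
in which the first summand is absorbed into $\mathcal{D}_k$ and the second into $\tilde\epsilon|k|^{1/2}A^{-1/2}\Phi_k$ once $A\geq A_0(\CC)$. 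The weighted testings against $\alpha\partial_{yy}\widehat{n}_k$ and $k^2\gamma(u')^2\widehat{n}_k$ generate $A$- and $k$-dependent weights prescribed by \eqref{def:abg}, but because $\alpha, \gamma$ have fractional powers of $A$ while the $L_k$ terms carry $A^{-1}$, the net power of $A^{-1/2}$ is available for all of them. For $NL_k$ I would first sum in $k$ and pass to physical space, so that the three quadratic pieces become $A^{-1}\int(\cdot)\,dx\,dy$ of $-n_{\neq}^2\,m$, $\partial_y c_{\neq}\partial_y n_{\neq}\,m$, $|\partial_x|c_{\neq}|\partial_x|n_{\neq}\,m$ against the appropriate multiplier $m$. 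Elliptic regularity $\|\grad c_{\neq}\|_{L^2}\lesssim \|n_{\neq}\|_{L^2}$ combined with the bootstrap bound \eqref{H4} converts these into products of the form $A^{-1}\CC \|n_{\neq}\|_2\|\grad n_{\neq}\|_2$, which are absorbed into the $A^{-1}\|\grad n_{\neq}\|_2^2$ piece of $\sum_k\mathcal{D}_k$ plus $A^{-1}\CC^2\|n_{\neq}\|_2^2$.

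The delicate obstacle is $\mathcal{B}_k$: the cross term $2k\,\mathrm{Re}\langle i\beta u'\widehat{n}_k,\partial_y\widehat{n}_k\rangle$ produces contributions of the shape $\beta k\langle u'(L_k+NL_k),\partial_y\widehat{n}_k\rangle$ and, after integration by parts in $y$, $\beta k\langle u'\widehat{n}_k, \partial_y(L_k+NL_k)\rangle$. The second of these looks to lose a derivative on the nonlinearity, but moving $\partial_y$ back onto $u'\widehat{n}_k$ only costs a harmless factor of $u''\in L^\infty$ (using $u\in C^3$) and the quantities $\|\partial_y n_0\|_{L^2_y}$, $\|\partial_y c_0\|_{L^\infty_y}$ controlled by \eqref{H3}. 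Because $\beta=\epsilon_\beta|k|^{-1}$ carries a favorable power of $|k|$, together with the inequality $8\beta^2\le\alpha\gamma$ that was built into the choice \eqref{def:abg}, these pieces are absorbed either into $\mathcal{D}_k$ or into the leading hypocoercive decay once $A$ is chosen sufficiently large depending on $\CC$ (and hence on $\norm{n_{in}}_{H^1}, \norm{n_{in}}_{L^\infty}, u$). Summing the resulting mode-by-mode inequality in $k$ yields \eqref{Phitestimate}; Grönwall's inequality and the equivalence \eqref{ineq:PhiEquiv} then give the claimed $L^2$ decay with constant $4C_{ED}\norm{n_{in}}_{H^1}^2$ on the right-hand side (which in fact fixes the choice of $C_{ED}$ in the bootstrap).
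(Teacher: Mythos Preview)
Your proposal follows essentially the same strategy as the paper: isolate the passive-scalar contribution handled by the BCZ15 hypocoercivity estimate, then absorb the $L_k$ and $NL_k$ pairings against each of the four components of $\Phi_k$ into the negative dissipation terms using the bootstrap hypotheses and the $A^{-1}$ prefactor. Your treatment of the $L_k$ terms and of the $\beta$ cross term (including the integration by parts and the use of $8\beta^2\le\alpha\gamma$) matches the paper closely.

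One point is underdeveloped: your claim that the $NL_k$ pairings all reduce to products like $A^{-1}\CC\|n_{\neq}\|_2\|\grad n_{\neq}\|_2$ is too optimistic for the $\gamma$-weighted pairing. Because $\gamma=\epsilon_\gamma A^{1/2}|k|^{-3/2}$ carries an \emph{unfavorable} factor of $A^{1/2}$, the term $NL_k^\gamma$ cannot be absorbed into $A^{-1}\|\grad n_{\neq}\|_2^2$ alone. The paper instead integrates by parts to produce $\|\sqrt{\gamma}u'\partial_x\grad n_{\neq}\|_2$ (which matches a negative term in $\mathcal{N}_k$) and then uses a fractional interpolation $\||\partial_x|^{1/4}(u'n_{\neq}\grad c_{\neq})\|_2\lesssim\|u'n_{\neq}\grad c_{\neq}\|_2^{3/4}\|\partial_x(u'n_{\neq}\grad c_{\neq})\|_2^{1/4}$ together with $\|\grad c_{\neq}\|_{L^\infty}\lesssim\|n_{\neq}\|_{L^3}$ to close. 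This is a genuine extra step beyond what your sketch provides, though not a conceptual obstruction. Also note that the proposition itself yields only $\|n_{\neq}(t)\|_2^2\lesssim A^{1/2}\|n_{in}\|_{H^1}^2 e^{-cA^{-1/2}t}$; the sharper bound $2C_{ED}\|n_{in}\|_{H^1}^2 e^{-ct/(A^{1/2}\log A)}$ in \eqref{ctrl:ED} requires combining this with the short-time estimate of Proposition~\ref{C2pro0}, as the paper does in the remark immediately following.
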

\begin{rmk}
Propositions \ref{C2pro0} and \ref{C2pro} together imply \eqref{ctrl:ED}.
Indeed, for $A$ sufficiently large:
\be\ba
||n_{\neq}(t)||_2^2 & \lesssim ||n_{in}||_{H^1}^2\mathbf{1}_{t\leq \frac{1}{2c}A^{1/2}\log A}+\mathbf{1}_{t\geq \frac{1}{2c}A^{1/2}\log A} A^{1/2}||n_{in}||_{H^1}^2 e^{-\frac{c}{A^{1/2}}t}\\
& \lesssim ||n_{in}||_{H^1}^2e^{-\frac{c}{2A^{1/2}\log A}t}.
\ea\ee
\end{rmk}

We first compute the time derivative of $\Phi_k[n(t)]$.

\begin{pro}\label{thm d/dt Phi k}
For $\tilde{\epsilon}$ sufficiently small depending only on $u$, there holds,
\begin{align}
\frac{d}{dt}\Phi_k[n(t)] \leq&\bigg\{-\frac{\tilde{\ep}}{2} \frac{|k|^{1/2}}{A^{1/2}}||\widehat{n}_k||_2^2-\frac{\tilde{\ep}}{2}\frac{|k|^{1/2}}{A^{1/2}}||\sqrt{\al}\pa_y \widehat{n}_k||_2^2-\frac{\tilde{\ep}}{2}\frac{|k|^{5/2}}{A^{1/2}}||\sqrt{\gamma}u'\widehat{n}_k||_2^2-\frac{1}{4A}||\pa_y\wh{n}_k||_2^2\nonumber\\
&-\frac{1}{2}|k|^2||\sqrt{\beta}u' \wh{n}_k||_2^2-\frac{1}{2A}|k|^2||\wh{n}_k||_2^2-\frac{1}{4A}||\sqrt{\al}\pa_{yy}\wh{n}_k||_2^2\nonumber\\
&-\frac{1}{4A}|k|^4||\sqrt{\gamma}u' \wh{n}_k||_2^2-\frac{1}{4A}|k|^2||\sqrt{\gamma}u'\pa_y\wh{n}_k||_2^2\bigg\}\nonumber\\
&+\bigg\{2Re\lan -L_k ,\widehat{n}_k\ran
-2Re\lan \al \pa_{yy}\widehat{n}_k,-L_k\ran-2kRe[\lan i\beta u'L_k,\pa_y\widehat{n}_k\ran+\lan i\beta u'\widehat{n}_k,\pa_y L_k\ran]\nonumber\\
&+2|k|^2Re\lan \gamma (u')^2\widehat{n}_k,-L_k\ran\bigg\}\nonumber\\
&+\bigg\{-2Re\lan NL_k,\widehat{n}_k\ran
+2Re\lan  \al \pa_{yy}\widehat{n}_k,NL_k\ran-2kRe[\lan i\beta u'NL_k,\pa_y\widehat{n}_k\ran+\lan i\beta u'\widehat{n}_k,\pa_y NL_k\ran]\nonumber\\
&-2|k|^2Re\lan \gamma (u')^2\widehat{n}_k,NL_k\ran\bigg\}\nonumber\\
=:&\mathcal{N}_k+\{L_k^1+L_k^{\al}+L_k^{\beta}+L_k^{\gamma}\}+\{NL_k^1+NL_k^{\al}+NL_k^{\beta}+NL_k^{\gamma}\}, \label{ddtPhik}
\end{align}
where $\mathcal{N}_k$ refers to the negative terms.
Recall that $L_k,NL_k$ are defined in (\ref{L},\ref{NL}).
\end{pro}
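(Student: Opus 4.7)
The plan is to differentiate each of the four pieces of $\Phi_k[n(t)]$ separately in time, substitute the evolution equation \eqref{nonzeromodeF} for $\pa_t\wh{n}_k$, and organize the resulting contributions according to their origin: transport plus diffusion, $-L_k$, and $-NL_k$. Because $\alpha,\beta,\gamma$ are $k$-dependent but $y$-independent, they pass through all $y$-integrations by parts. Once one performs a single integration by parts in the $\alpha$-piece that converts $2\alpha\,\mathrm{Re}\lan\pa_y\pa_t\wh{n}_k,\pa_y\wh{n}_k\ran$ into $-2\alpha\,\mathrm{Re}\lan \pa_t\wh{n}_k,\pa_{yy}\wh{n}_k\ran$, the contributions coming from $-L_k$ and $-NL_k$ read off immediately and produce precisely the bracketed sums $\{L_k^1+L_k^\alpha+L_k^\beta+L_k^\gamma\}$ and $\{NL_k^1+NL_k^\alpha+NL_k^\beta+NL_k^\gamma\}$ displayed in \eqref{ddtPhik}. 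The substantive task is therefore to show that the transport-plus-diffusion contribution is bounded above by $\mathcal{N}_k$ for $\tilde\ep$ sufficiently small.

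This latter bound is exactly the passive-scalar hypocoercivity estimate of \cite{BCZ15}. There, the small constants $\ep_\alpha,\ep_\beta,\ep_\gamma$ in \eqref{def:abg} are tuned so that, when one computes the time derivative of $\Phi_k[f]$ along $\pa_t f+u(y)\pa_x f = A^{-1}\Delta f$, the ``bad'' cross terms generated by the advection are absorbed into portions of the favorable dissipation quadratic forms $A^{-1}\|\pa_y\wh{n}_k\|_2^2$, $A^{-1}\|\sqrt{\alpha}\pa_{yy}\wh{n}_k\|_2^2$, $A^{-1}|k|^2\|\wh{n}_k\|_2^2$, $A^{-1}|k|^4\|\sqrt{\gamma}u'\wh{n}_k\|_2^2$, and $A^{-1}|k|^2\|\sqrt{\gamma}u'\pa_y\wh{n}_k\|_2^2$; the remaining portions, with coefficients $1/(4A)$ or $1/(2A)$, are kept in reserve and appear explicitly in $\mathcal{N}_k$. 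The key coercive term $-\tfrac{1}{2}|k|^2\|\sqrt{\beta}u'\wh{n}_k\|_2^2$ arises from the advection-cross interaction $2k\,\mathrm{Re}\lan i\beta u'(u(y)ik\wh{n}_k),\pa_y\wh{n}_k\ran + 2k\,\mathrm{Re}\lan i\beta u'\wh{n}_k, \pa_y(u(y)ik\wh{n}_k)\ran$, in which the mixed $uu'\pa_y$ pieces cancel and leave $-2|k|^2\beta\|u'\wh{n}_k\|_2^2$. Combined with the reserved portions of the dissipation, this yields the leading-order $-\tilde\ep(|k|^{1/2}/A^{1/2})(\|\wh{n}_k\|_2^2+\|\sqrt{\alpha}\pa_y\wh{n}_k\|_2^2+|k|^2\|\sqrt{\gamma}u'\wh{n}_k\|_2^2)$ gain in $\mathcal{N}_k$.

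The main analytical obstacle, already handled in \cite{BCZ15}, is the careful bookkeeping of cross terms involving $u''$ (which enter from several integrations by parts in the $\alpha$ and $\gamma$ pieces) and the fact that the coercive form $|k|^2\|\sqrt{\beta}u'\wh{n}_k\|_2^2$ degenerates near critical points of $u$. This is where the hypotheses $u\in C^3$ with only non-degenerate critical points are essential, via quantitative lower bounds on $|u'|^2$ away from the critical set combined with a localization argument near it. Since this bookkeeping has already been carried out in \cite{BCZ15}, the proof of Proposition \ref{thm d/dt Phi k} reduces to quoting the passive-scalar identity and appending the $-L_k$ and $-NL_k$ terms, which pass linearly through the four time derivatives in the definition \eqref{Phi k} of $\Phi_k$.
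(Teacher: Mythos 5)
Your proposal is correct and follows essentially the same route as the paper: differentiate each piece of $\Phi_k$, substitute \eqref{nonzeromodeF}, cite the passive-scalar hypocoercivity computation of \cite{BCZ15} for the transport-plus-diffusion contribution (yielding $\mathcal{N}_k$ for $\tilde\epsilon$ small), and observe that the $-L_k$ and $-NL_k$ contributions pass linearly through the four time derivatives to give the bracketed sums. The paper's own proof is terser but identical in substance; your elaboration of the $\beta$-cross-term cancellation and the role of non-degenerate critical points accurately reflects what \cite{BCZ15} establishes.
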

\begin{proof}
The estimates from the linear terms (that is, the terms arising from the passive scalar equation \eqref{def:passcale}) are made in \cite{BCZ15} and are omitted for the sake of brevity.
The extra terms from the Keller-Segel nonlinearity in \eqref{def:PKSres} are immediate.
\end{proof}

The remainder of the section is devoted to controlling $L$ and $NL$ by the negative terms in (\ref{ddtPhik}).

\subsubsection{Estimate on the $L$ terms in \eqref{ddtPhik}}
These terms are linear in the $k$-th mode, and it accordingly makes sense to estimate these terms $k$-by-$k$.
In this section we prove that for $A$ sufficiently large,
\bel\label{Lestimate}
L_k^1+L_k^\al+L_k^\beta+L_k^\gamma \leq -\frac{1}{4}\mathcal{N}_k.
\eel
We begin by estimating the $L_k^1$ term in \eqref{ddtPhik}.
Integrating by parts and using Lemma \ref{lem 2}, Lemma \ref{lem 3}, and the bootstrap hypotheses, we have, for any fixed constant $B \geq 1$, 
\be\ba
|L_k^1| \leq& \frac{2}{A}(2||{n}_0-\overline{n}||_\infty+\overline{n})||\widehat{n}_k||_2^2+\frac{1}{AB}||\pa_y \widehat{n}_k||_2^2+\frac{B}{A}||\pa_y{c}_0||_\infty^2||\widehat{n}_k||_2^2
\\ & +\bigg|Re\frac{2}{A}\lan \pa_{yy}\widehat{c}_k\widehat{n}_k + \pa_y \widehat{c}_k \partial_y \widehat{n}_k, n_0 - \overline{n} \ran \bigg| \\  
\lesssim & \frac{C_{2,\infty}^2}{A}||\widehat{n}_k||_2^2+\frac{1}{AB}||\pa_y \widehat{n}_k||_2^2.
\ea\ee
Therefore, we can choose $B$ sufficiently large, and then $A$ sufficiently large, such that the following holds:
\begin{align*}
\abs{L_k^1} \lesssim \frac{C_{2,\infty}^2}{A}\norm{\widehat{n}_k}_{2}^2 + \frac{1}{A B}\norm{\partial_y\widehat{n}_k}_2^2 \leq -\frac{1}{16}\mathcal{N}_k,
\end{align*}
and hence by the definition of $\mathcal{N}_k$, this is consistent with \eqref{Lestimate}.

Turn next to $L_k^\al$ in \eqref{ddtPhik}, which we divide into the following four contributions:
\bel\label{L al}\ba
L_k^\al & =-2Re\lan \al \pa_{yy}\widehat{n}_k,\frac{1}{A}\overline{n}\widehat{n}_k+\frac{2}{A}({n}_0-\overline{n})\widehat{n}_k-\frac{1}{A}\pa_y{c}_0\pa_y\widehat{n}_k-\frac{1}{A}\pa_y\widehat{c}_k\pa_y{n}_0\ran \\ & =: L^\al_{k,0}+L^\al_{k,1}+L^\al_{k,2}+L^\al_{k,3}.
\ea\eel
For the $L^\al_{k,1}$ term, we have the following by the bootstrap hypotheses, for any fixed $B \geq 1$:
\be\ba
\abs{L^\al_{k,1}} & \lesssim \frac{1}{AB}||\sqrt{\al}\pa_{yy}\widehat{n}_k||_2^2+\frac{B}{A}||\sqrt{\al}({n}_0-\overline{n})\widehat{n}_k||_2^2\\
& \lesssim \frac{1}{AB}||\sqrt{\al}\pa_{yy}\widehat{n}_k||_2^2 + \frac{B}{A^{3/2}}||{n}_0-\overline{n}||_\infty^2||\widehat{n}_k||_2^2 \\
& \lesssim \frac{1}{AB}||\sqrt{\al}\pa_{yy}\widehat{n}_k||_2^2 + \frac{B C_\infty^2}{A^{3/2}}||\widehat{n}_k||_2^2.
\ea\ee
Recalling the definition $\mathcal{N}_k$ from \eqref{ddtPhik}, it follows that by choosing $B$, then $A$, sufficiently large, we can control this term consistent with \eqref{Lestimate}.
The $L^\al_{k,0}$ term is treated in the same manner; we omit the details for brevity.

Next, we estimate the second term $L^\al_{k,2}$ in (\ref{L al}).
Using Lemma \ref{lem 3}, we have the following for any $B \geq 1$:
\be\ba
\abs{L^\al_{k,2}} \lesssim &\frac{1}{BA}||\sqrt{\al}\pa_{yy}\widehat{n}_k||_2^2+\frac{B}{A^{3/2}}||\pa_y{c}_0||_\infty^2||\pa_y\widehat{n}_k||_2^2\\
\lesssim &\frac{1}{BA}||\sqrt{\al}\pa_{yy}\widehat{n}_k||_2^2+\frac{B\CC^2}{A^{3/2}}||\pa_y\widehat{n}_k||_2^2.
\ea\ee
Hence, by the bootstrap hypotheses and the definition of $\mathcal{N}_k$, it follows we can choose $B$ large and then $A$ large to control this term consistent with \eqref{Lestimate}.

Similarly, for $L^\al_{k,3}$ in (\ref{L al}), by Lemma \ref{lem 2}
\be\ba
|L^\al_{k,3}| \lesssim&\frac{1}{BA}||\sqrt{\al}\pa_{yy}\widehat{n}_k||_2^2+\frac{B}{A^{3/2}}||\pa_y\widehat{c}_k||_\infty^2||\pa_y{n}_0||_2^2\\
\lesssim& \frac{1}{BA}||\sqrt{\al}\pa_{yy}\widehat{n}_k||_2^2+\frac{B}{A^{3/2}}||\widehat{n}_k||_2^2||\pa_y{n}_0||_2^2.\\
\ea\ee
As above, it follows we can choose $B$ large and then $A$ large to control this term consistent with \eqref{Lestimate}.

Next, turn to the $L_k^\beta$ term in (\ref{ddtPhik}), which we divide into two contributions:
\bel\label{L beta}\ba
L_k^\beta=&2kRe \lan i\beta u'\widehat{n}_k,\pa_y(\frac{1}{A}\overline{n}\widehat{n}_k+\frac{2}{A}({n}_0-\overline{n})\widehat{n}_k-\frac{1}{A}\pa_y{c}_0\pa_y\widehat{n}_k-\frac{1}{A}\pa_y\widehat{c}_k\pa_y{n}_0)\ran  \\
&+2k Re \lan i\beta u'(\frac{1}{A}\overline{n}\widehat{n}_k+\frac{2}{A}({n}_0-\overline{n})\widehat{n}_k-\frac{1}{A}\pa_y{c}_0\pa_y\widehat{n}_k-\frac{1}{A}\pa_y\widehat{c}_k\pa_y{n}_0),\pa_y\widehat{n}_k\ran \\
=:& L^\beta_{k,1}+L^\beta_{k,2}.
\ea\eel
By analogy with the $\alpha$ terms, the first term in (\ref{L beta}) is further decomposed via
\bel\label{L beta 1}\ba
L^\beta_{k,1}=&2kRe\lan i\beta u'\widehat{n}_k,\pa_y(\frac{1}{A}\overline{n}\widehat{n}_k+\frac{2}{A}({n}_0-\overline{n})\widehat{n}_k-\frac{1}{A}\pa_y{c}_0\pa_y\widehat{n}_k-\frac{1}{A}\pa_y\widehat{c}_k\pa_y{n}_0)\ran\\
=:&L^\beta_{k,10}+L^\beta_{k,11}+L^\beta_{k,12}+L^\beta_{k,13}.
\ea\eel
For the $L^\beta_{k,11}$ term in (\ref{L beta 1}), we have the following, (for any fixed $B \geq 1$),
\be\ba
\abs{L^\beta_{k,11}}=&\abs{2kRe\lan i\beta u''\widehat{n}_k+i\beta u'\pa_y\widehat{n}_k,\frac{2}{A}({n}_0-\overline{n})\widehat{n}_k\ran}\\
=&\abs{2kRe\lan i\beta u'\pa_y\widehat{n}_k,\frac{2}{A}({n}_0-\overline{n})\widehat{n}_k\ran}\\
\lesssim &\frac{1}{AB}||\pa_y\widehat{n}_k||_2^2+\frac{B}{A}|k|^2||{n}_0-\overline{n}||_\infty^2||\sqrt{\beta}u'\widehat{n}_k||_2^2.
\ea\ee
By the bootstrap hypotheses and by choosing $B$, then $A$, large enough, this term is controlled consistent with \eqref{Lestimate}.
The $L^\beta_{k,10}$ term is treated in the same manner; we omit the details for the sake of brevity.

For the $L^\beta_{k,12}$ term in (\ref{L beta 1}), using Lemma \ref{lem 3}, we have that for some fixed $B \geq 1$, the following holds,
\be
\ba
|L^\beta_{k,12}| \leq &\bigg|2kRe\lan i\beta u' \widehat{n}_k,\frac{1}{A}({n}_0-\overline{n})\pa_y\widehat{n}_k\ran\bigg| +\bigg|2kRe\lan i\beta u'\widehat{n}_k,\frac{1}{A}\pa_y{c}_0\pa_{yy}\widehat{n}_k\ran\bigg|\\
\leq&\frac{1}{AB}||\pa_y\widehat{n}_k||_2^2+\frac{B|k|^2 \beta}{A}||{n}_0-\overline{n}||_\infty^2||\sqrt{\beta}u' \widehat{n}_k||_2^2\\
&+\frac{1}{AB}||\sqrt{\al}\pa_{yy}\widehat{n}_k||_2^2+\frac{B|k|^2\beta}{A\alpha}||\sqrt{\beta}u' \widehat{n}_k||_2^2 ||\pa_y{c}_0||_\infty^2\\
\lesssim &\frac{1}{AB}||\pa_y\widehat{n}_k||_2^2+\frac{B|k|^2}{A}||{n}_0-\overline{n}||_\infty^2||\sqrt{\beta}u' \widehat{n}_k||_2^2\\
&+\frac{1}{AB}||\sqrt{\al}\pa_{yy}\widehat{n}_k||_2^2+\frac{B|k|^2 M^2}{A^{1/2}}||\sqrt{\beta}u' \widehat{n}_k||_2^2 
\ea
\ee
As above, by the bootstrap hypotheses, for $B$ and $A$ sufficiently large, this term is controlled consistent with \eqref{Lestimate}.

Consider next $L^\beta_{k,13}$ in (\ref{L beta 1}), which we integrate by parts and further sub-divide as:
\bel\label{L beta 13}\ba
L^\beta_{k,13}=&2kRe\lan i\beta u''  \widehat{n}_k+i\beta u'\pa_y  \widehat{n}_k,\frac{1}{A}\pa_{y}\widehat{c}_k\pa_y{n}_0\ran =: L^\beta_{k,131}+L^\beta_{k,132}.
\ea\eel
For $L^\beta_{k,131}$, by Lemma \ref{lem 2} and the definition of $\beta$, we have the following for a large constant $B \geq 1$,  
\be\ba
\abs{L^\beta_{k,131}}\leq&\frac{|k|^2B}{A}||\beta u'' \widehat{n}_k||_2^2||\pa_y{n}_0||_2+\frac{1}{AB}||\pa_y \widehat{c}_k||_\infty^2||\pa_y {n}_0||_2\\
\lesssim & \frac{B||\pa_y {n}_0||_2}{A}||\widehat{n}_k||_2^2+\frac{1}{AB}||\widehat{n}_k||_2^2||\pa_y {n}_0||_2.
\ea\ee
Therefore, by the bootstrap hypotheses, for $B$, then $A$, large, this term is controlled consistent with \eqref{Lestimate}.
Using Lemma \ref{lem 2} and the definition of $\beta$, the $L^\beta_{k,132}$ term in (\ref{L beta 13}) is handled as follows for a large constant $B \geq 1$:
\bel\label{iii 3 I}\ba
\abs{L^\beta_{k,132}} \lesssim & \frac{1}{A B}||\pa_y \widehat{n}_k||_2^2+\frac{B}{A}||\widehat{n}_k||_2^2||\pa_y{n}_0||_2^2. 
\ea\eel
Therefore, by the bootstrap hypotheses (in particular, \eqref{H3}), for $B$ and $A$ sufficiently large, this is consistent with \eqref{Lestimate}.

Turn next to $L^\beta_{k,2}$, which we sub-divide as follows:
\bel\label{L beta 2}\ba
L^\beta_{k,2} =&2kRe\lan i\beta u'\frac{1}{A}\overline{n}\widehat{n}_k,\pa_y\widehat{n}_k\ran+2kRe\lan i\beta u'\frac{2}{A}({n}_0-\overline{n})\widehat{n}_k,\pa_y\widehat{n}_k\ran\\
&-2kRe\lan i\beta u'\frac{1}{A}\pa_y{c}_0\pa_y\widehat{n}_k,\pa_y\widehat{n}_k\ran-2kRe\lan i\beta u'\frac{1}{A}\pa_y\widehat{c}_k\pa_y{n}_0,\pa_y\widehat{n}_k\ran\\
=:&L^\beta_{k,20}+L^\beta_{k,21}+L^\beta_{k,22}+L^\beta_{k,23}.
\ea\eel
By anti-symmetry,  $L^\beta_{k,22}=0$ (note this is simply the observation that $\brak{u'\partial_y c_0 \sqrt{\beta} \partial_x \partial_yn, \sqrt{\beta}\partial_y n} = 0$).
For the $L^\beta_{k,21}$ term, we use the following straightforward estimate for a constant $B \geq 1$:
\be\ba
\abs{L^\beta_{k,21}} \lesssim &\frac{1}{AB}||\pa_y\widehat{n}_k||_2^2+\frac{B||{n}_0-\overline{n}||_\infty^2}{A}|k|^2||\sqrt{\beta}u'\widehat{n}_k||_2^2.
\ea\ee
This is consistent with \eqref{Lestimate} by the bootstrap hypotheses and $B$,$A$ large. The $L^\beta_{k,20}$ is treated similarly, we skip the detail for brevity.
The $L^\beta_{k,23}$ term can be estimated in the same manner as $L^\beta_{k,132}$ above in (\ref{iii 3 I}) and hence is omitted for brevity.
This completes the treatment of the $L_k^\beta$ term in \eqref{ddtPhik}.

Finally, we estimate $L_k^\gamma$ in (\ref{ddtPhik}).
We first sub-divide into four parts:
\bel\label{L gamma}\ba
L_k^\gamma & = 2|k|^2Re\lan \gamma (u')^2\widehat{n}_k,\frac{1}{A}\overline{n}\widehat{n}_k+\frac{2}{A}({n}_0-\overline{n})\widehat{n}_k-\frac{1}{A}\pa_y{c}_0\pa_y\widehat{n}_k-\frac{1}{A}\pa_y\widehat{c}_k\pa_y{n}_0\ran \\ & =:  L^\gamma_{k,0}+ L^\gamma_{k,1}+L^\gamma_{k,2}+L^\gamma_{k,3}.
\ea\eel
The second term in (\ref{L gamma}) is estimated as follows for a fixed constant $B \geq 1$,
\be\ba
\abs{L^\gamma_{k,1}} \lesssim &\frac{B\gamma}{A\beta}||\sqrt{\beta}u' \widehat{n}_k||_2^2||{n}_0-\overline{n}||_\infty^2 +\frac{|k|^4}{AB}||\sqrt{\gamma}u'\widehat{n}_k||_2^2\\
\lesssim &\frac{B}{A^{1/2}}||\sqrt{\beta}u' \widehat{n}_k||_2^2||{n}_0-\overline{n}||_\infty^2+\frac{|k|^4}{AB}||\sqrt{\gamma}u'\widehat{n}_k||_2^2.
\ea\ee
As above, this is consistent with \eqref{Lestimate} by the bootstrap hypotheses and $B$,$A$ large. The term $L^\gamma_{k,0}$ is treated similarly, hence, we omit the details for the sake of brevity.
The term $L^\gamma_{k,2}$ in (\ref{L gamma}) is similar. Indeed, by Lemma \ref{lem 3}, we have for $B \geq 1$ large,
\be\ba
\abs{L^\gamma_{k,2}} \lesssim &\frac{B\gamma }{A\beta }|k|^2||\sqrt{\beta}u'\widehat{n}_k||_2^2||\pa_y{c}_0||_\infty^2+\frac{|k|^2}{AB}||\sqrt{\gamma}u'\pa_y\widehat{n}_k||_2^2\\
\lesssim &\frac{B}{A^{1/2}}|k|^2||\sqrt{\beta}u'\widehat{n}_k||_2^2||{n}_0-\overline{n}||_2^2+\frac{|k|^2}{AB}||\sqrt{\gamma}u'\pa_y\widehat{n}_k||_2^2.
\ea\ee
As usual, this is consistent with \eqref{Lestimate} by the bootstrap hypotheses and $B$,$A$ large.
The $L^\gamma_{k,3}$ term in (\ref{L gamma}), is estimated slightly differently; using Lemma \ref{lem 2}, we have for $B \geq 1$ large,
\be\ba
\abs{L^\gamma_{k,3}} \lesssim &\frac{1}{A^{1/2}B}|k|^{5/2}||\sqrt{\gamma}u'\widehat{n}_k||_2^2+\frac{B}{A^{3/2}}|k|^{3/2}\gamma ||\pa_y\widehat{c}_k||_\infty^2||\pa_y{n}_0||_2^2\\
\lesssim &\frac{1}{A^{1/2}B}|k|^{5/2}||\sqrt{\gamma}u'\widehat{n}_k||_2^2+\frac{B}{A}||\widehat{n}_k||_2^2||\pa_y{n}_0||_2^2.
\ea\ee
This is consistent with \eqref{Lestimate} by the bootstrap hypotheses and $B$,$A$ large.
This completes the proof of \eqref{Lestimate}, and hence, under the bootstrap hypotheses, the contributions of the $L$ terms in \eqref{ddtPhik} is absorbed by the $\mathcal{N}_k$ terms for $A$ chosen sufficiently large.

\subsubsection{Estimate on $NL$ terms} \label{sec:NLED}
As these terms are nonlinear in non-zero frequencies, it is more natural to consider all of the frequencies at once. For the $NL_k^1$ term in \eqref{ddtPhik}, writing,
\begin{align*}
-\sum_{k \neq 0}2 Re\lan NL_k,\widehat{n}_k\ran = -\brak{\frac{1}{A}\grad \cdot \left( n_{\neq} \grad c_{\neq}\right),n_{\neq}} = \frac{1}{A} \brak{ n_{\neq} \grad c_{\neq},\grad n_{\neq}} 
\leq \frac{1}{A} \norm{\grad c_{\neq}}_{\infty }\norm{\grad n_{\neq}}_{2} \norm{n_{\neq}}_{2}. 
\end{align*}
By \eqref{elliptic estimate appendix}, for some constant $B > 0$,
\begin{align*}
-\sum_{k \neq 0}2 Re\lan NL_k,\widehat{n}_k\ran 
& \lesssim \frac{1}{AB}\norm{\grad n_{\neq}}_{2}^2 + \frac{B}{A}\CC^{2}\norm{n_{\neq}}_{2}^2 .
\end{align*}
By first choosing $B$ large relative to the implicit constant, and then choosing $A$ large (relative to constants and $B$), these terms are absorbed by the negative terms in \eqref{ddtPhik}.

For the $NL_k^{\al}$ term in \eqref{ddtPhik}, we use \eqref{elliptic estimate appendix} and the bootstrap hypotheses to deduce (using the definition of $\alpha$;  recall that $\alpha$ is a Fourier multiplier in $x$),
\begin{align*}
2Re \sum_{k \neq 0} \brak{\alpha(\partial_x) \partial_{yy} \widehat{n}_k, NL_k} & = \frac{2}{A} \brak{\alpha(\partial_x) \pa_{yy} n_{\neq}, \grad \cdot \left( n_{\neq} \grad c_{\neq}\right)} \\
& \lesssim \frac{1}{A^{5/4}} \norm{\sqrt{\al}\pa_{yy} n_{\neq}}_2\left(\norm{\grad n_{\neq}}_2 \norm{\grad c_{\neq}}_{\infty} + \norm{n_{\neq}}_2 \norm{n_{\neq}}_{\infty}\right) \\
& \lesssim \frac{1}{A^{5/4}} \norm{\sqrt{\al}\pa_{yy} n_{\neq}}_2\left(\CC \norm{\grad n_{\neq}}_2 + C_\infty\norm{n_{\neq}}_2\right) \\
& \lesssim \frac{1}{A^{5/4}} \norm{\sqrt{\al}\pa_{yy} n_{\neq}}_2^2+\frac{\CC^2}{A^{5/4}}\norm{\grad n_{\neq}}_2^2  + \frac{C_\infty^2}{A^{5/4}}\norm{n_{\neq}}_2^2,
\end{align*}
and choosing $A$ large, these terms are absorbed by the negative terms in \eqref{ddtPhik}.

There are two terms in $NL_k^{\beta}$ in \eqref{ddtPhik}; we estimate the first as follows (using that $\beta(k) \lesssim \abs{k}^{-1}$ and defines a self-adjoint operator, Lemma \ref{elliptic estimate appendix}, and that $u$ does not depend on $x$):
\begin{align}
-2k\sum_{k \neq 0} Re\brak{i\beta(\partial_x) u'NL_k,\pa_y \widehat{n}_k} & =- \frac{2}{A}\brak{\beta(\partial_x) u' \pa_x\grad\cdot(n_{\neq} \grad c_{\neq}),\pa_y n_{\neq}} \nonumber \\
& \lesssim \frac{1}{A}\norm{n_{\neq}}_2 \norm{n_{\neq}}_{\infty}\norm{\partial_y n_{\neq}}_2 + \frac{1}{A}\norm{\beta u' \pa_x\pa_y n_{\neq}}_2 \norm{\grad c_{\neq}}_{\infty}\norm{\na n_{\neq}}_2 \nonumber\\
& \lesssim  \frac{C_\infty}{A}\norm{n_{\neq}}_2 \norm{\pa_y n_{\neq}}_2 + \frac{\CC}{A^{5/4}}\norm{\sqrt{\gamma} u' \pa_x\pa_y n_{\neq}}_2\norm{\na n_{\neq}}_2 \nonumber\\
& \lesssim \frac{C_\infty^2}{A^{3/4}}\norm{n_{\neq}}_2^2 + \frac{1}{A^{5/4}}\norm{\grad n_{\neq}}_2^2 + \frac{C_{2,\infty}^2}{A^{5/4}}\norm{\sqrt{\gamma} u' \pa_x\pa_y n_{\neq}}_2^2. \label{firstbetaNL}
\end{align}
Choosing $A$ large, these terms are absorbed by the negative terms in \eqref{ddtPhik}.
For the second term in $NL_k^{\beta}$ we use
\begin{align*}
2Re\sum_{k \neq 0} \brak{\beta(\pa_x) u' ik\wh{n}_{k},\partial_y NL_k} & = \frac{2}{A}\brak{\beta(\partial_x) u'\pa_x n_{\neq}, \pa_y \grad\cdot(n_{\neq} \grad c_{\neq})}  \\
& = - \frac{2}{A}\brak{\beta(\partial_x) u'' \pa_x n_{\neq}, \grad\cdot(n_{\neq} \grad c_{\neq})} - \frac{2}{A}\brak{\beta(\partial_x) u' \pa_x\pa_y n_{\neq}, \grad\cdot(n_{\neq} \grad c_{\neq})}  \\
& = NL^\beta_{k,1} + NL^\beta_{k,2}.
\end{align*}
Using Lemma \ref{elliptic estimate appendix}, $\beta(\partial_x) = \epsilon_\beta \abs{\partial_x}^{-1}$, and  that $u$ does not depend on $x$, we have,
\begin{align*}
\abs{NL^\beta_{k,1}} & \lesssim \frac{1}{A}\norm{\beta u''\pa_xn_{\neq}}_2 \left(\norm{\grad n_{\neq}}_2\norm{\grad c_{\neq}}_{\infty} + \norm{n_{\neq}}_2\norm{n_{\neq}}_\infty\right) \\
& \lesssim \frac{\CC}{A}\norm{n_{\neq}}_2 \left(\norm{\grad n_{\neq}}_2 + \norm{n_{\neq}}_2\right) \\
& \lesssim \frac{1}{BA}\norm{\grad n_{\neq}}_2^2 + \frac{B\CC^2}{A} \norm{n_{\neq}}_2^2,
\end{align*}
yielding terms which are absorbed by the negative terms in \eqref{ddtPhik} for $A$ sufficiently large. The treatment of $ NL^\beta_{k,2}$ is similar to \eqref{firstbetaNL}, hence it is omitted for the sake of brevity.

Turn finally to term $NL_k^{\gamma}$ in \eqref{ddtPhik} associated with $\gamma$:
\begin{align}
-2Re\sum_{k \neq 0} \brak{\abs{k}^2\gamma(k) u' n_{k}, u' NL_k} & = -\frac{2}{A}\brak{\gamma(\partial_x) u' \partial_x n_{\neq}, u' \partial_x \grad\cdot(n_{\neq} \grad c_{\neq})} \nonumber\\
& = \frac{2}{A}\brak{\gamma(\partial_x) u' \partial_x \grad n_{\neq}, u' \partial_x(n_{\neq} \grad c_{\neq})} +\frac{4}{A}\brak{\gamma(\partial_x) u' u'' \partial_{x} n_{\neq}, \partial_x(n_{\neq} \partial_y c_{\neq})}\nonumber \\
&=:NL^\gamma_{k,1}+NL^\gamma_{k,2}.\label{NL gamma}
\end{align}
 Then we use $\gamma(\partial_x) = \epsilon_\gamma A^{1/2} \abs{\partial_x}^{-3/2}$, interpolation, and Lemma \ref{elliptic estimate appendix} to deduce the following bound for $NL^\gamma_{k,1}$:
\begin{align*}
NL^\gamma_{k,1}\lesssim&\frac{1}{A}||\sqrt{\gamma}u'\pa_x\na n_{\neq}||_2||\sqrt{\gamma}\pa_x(u'n_{\neq}\na c_{\neq})||_2\\
 \lesssim &\frac{1}{A^{3/4}}\norm{\sqrt{\gamma} u' \partial_x \grad n_{\neq}}_2\norm{\abs{\partial_x}^{1/4}(u' n_{\neq} \grad c_{\neq})}_2 \\
 \lesssim& \frac{1}{A^{3/4}}\norm{\sqrt{\gamma} u' \partial_x \grad n_{\neq}}_2 \norm{ u' n_{\neq}\grad c_{\neq}}_2^{3/4}\norm{\partial_x(u' n_{\neq}\grad c_{\neq})}_2^{1/4} \\
 \lesssim& \frac{1}{A^{3/4}}\norm{\sqrt{\gamma} u' \partial_x \grad n_{\neq}}_2 \norm{ u' n_{\neq}}_2^{3/4} \norm{\grad c_{\neq}}_{\infty}^{3/4}\left(\norm{u'\partial_x n_{\neq}}_2^{1/4} \norm{\grad c_{\neq}}_{\infty}^{1/4} + \norm{n_{\neq}}_{\infty}^{1/4}\norm{\partial_x \grad c_{\neq}}_2^{1/4}\right) \\
 \lesssim& \frac{\CC}{A^{3/4}}\norm{\sqrt{\gamma} u' \partial_x \grad n_{\neq}}_2\norm{ u' n_{\neq}}_2^{3/4}\left(\norm{u'\partial_x n_{\neq}}_2^{1/4} + \norm{n_{\neq}}_2^{1/4}\right) \\
\lesssim& \frac{1}{BA}\norm{\sqrt{\gamma} u' \partial_x \grad n_{\neq}}_2^2 + \frac{B \CC^{2}}{A^{1/2}}\norm{u' n_{\neq}}_2^{3/2}\left(\norm{u'\partial_x n_{\neq}}_2^{1/2} + \norm{n_{\neq}}_2^{1/2}\right).
\end{align*}
Hence, for $B$ chosen large, then $A$ chosen large, we may absorb these contributions in the negative terms in \eqref{ddtPhik}.

Next we estimate the $NL^\gamma_{k,2}$ term in (\ref{NL gamma}),
\begin{align*}
NL^\gamma_{k,2}\lesssim&\frac{1}{A^{3/4}}||\sqrt{\gamma} u'|\pa_x|^{5/4}n_{\neq}||_2||n_{\neq}\na c_{\neq}||_2 \lesssim \frac{1}{A^{3/4}}||\sqrt{\gamma} u'|\pa_x|^{5/4}n_{\neq}||_2^2+\frac{\CC^2}{A^{3/4}}||n_{\neq}||_2^2.
\end{align*}
Hence, for $A$ chosen large, we may absorb these contributions in the negative terms in \eqref{ddtPhik}. This finishes the estimate of the $NL$ terms.

\subsection{Nonzero mode $L_t^2 \dot{H}_{x,y}^1$ estimate \eqref{ctrl:L2H1}}

The nonzero mode $L_t^2 \dot{H}_{x,y}^1$ estimate \eqref{ctrl:L2H1} comes from an estimate on the $\frac{d}{dt}||\wh{n}_{\neq}||_2^2$ and the knowledge that $||\wh{n}_{\neq}||_2^2$ is bounded by $4C_{ED}||n_{in}||_{H^1}^2$ from Hypothesis \eqref{H2}.
Indeed, from \eqref{def:PKS} and Lemma \ref{lem 3}, there holds for some universal constant $B$,
\begin{align}
\frac{1}{2}\frac{d}{dt}||{n}_{\neq}||_2^2=&\brak{n_{\neq},\frac{1}{A}\de n_{\neq}+\frac{1}{A}\overline{n}n_{\neq} + \frac{1}{A}\left(n_0-\bar{n}\right)n_{\neq}-\frac{1}{A}\na c_0\cdot \na n_{\neq}-\frac{1}{A}\na c_{\neq}\cdot \na n_0-\frac{1}{A}(\na\cdot(\na c_{\neq} n_{\neq}))_{\neq}}\nonumber\\
\leq & -\frac{1}{2A}||\na n_{\neq}||_2^2+\frac{1}{A}||{n}_{\neq}||_2^2\left(||n_0-\overline{n}||_\infty + \norm{n_0}_{\infty} + \norm{\grad c_0}_2^2\right) +\frac{1}{A}||\partial_y c_{\neq}||_\infty ||\pa_yn_0||_2 \norm{n_{\neq}}_2 \nonumber \\
& + \frac{2}{A} \norm{\grad n_{\neq}}_2 ||\na c_{\neq}||_4 ||n_{\neq}||_4 \nonumber \\ 
 \leq &-\frac{1}{2A}||\na n_{\neq}||_2^2+\frac{B(\CC^2+C_{\dot{H}^1}^2)}{A}||n_{\neq}||_2^2 + \frac{2}{A} \norm{\grad n_{\neq}}_2 ||\na c_{\neq}||_4 ||n_{\neq}||_4. 
\label{d dt n neq 2 2}
\end{align}
By the Gagliardo-Nirenberg-Sobolev inequality, we obtain
\begin{align*}
\norm{\grad n_{\neq}}_2 ||\na c_{\neq}||_4 ||n_{\neq}||_4  & \lesssim ||c_{\neq}||_2^{1/4}||\na^2 c_{\neq}||_2^{3/4}||n_{\neq}||_2^{1/2}||\na n_{\neq}||_2^{3/2} \lesssim ||n_{\neq}||_2^{3/2}||\na n_{\neq}||_2^{3/2},
\end{align*}
which implies the following (possibly adjusting $B$),
\begin{align}
\frac{1}{2}\frac{d}{dt}||{n}_{\neq}||_2^2 & \leq -\frac{1}{4A}||\na n_{\neq}||_2^2+\frac{B(\CC^2+C_{\dot{H}^1}^2)}{A}||n_{\neq}||_2^2 + \frac{B}{A}\norm{n_{\neq}}_2^6. \label{ineq:dtneq}
\end{align}
By \eqref{H2}, the time integral of $\frac{1}{A}||n_{\neq}||_{2}^2$ is estimated as
\bel
\int_0^{T_\star}\frac{1}{A}||n_{\neq}(t)||_2^2 dt \lesssim \frac{\log A}{A^{1/2}}.
\eel
Hence, by applying \eqref{H1},  integrating \eqref{ineq:dtneq}, and choosing $A$ large, there holds
\be
\ba
\frac{1}{A}\int_0^{T_\star}||\na n_{\neq}||_2^2dt\leq \frac{1}{A^{1/4}}+2||n_{in}||_{2}^2\leq 4||n_{in}||_{2}^2.
\ea
\ee
As a result, we have proved \eqref{ctrl:L2H1}.

\subsection{Zero mode estimate \eqref{ctrl:ZeroMd}}\label{G(t)trick}
First, by non-negativity, note that $\norm{n_0}_{L^1} = \norm{n}_{L^1} = M$ is constant in time.
We begin by estimating $||{n}_0-\overline{n}||_2^2$, then go on to estimate $||\pa_y {n}_0||_2^2$.
From \eqref{def:PKS} we have, by Minkowski's inequality, 
\be\ba
\frac{1}{2}\frac{d}{dt}||n_0-\overline{n}||_2^2 =&\lan n_0-\overline{n},\frac{1}{A}\pa_{yy}{n}_0-\frac{1}{A}\pa_y(\pa_y{c}_0{n}_0)-\frac{1}{A}(\na\cdot(\na c_{\neq} n_{\neq}))_0\ran\\
=&-\frac{1}{A}||\pa_y {n}_0||_2^2+\frac{1}{A}\lan \pa_y{n}_0,\pa_y {c}_0 n_0\ran+\frac{1}{A}\lan \pa_y {n}_0,(\pa_y c_{\neq} n_{\neq})_0\ran\\
\leq&-\frac{1}{2A}||\pa_yn_0||_2^2+\frac{1}{A}||\pa_y {c}_0||_\infty^2||{n}_0||_2^2+\frac{1}{A}||(\pa_y {c}_{\neq}{n}_{\neq})_0||_{L^2(\mathbb{T})}^2\\
\leq&-\frac{1}{2A}||\pa_y{n}_0||_2^2+\frac{B M^2}{A} ||{n}_0||_2^2+\frac{1}{A}||\pa_y {c}_{\neq}{n}_{\neq}||_{L^2(\mathbb{T}^2)}^2.\\
\ea\ee
Recall the following Nash inequality on $\mathbb{T}$, under the assumption that $\int_{\mathbb{T}} \rho dx=0$:
\bel\label{Nash ineq}
||\rho||_{L^2(\mathbb{T})}\lesssim ||\rho||_{L^1(\mathbb{T})}^{2/3}||\pa_y\rho||_{L^2(\mathbb{T})}^{1/3}.
\eel
Hence,
\be
-||\pa_y{n}_0||_2^2\lesssim-\frac{||{n}_0-\overline{n}||_2^6}{||n-\overline{n}||_1^4}\lesssim -\frac{||{n}_0-\overline{n}||_2^6}{M^4}.
\ee
Therefore,
\be\ba
\frac{d}{dt}|| {n}_0-\overline{n}||_2^2\lesssim&-\frac{1}{A}\frac{||{n}_0-\overline{n}||_2^6}{M^4}+\frac{1}{A}||{n}_0-\overline{n}||_2^2(||{n}_0-\overline{n}||_2^2+M^2)+\frac{1}{A}||\na n_{\neq}||_2||n_{\neq}||_2^{3}\\
\lesssim&-\frac{1}{A}\frac{||{n}_0-\overline{n}||_2^2}{M^4}(||{n}_0-\overline{n}||_2^4-M^4||{n}_0-\overline{n}||_2^2-M^6)+ \left\{\frac{1}{AB}||\na n_{\neq}||_2^2+\frac{B}{A}||n_{\neq}||_2^6 \right\} 
.\ea\ee
Define the following quantity $G$ to be the time integral of the terms in the $\{\cdot\}$:
\begin{align}
G(t):=&\int_0^t\frac{1}{AB}||\na n_{\neq}||_2^2+\frac{B}{A}||n_{\neq}||_2^6d\tau, \quad t\geq 0. \label{def:Gfirst}
\end{align}
By the bootstrap hypotheses, there holds $0\leq G\lesssim ||n_{in}||_2^2+||n_{in}||_{H^1}^6 A^{-1/2}\log A$.
Applying this definition,
\be\ba
\frac{d}{dt}(||n_0-\overline{n}||_2^2-G(t))\lesssim &-\frac{1}{A}\frac{||n_0-\overline{n}||_2^2}{M^4}(||n_0-\overline{n}||_2^4-M^8-M^6)\\
\lesssim&-\frac{1}{A}\frac{||n_0-\overline{n}||_2^2}{M^4}(||n_0-\overline{n}||_2^2-G(t)-\sqrt{M^8+M^6})(||n_0-\overline{n}||_2^2+\sqrt{M^8+M^6}).\\
\ea\ee
Choosing $A$ large relative to $||n_{in}||_{H^1}^6$ and universal constants, we have
\bel\label{ML2}\ba
||n_0 - \overline{n}||_2^2& \lesssim \overline{n}^2+G(t)+\sqrt{M^8+M^6} + \norm{n_{in}}_2^2 \\
& \lesssim M^2 +  ||n_{in}||_2^2 +\sqrt{M^8+M^6}\\
& =: C_{L^2}^2(||n_{in}||_2^2,M).
\ea\eel
This completes the estimate on $\norm{n_0}_2$, which implies the first estimate in conclusion (\ref{ctrl:ZeroMd}).

Next, we use (\ref{ML2}) to estimate $||\pa_y {n}_0||_2^2$. From \eqref{def:PKS} and Minkowski's integral inequality, we have for some $B > 0$,
\begin{align}
\frac{1}{2}&\frac{d}{dt}||\pa_y {n}_0||_2^2\nonumber\\
=&\lan \pa_y{n}_0,\pa_y(\frac{1}{A}\pa_{yy}n_0-\frac{1}{A}\pa_y(\pa_y {c}_0n_0)-\frac{1}{A}(\na\cdot(\na {c}_{\neq}{n}_{\neq}))_0)\ran\nonumber\\
\leq&-\frac{1}{2A}||\pa_{yy}{n}_0||_2^2+\frac{B}{A}||\pa_{yy}{c}_0{n}_0||_2^2+\frac{B}{A}||\pa_y{c}_0\pa_y{n}_0||_2^2+\frac{B}{A}||(\pa_{yy} {c}_{\neq}{n}_{\neq})_0||_{L^2(\mathbb{T})}^2+\frac{B}{A}||(\pa_y c_{\neq}\pa_y n_{\neq})_0||_{L^2(\mathbb{T})}^2\nonumber\\
\leq&-\frac{1}{2A}||\pa_{yy}{n}_0||_2^2+\frac{B}{A}||\pa_{yy}{c}_0{n}_0||_2^2+\frac{B}{A}||\pa_y{c}_0\pa_y{n}_0||_2^2+\frac{B}{A}|| n_{\neq}||_{L^2(\Torus^2)}^2||n_{\neq}||_{L^\infty(\Torus^2)}^2\nonumber\\
&+\frac{B}{A}||\pa_y c_{\neq}||_{L^\infty(\Torus^2)}^2||\pa_y n_{\neq}||_{L^2(\Torus^2)}^2.\label{pa y n 0}
\end{align}
Using (\ref{elliptic estimate appendix}) in the above estimate (\ref{pa y n 0}), we have for some $B$ (possibly adjusted from above),
\bel\label{d dt pa y n 0}\ba\frac{1}{2}\frac{d}{dt}||\pa_y {n}_0||_2^2
\leq&-\frac{1}{2A}||\pa_{yy}{n}_0||_2^2+\frac{B}{A}||\pa_{yy}{c}_0{n}_0||_2^2+\frac{B}{A}||\pa_y{c}_0\pa_y{n}_0||_2^2+\frac{B}{A}||n_{\neq}||_{L^2(\mathbb{T}^2)}^2||n_{\neq}||_{L^\infty(\mathbb{T}^2)}^2\\
&+\frac{B \CC^2}{A} ||\pa_y n_{\neq}||_{L^2(\mathbb{T}^2)}^2.
\ea\eel
Analogously to \eqref{def:Gfirst}, we define
\begin{align}
G(t):=&\int_0^t\frac{B}{A}||n_{\neq}||_{L^2(\mathbb{T}^2)}^2||{n}||_{L^\infty(\mathbb{T}^2)}^2+\frac{B\CC^2}{A}||\pa_y n_{\neq}||_{L^2(\mathbb{T}^2)}^2d\tau,\quad\forall t\in [0,T_\star]. \label{def:Gsecond}
\end{align}
By the bootstrap hypothesis (\ref{H1}),(\ref{H2}) and (\ref{H4}) and choosing $A$ large, there holds:
\be
\ba
G(t)\lesssim & \int_{0}^{T_\star}\frac{\CC^4}{A}e^{-\frac{ct}{A^{1/2}\log A}}+\frac{1}{A}\CC^{2}||\pa_y n_{\neq}||_{L^2(\mathbb{T}^2)}^2dt \lesssim \CC^{4}.
\ea
\ee
Therefore, from (\ref{d dt pa y n 0}), we have for some $B > 0$ (using also $\norm{\partial_y n_0}_2 \lesssim \norm{n_0}_2^{1/2}\norm{\partial_{yy}n_0}_2^{1/2}$),
\be\ba
\frac{d}{dt}(||\pa_yn_0||_2^2-2G(t))\leq & -\frac{||\pa_yn_0||_2^4}{ABC_{L^2}^2}+\frac{B}{A}||n_0||_2^2||\pa_yn_0||_2^2+\frac{B}{A}||n_0-\overline{n}||_2^2||\pa_yn_0||_2^2\\
\leq&-\frac{||\pa_yn_0||_2^4}{ABC_{L^2}^2}+\frac{B}{A}C_{L^2}^2||\pa_yn_0||_2^2\\
\leq&-\frac{1}{ABC_{L^2}^2}||\pa_yn_0||_2^2(||\pa_yn_0||_2^2-2G(t)-C_{L^2}^4B^2).
\ea\ee
Integrating and applying \eqref{def:Gsecond} implies the following:
\begin{align*}
||\pa_yn_0||_2^2\leq 2G(t)+C_{L^2}^4B + \norm{\partial_y n_{in}}_2^2 \lesssim  \CC^{4} + \norm{\partial_y n_{in}}_2^2. 
\end{align*}
Hence, by choosing $C_{\dot{H}^1}^2\gg  \CC^{4} + \norm{\partial_y n_{in}}_2^2$, we complete the proof of (\ref{ctrl:ZeroMd}).

\subsection{$L^\infty$ uniform control \eqref{ctrl:Linf}} \label{sec:Linfty2D}
By the bootstrap hypothesis (\ref{H2}) and (\ref{H3}), it follows that $||n||_2^2\lesssim ||n_{in}||_{H^1}^2+C_{L^2}^2(||n_{in}||_2,M) < \infty$.
As the $L^2$ norm is subcritical for 2D Patlak-Keller-Segel, it is standard (see e.g. \cite{JagerLuckhaus92,Kowalczyk05,CalvezCarrillo06} and the references therein) that this implies a uniform-in-time $L^\infty$ bound which depends only on $\norm{n}_{L^\infty(0,T_\star;L^2)}$.
Therefore, by choosing $C_\infty$ appropriately, we have \eqref{ctrl:Linf}:
\be
||n||_{L^\infty(0,T_\star;L^\infty)} \leq 2C_\infty = 2 C_\infty(||n_{in}||_{H^1}).
\ee
This completes the proof of Proposition \ref{prop:boot2D} and hence Theorem \ref{thm:2D}.

\section{Proof of Theorem \ref{thm:3D} in the case $\mathbb{T}^3$} \label{sec:3DT3}

Next we turn to the 3D case. Heuristically, we expect the problem to be effectively $L^1$ critical with critical mass $8\pi$.
As in e.g. \cite{BlanchetEJDE06}, we will need to use the free energy to obtain such a precise control.

\subsection{Basic setting and bootstrap}
Consider the Patlak-Keller-Segel equation with advection on $\mathbb{T}^3$:
\bel\label{KSC T3}
\left\{\begin{array}{rrr}\pa_t n+u(y_1)\pa_x n+\frac{1}{A}\na\cdot(\na c n)=\frac{1}{A}\de n,\\
-\de c=n-\overline{n},\\
n(\cdot,0)=n_{in}, \end{array}\right.
\eel
where $(x,y_1,y_2)\in\mathbb{T}^3$.
We use the notation
\begin{align*}
(x,y_1,y_2)\in& \mathbb{T}\times \rr^2, \\
dy=&dy_1dy_2,\\
\na_y=&(\pa_{y_1},\pa_{y_2}),\\
\de_y=&\pa_{y_1 y_1}+\pa_{y_2 y_2}.
\end{align*}
As above, the bootstrap argument is applied to prove Theorem \ref{thm:3D}. For constants $C_{ED},C_{L^2},C_{\dot{H}^1},C_{\infty}$ determined by the proof, define $T_\star$ to be the end-point of the largest interval $[0,T_\star]$ such that the following hypotheses hold for all $T \leq T_\star$:
\begin{subequations}

(1) Nonzero mode $L_t^2\dot{H}_{x,y}^1$ estimates:

\bel\ba\label{T3 H1}
\frac{1}{A}\int_0^{T_\star}||\nabla_{x,y} n_{\neq}||_{L^2(\mathbb{T}^3)}^2dt\leq& 8||n_{in}||_2^2;\\
\ea\eel

(2) Nonzero mode enhanced dissipation estimate:
\bel\label{T3 H2}\ba
||n_{\neq}||_{L^2(\mathbb{T}^3)}^2\leq& 4C_{ED}||n_{in}||_{H^1}^2e^{-\frac{ct}{A^{1/2}\log A}}, \\
\ea\eel
where $c$ is a small number independent of $A$;

(3) Zero mode time independent estimate:
\bel\ba\label{T3 H3}
||n_0||_{L^\infty_t(0,T_\star;L^2_{y})}\leq& 4C_{L^2},\\
||\pa_y n_0||_{L^\infty_t(0,T_\star;L^2_{y})}\leq& 4C_{\dot{H}^1};\\
\ea\eel

(4) $L_t^\infty L_{x,y}^\infty$ estimate of the whole solution:
\bel\label{T3 H4}\ba
||n||_{L^\infty_t(0,T_\star;L^\infty_{x,y})}\leq& 4C_\infty.
\ea\eel
\end{subequations}
As in the two-dimensional case, we introduce the following constant:
\bel
\CC:=1+M+C_{ED}^{1/2}||n_{in}||_{H^1}+C_{L^2}+C_\infty.
\eel
Here $C_{ED}$ just depends on the properties of the shear flow $u$. $C_{L^2}$ just depends on the initial data $n_{in}$, $C_{\infty}$ depends on $n_{in}$ and $C_{L^2}$, and $C_{\dot{H}^1}$ depends on $n_{in}$, $C_{L^2}$ and $C_\infty$. 
Recall that we assume that the data is initially bounded strictly away from zero from below: 
\bel\label{T3 n_{in} lower bound}
\min_{(x,y_1,y_2)\in\Torus^3} n_{in}(x,y_1,y_2)\geq {q}>0.
\eel

As in \S\ref{sec:2D}, by local well-posedness of mild solutions, the quantities on the left-hand sides of \eqref{T3 H1}, \eqref{T3 H2}, \eqref{T3 H3}, and \eqref{T3 H4} take values continuously in time. Moreover, the inequalities are all satisfied with the $4$'s replaced by $2$'s for $t$ sufficiently small.
By the standard continuation criteria for \eqref{def:PKS}, the solution exists and remains smooth on an interval $(0,t_0]$, with $t_0 > T_\star$ such that $t_0 - T_\star$ can be taken to depend only on $\norm{n(T_\star)}_{L^2}$. 
By continuity, the following proposition shows that the solution is global and satisfies the a priori estimates \textbf{(H)} for all time. 
\begin{pro} \label{prop:3DT3}
For all $n_{in}$ and $u$, if the condition (\ref{T3 n_{in} lower bound}) and the above bootstrap hypothesis (\textbf{H}) are satisfied, there exists an $A_0(\norm{n_{in}}_{L^\infty},\norm{n_{in}}_{H^1},M,q)$ such that if $A > A_0$ then the following conclusions, referred to as (\textbf{C}), hold on the interval $[0,T_\star]$:

\begin{subequations}
(1) \bel\ba\label{T3 C1}
\frac{1}{A}\int_0^{T_\star}||\nabla_{x,y} {n}_{\neq}||_2^2dt\leq& 4||n_{in}||_2^2;\\
\ea\eel

(2) \bel\label{T3 C2}\ba
||{n}_{\neq}||_2^2\leq 2C_{ED}||n_{in}||_{H^1}^2e^{-\frac{ct}{A^{1/2}\log A}};\\
\ea\eel

(3) \bel\ba\label{T3 C3}
||{n}_0||_{L^\infty_t(0,T_\star;L^2_{y})}\leq& 2C_{L^2},\\
||\pa_y {n}_0||_{L^\infty_t(0,T_\star;L^2_{y})}\leq& 2C_{\dot{H}^1};
\ea\eel

(4) \bel\label{T3 C4}\ba
||n||_{L^\infty_t(0,T_\star;L^\infty_{x,y})}\leq& 2C_\infty.
\ea\eel
\end{subequations}
\end{pro}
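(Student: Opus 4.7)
The plan is to follow the same bootstrap scheme as Proposition \ref{prop:boot2D}, with the subcritical treatment of the zero mode in \S\ref{sec:2D} replaced by a free-energy argument adapted to the $L^1$-critical setting, in the spirit of \cite{BlanchetEJDE06}. The non-zero-mode conclusions (\ref{T3 C1}) and (\ref{T3 C2}) are obtained by repeating the hypocoercivity argument of \S\ref{sec:ED} almost verbatim: since $u=u(y_1)$ depends on only one of the $y$-variables, the spectator direction $y_2$ enters $\Phi_k$ only through an additional positive contribution $\frac{1}{A}\|\partial_{y_2}\widehat n_k\|_2^2$ in $\mathcal{N}_k$, and the 3D Sobolev/elliptic estimates used in the bounds on $L_k^\bullet$ and $NL_k^\bullet$ are the obvious analogues of their 2D counterparts. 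These terms are therefore absorbed into $\mathcal{N}_k$ for $A$ large exactly as before, using the bootstrap estimates on $n_0$ and $n$; the passage from (\ref{T3 C2}) to (\ref{T3 C1}) via a Gronwall-type inequality for $\|n_{\neq}\|_2^2$ is identical to \S\ref{sec:2D}.

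\textbf{Zero-mode estimates via free energy.} The zero mode $n_0(t,\cdot)$ on $\Torus^2$ satisfies
\begin{equation*}
\partial_t n_0 + \frac{1}{A}\nabla_y \cdot(n_0\nabla_y c_0) - \frac{1}{A}\Delta_y n_0 = -\frac{1}{A}\bigl(\nabla \cdot(n_{\neq}\nabla c_{\neq})\bigr)_0, \qquad -\Delta_y c_0 = n_0-\bar n,
\end{equation*}
which is a 2D Patlak-Keller-Segel equation forced by the non-zero modes. I would introduce the 2D free energy
\begin{equation*}
\mathcal{F}[n_0] := \int_{\Torus^2} n_0\log n_0\, dy - \frac{1}{2}\int_{\Torus^2}|\nabla_y c_0|^2\, dy,
\end{equation*}
and derive the identity
\begin{equation*}
\frac{d}{dt}\mathcal{F}[n_0] = -\frac{1}{A}\int n_0\bigl|\nabla_y(\log n_0-c_0)\bigr|^2 dy + \mathrm{Err}(t),
\end{equation*}
where $\mathrm{Err}(t)$ collects integrals linear in $n_{\neq}$ or $\nabla c_{\neq}$ obtained by testing the forcing against $\log n_0+1$ and $c_0$. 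Each such term is controlled via (\ref{T3 H2}) and has uniformly bounded time integral. Combined with the logarithmic Hardy-Littlewood-Sobolev inequality on $\Torus^2$ and the hypothesis $M<8\pi$, this yields a uniform-in-time bound on $\int n_0\log n_0\, dy$, i.e., an $L\log L$ bound on $n_0$. The positivity assumption $n_{in}\geq q>0$ plays a key role here: by a comparison principle for the advection-diffusion equation satisfied by $n_0$ (with drift $\nabla c_0$ bounded by the bootstrap and forcing with time-integrable $L^\infty$ norm), $n_0(t,\cdot)$ remains uniformly positive, so $\log n_0$ stays bounded below and $\mathcal{F}[n_0]$ controls the positive part $\int n_0(\log n_0)_+$.

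\textbf{Upgrade and main obstacle.} From the $L\log L$ bound I would run the standard $L^p$-iteration for 2D Patlak-Keller-Segel (cf.\ \cite{JagerLuckhaus92,CalvezCarrillo06,BlanchetEJDE06}), in which the sharp log-HLS with $M<8\pi$ supplies the missing subcriticality to close differential inequalities for $\|n_0\|_p^p$; the forcing from $n_{\neq}$ is absorbed via the $G(t)$ trick of \S\ref{G(t)trick}, with $G(t)$ constructed from the time-integrable quantities supplied by (\ref{T3 H1}) and (\ref{T3 H2}). This yields both the $L^2$ and $\dot H^1$ parts of (\ref{T3 C3}), and then (\ref{T3 C4}) follows from a Moser iteration as in \S\ref{sec:Linfty2D}. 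The main obstacle is that at the critical mass the dissipation $\frac{1}{A}\int n_0|\nabla_y(\log n_0-c_0)|^2$ must not be consumed in controlling $\mathrm{Err}$, since the $8\pi$ threshold in log-HLS is saturated; the sharp bound survives only because the enhanced-dissipation estimate (\ref{T3 C2}) produces an exponentially decaying prefactor on every contribution linear in $n_{\neq}$, which in turn forces the time integrals of $\mathrm{Err}$ to remain uniformly bounded on $[0,T_\star]$.
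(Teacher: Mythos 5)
Your overall scheme matches the paper's: hypocoercivity for the non-zero modes exactly as in \S\ref{sec:ED} (with the spectator direction $y_2$ adding only a harmless positive dissipation term), and a 2D free-energy plus log-HLS argument for the zero mode. You also correctly identify that the positivity hypothesis $n_{in}\geq q>0$ is what makes the zero-mode argument close. However, there are two linked gaps in how you use positivity.

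First, the mechanism by which positivity enters is misidentified. On the compact manifold $\Torus^2$ the negative part of the entropy is automatically controlled, since $x\log x\geq -1/e$ gives $\int_{\Torus^2}n_0(\log n_0)_- \,dy \geq -|\Torus^2|/e$; no lower bound on $n_0$ is needed for that. The real obstruction is in the cross term of $\frac{d}{dt}\cF[n_0]$. After integrating the forcing by parts you obtain $-\frac{1}{A}\int(\na_yc_{\neq}n_{\neq})_0\cdot(\na_y\log n_0-\na_y c_0)\,dy$, and the only way to absorb this into the weighted dissipation $-\frac{1}{A}\int n_0|\na_y\log n_0-\na_y c_0|^2\,dy$ is via Cauchy-Schwarz with weight $n_0$, which produces $\frac{1}{A}\int \frac{|(\na_yc_{\neq}n_{\neq})_0|^2}{n_0}\,dy$. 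This term cannot be bounded by \eqref{T3 H2} alone: it degenerates in regions of low density, and that is precisely where the pointwise lower bound is used, giving $\int \frac{|(\na_yc_{\neq}n_{\neq})_0|^2}{n_0}\,dy \leq \norm{1/n_0}_\infty \norm{\na_y c_{\neq}}_2^2\norm{n_{\neq}}_\infty^2 \lesssim q^{-1}e^{\bar n t/A}\CC^4 e^{-ct/(A^{1/2}\log A)}$, which is time-integrable for $A$ large. Your claim that the error ``is controlled via \eqref{T3 H2} and has uniformly bounded time integral'' skips exactly this step.

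Second, your route to the lower bound on $n_0$ is not clearly justified. You propose a comparison principle for the $n_0$ equation, treating $-\frac{1}{A}\nabla\cdot(n_{\neq}\nabla c_{\neq})_0$ as ``forcing with time-integrable $L^\infty$ norm.'' But the bootstrap hypotheses give $\na n_{\neq}\in L^2_t L^2_{x,y}$ and $n_{\neq}\in L^\infty_{t,x,y}$, not $\nabla\cdot(n_{\neq}\nabla c_{\neq})_0 \in L^1_t L^\infty_y$, so this forcing is not obviously time-integrable in $L^\infty$, and a naive minimum-principle argument on $n_0$ does not close. The paper avoids this by applying the minimum principle to the \emph{full} 3D solution $n$: at a spatial minimum of $n$ one has $\partial_x n=\nabla n=0$, $\Delta n\geq 0$, and $-n\Delta c = n(n-\bar n)\geq -\bar n\, n$, whence $\frac{d}{dt}n_{\min}\geq -\frac{\bar n}{A}n_{\min}$ and thus $n_{\min}(t)\geq q e^{-\bar n t/A}$. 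Since $n_0\geq n_{\min}$, this immediately gives the required lower bound on $n_0$ with no need to control the forcing separately. You should replace your comparison argument on $n_0$ with this cleaner argument on $n$, and use the resulting bound where it is actually needed: to control the $1/n_0$ weight arising from the weighted Cauchy-Schwarz, not the negative part of the entropy.
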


The main new difficulty in the 3D case arises in the proof of \eqref{3D C3}: even if non-zero modes could be neglected entirely, the evolution of $n_0$ would be given by the $L^1$ critical parabolic-elliptic Patlak-Keller-Segel.
In \cite{BlanchetEJDE06}, the free energy, together with the logarithmic Hardy-Littlewood-Sobolev inequality (see e.g. \cite{CarlenLoss92}), was applied to prove global existence up to the critical mass. 
Similarly, here we will estimate the 2D free energy of $n_0$ (no longer a conserved quantity) and apply the 2D logarithmic Hardy-Littlewood-Sobolev inequality on $n_0$. 
We are met with a small difficulty in estimating the effect of non-zero frequencies on the free energy in regions of low density; to help deal with this, we utilize a pointwise lower bound on the solution (See Lemma \ref{lem:lowbd} below).

\subsection{Estimate on the zero mode (\ref{T3 C3})}
The idea of the proof is to exploit the fact that the shear flow strongly damps the nonzero frequencies. Hence, even though the equation (\ref{KSC T3}) is posed on $\mathbb{T}^3$, we can approximate the evolution as the classical Keller-Segel equation in $\mathbb{T}^2$ with a rapidly decaying perturbation $(\na\cdot(\na c_{\neq}n_{\neq}))_0$ coming from the nonzero modes.  

First we derive an exponentially decreasing lower bound for $n$. 
\begin{lem} \label{lem:lowbd}
Under the bootstrap hypotheses \textbf{(H)} and \eqref{T3 n_{in} lower bound}, there holds the following pointwise lower bound on the solution for all $t \in [0,T^\ast]$ 
\bel\label{1/ n 0 bound}
\norm{\frac{1}{n_0(t)}}_{\infty}\leq\norm{\frac{1}{n(t)}}_\infty\leq q^{-1}e^{\frac{\overline{n}}{A}t}.
\eel
\end{lem}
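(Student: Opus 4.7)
The plan is to establish the second inequality by a parabolic minimum principle argument on a carefully chosen exponentially weighted version of $n$, and to deduce the first inequality from the pointwise definition of $n_0$ as an average.

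For the first inequality, I would simply note that
\[
n_0(t,y) = \frac{1}{2\pi}\int_{-\pi}^\pi n(t,x,y)\,dx \ge \min_{x\in\Torus} n(t,x,y) \ge \inf_{(x,y)} n(t,x,y),
\]
so taking reciprocals and then $L^\infty$ in $y$ gives $\norm{1/n_0(t)}_\infty \le \norm{1/n(t)}_\infty$, with no use of the equation.

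For the second inequality, which is the main content, I would rewrite \eqref{KSC T3} by expanding the drift term and using the relation $-\Delta c = n - \bar n$. That gives
\[
\partial_t n + u(y_1)\partial_x n + \tfrac{1}{A}\nabla c \cdot \nabla n - \tfrac{1}{A}\Delta n = -\tfrac{1}{A} n\, \Delta c = \tfrac{1}{A} n(n-\bar n).
\]
Since $n\ge 0$ (which follows from the usual minimum principle and the assumption $n_{in}\ge q>0$, or can itself be folded into the same argument), the right-hand side satisfies $\tfrac{1}{A} n(n-\bar n) \ge -\tfrac{\bar n}{A} n$. Then I would introduce the exponentially rescaled quantity
\[
m(t,x,y) := e^{\bar n t/A}\, n(t,x,y),
\]
and compute
\[
\partial_t m + u(y_1)\partial_x m + \tfrac{1}{A}\nabla c \cdot \nabla m - \tfrac{1}{A}\Delta m = e^{\bar n t/A}\left[\tfrac{1}{A} n(n-\bar n) + \tfrac{\bar n}{A} n\right] = \tfrac{e^{\bar n t/A}}{A}\, n^2 \ge 0.
\]
Thus $m$ is a supersolution of a linear transport-diffusion equation with smooth coefficients (smoothness of $\nabla c$ and $u$ is assured by the bootstrap hypotheses \textbf{(H)} and standard higher-regularity theory on the local existence interval).

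Applying the parabolic minimum principle on $\Torus^3$ to $m$, I conclude $m(t,\cdot)\ge \min m(0,\cdot) = \min n_{in} \ge q$, so $n(t,x,y) \ge q\, e^{-\bar n t/A}$ pointwise, and hence $\norm{1/n(t)}_\infty \le q^{-1} e^{\bar n t/A}$. I do not foresee a serious obstacle here: the only subtlety is verifying enough regularity of $\nabla c$ to justify the classical minimum principle, which follows from the bootstrap smoothness of $n$ and elliptic regularity for $-\Delta c = n - \bar n$; if one wanted to avoid that, one could equivalently test the equation satisfied by $(m-q)_-$ (the negative part) against itself, integrate by parts, use $\nabla\cdot(u\partial_x) = 0$ and $\nabla\cdot(\nabla c) = -(n-\bar n)$ appropriately, and close a Gr\"onwall-type estimate showing $(m-q)_-\equiv 0$.
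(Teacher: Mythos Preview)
Your proposal is correct and takes essentially the same approach as the paper: both arguments exploit the parabolic minimum principle together with the lower bound $n(n-\bar n)\ge -\bar n\, n$. The only difference is cosmetic—the paper evaluates the equation directly at the spatial minimum to obtain $\frac{d}{dt}n_{\min}\ge -\tfrac{\bar n}{A}n_{\min}$, whereas you package the same computation into the exponential substitution $m=e^{\bar n t/A}n$ and invoke the comparison principle for a supersolution; your version is arguably cleaner in that it sidesteps the (minor) issue of differentiating $t\mapsto n_{\min}(t)$.
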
 
\begin{proof} 
The equation (\ref{KSC T3}) implies that at the point $(x_{\min}(t),y_{\min}(t))$ where the minimum in space of the solution is achieved, the following inequality is satisfied:
\be\ba
(\pa_t n)(x_{\min},y_{\min})=&\frac{1}{A}(\de n)(x_{\min},y_{\min})+\frac{1}{A}(n(x_{\min},y_{\min})-\overline{n})n(x_{\min},y_{\min})\\
\geq&-\frac{1}{A}\overline{n}n(x_{\min},y_{\min}),
\ea\ee
which implies that
\be
\frac{d}{dt}n_{\min}(t)\geq -\frac{1}{A}\overline{n}n_{\min}(t).
\ee
Combining this differential inequality with (\ref{T3 n_{in} lower bound}), this yields
\bel\label{n min t}
n_{\min}(t)\geq qe^{-\frac{\overline{n}}{A}t},
\eel
which completes the lemma. 
\end{proof}

Next, we study the classical 2D free energy of $n_0$ on $\mathbb{T}^2$:
\be
\cF[n_0]=\int_{\mathbb{T}^2} n_0\log n_0 -\frac{1}{2}(n_0-\overline{n})c_0 dy.
\ee
\begin{lem} 
Under the bootstrap hypotheses \textbf{(H)} and \eqref{T3 n_{in} lower bound}, for $A$ sufficiently large, there holds the following uniform bound on $t \in [0,T^\ast]$, 
\bel\label{T3 E bound}
\cF[n(t)] \leq 2\cF[n_{in}].
\eel
\end{lem}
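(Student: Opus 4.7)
The plan is to differentiate the 2D free energy $\cF[n_0(t)]$ in time, recognize the classical Keller--Segel dissipation, and control the coupling error from the non-zero-in-$x$ modes via the enhanced dissipation bootstrap \eqref{T3 H2} together with the pointwise lower bound on $n_0$ from Lemma \ref{lem:lowbd}; here I interpret $\cF[n(t)]$ in the lemma as $\cF[n_0(t)]$ (and similarly for $n_{in}$). First, since $-\de_y c_0 = n_0-\overline n$ on $\Torus^2$ and $(-\de_y)^{-1}$ is self-adjoint on the zero-mean sector, the functional derivative of $\cF$ is $\log n_0 - c_0$, modulo a constant that drops out of $\int \pa_t n_0\,dy=0$. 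Substituting the equation for the zero mode
\[
\pa_t n_0 = \tfrac{1}{A}\de_y n_0 - \tfrac{1}{A}\na_y\cdot(n_0\na_y c_0) - \tfrac{1}{A}\na_y\cdot(n_{\neq}\na_y c_{\neq})_0
\]
(the $\pa_x$-divergence of the quadratic term has zero $x$-average) and integrating by parts, I would arrive at the decomposition
\[
\frac{d}{dt}\cF[n_0] = -\frac{1}{A}\int_{\Torus^2}\frac{|\na_y n_0 - n_0\na_y c_0|^2}{n_0}\,dy + \frac{1}{A}\int_{\Torus^2}\left(\frac{\na_y n_0}{n_0} - \na_y c_0\right)\cdot(n_{\neq}\na_y c_{\neq})_0\,dy,
\]
where the first term is the standard non-positive Fokker--Planck-type dissipation and the second is the error.

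Next, I would apply Cauchy--Schwarz with weight $n_0\,dy$ followed by Young's inequality to absorb half of the dissipation and leave
\[
\frac{1}{2A}\int_{\Torus^2}\frac{|(n_{\neq}\na_y c_{\neq})_0|^2}{n_0}\,dy \leq \frac{\|n_0^{-1}\|_{L^\infty}}{2A}\|\na c_{\neq}\|_{L^\infty(\Torus^3)}^2\|n_{\neq}\|_{L^2(\Torus^3)}^2.
\]
Lemma \ref{lem:lowbd} provides $\|n_0^{-1}\|_{L^\infty}\leq q^{-1}e^{\overline n t/A}$; elliptic regularity combined with \eqref{T3 H4} bounds $\|\na c_{\neq}\|_{L^\infty}\lesssim \CC$; and the enhanced dissipation \eqref{T3 H2} yields $\|n_{\neq}\|_{L^2}^2\lesssim \|n_{in}\|_{H^1}^2 e^{-ct/(A^{1/2}\log A)}$. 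Choosing $A$ sufficiently large so that $\overline n/A \leq c/(2A^{1/2}\log A)$, the combined exponent is negative and the total error over $[0,T^\ast]$ is controlled by $O(A^{-1/2}\log A)$, uniformly in $T^\ast$.

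Consequently, $\cF[n_0(t)] \leq \cF[n_0(0)] + O(A^{-1/2}\log A)$. A standard Jensen argument (convexity of $x\log x$ together with $\|\na_y c_0\|_{L^2(\Torus^2)}^2 \leq \|\na c\|_{L^2(\Torus^3)}^2$ by Plancherel in $x$) yields $\cF[n_0(0)] \leq \cF[n_{in}]$, so choosing $A$ large enough that the accumulated error is dominated by $\cF[n_{in}]$ completes the proof of $\cF[n(t)]\leq 2\cF[n_{in}]$. The main obstacle is the $n_0^{-1}$ singularity appearing in the weighted error: without Lemma \ref{lem:lowbd} this term cannot be closed in regions where $n_0$ is small, and the delicate point is that the lower bound degrades like $e^{\overline n t/A}$ while the enhanced dissipation decays like $e^{-ct/(A^{1/2}\log A)}$, so only for $A$ large enough does the decay dominate and allow the estimate to close.
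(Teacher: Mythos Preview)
Your proof is correct and follows essentially the same route as the paper: differentiate $\cF[n_0]$, identify the non-positive dissipation $-\frac{1}{A}\int n_0|\na_y\log n_0-\na_y c_0|^2\,dy$, apply weighted Young's inequality to leave the error $\frac{1}{2A}\int n_0^{-1}|(\na_y c_{\neq}n_{\neq})_0|^2\,dy$, and close via Lemma~\ref{lem:lowbd} combined with the enhanced dissipation \eqref{T3 H2} so that the net exponential rate is negative for $A$ large. The only cosmetic differences are that the paper bounds the error by $\|\na_y c_{\neq}\|_{L^2}^2\|n_{\neq}\|_{L^\infty}^2$ rather than your $\|\na c_{\neq}\|_{L^\infty}^2\|n_{\neq}\|_{L^2}^2$, and your final Jensen step is unnecessary since $\cF[n_{in}]$ in the statement already denotes $\cF[(n_{in})_0]$.
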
 
\begin{proof} 
By applying the hypothesis (\ref{T3 H2},\ref{T3 H4}), Minkowski's integral inequality, and (\ref{1/ n 0 bound}), the time derivative of $\cF[n_0]$ can be estimated as follows
\bel\label{T3 d dt E}\ba
\frac{d}{dt}\cF[n_0]=&-\frac{1}{A}\int n_0|\na_y \log n_0-\na_y c_0|^2dy-\frac{1}{A}\int(\na_yc_{\neq} n_{\neq})_0\cdot(\na_y\log n_0-\na_y c_0)dy\\
\leq&-\frac{1}{2A}\int n_0|\na_y \log n_0-\na_y c_0|^2dy+\frac{1}{2A}\int\frac{|(\na_yc_{\neq} n_{\neq})_0|^2}{n_0}dy\\
\leq&-\frac{1}{2A}\int n_0|\na_y \log n_0-\na_y c_0|^2dy+\frac{1}{2A}\norm{\frac{1}{n_0}}_{\infty}\norm{\na_yc_{\neq}}_{L^2(\mathbb{T}^3)}^2 \norm{n_{\neq}}_{L^\infty(\mathbb{T}^3)}^2\\
\lesssim&-\frac{1}{2A}\int n_0|\na_y \log n_0-\na_y c_0|^2dy+\frac{\CC^4}{2Aq}e^{\left(\frac{\overline{n}}{A}-\frac{c}{A^{1/2}\log A}\right)t}.
\ea\eel
Note that for $A$ sufficiently large yields: 
\bel
\int_0^\infty\frac{\CC^4}{2Aq}e^{\frac{\overline{n}}{A}t-\frac{ct}{A^{1/2}\log A}}dt\leq \int_0^\infty\frac{\CC^4}{2Aq}e^{-\frac{ct}{2A^{1/2}\log A}}dt\lesssim \frac{\CC^4}{2Aq}A^{1/2}\log A.  \label{T3 d dt E 1}
\eel
Combining (\ref{T3 d dt E}) and (\ref{T3 d dt E 1}) yields the uniform time \eqref{T3 E bound}. 
\end{proof} 

Next, we use (\ref{T3 E bound}) to get a bound on the entropy:
\begin{lem} \label{lem:nlognbdT2} 
If (\ref{T3 E bound}) holds and $A$ is chosen large enough, there exists a constant $C_{L\log L}(n_{in})$ such that 
\bel \int n_0\log^+ n_0dy \leq C_{L\log L}(n_{in}). \eel
\end{lem}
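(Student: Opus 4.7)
The plan is to combine the free energy bound \eqref{T3 E bound} with a logarithmic Hardy--Littlewood--Sobolev (log HLS) inequality on $\mathbb{T}^2$, in the spirit of \cite{BlanchetEJDE06}. Unfolding the definition of $\cF[n_0]$ on $\mathbb{T}^2$,
\[
\int_{\mathbb{T}^2} n_0 \log n_0 \, dy = \cF[n_0] + \tfrac{1}{2}\int (n_0 - \overline{n}) c_0 \, dy \leq 2\cF[n_{in}] + \tfrac{1}{2}\int (n_0 - \overline{n}) c_0 \, dy,
\]
so it suffices to dominate the potential term by $\theta \int n_0 \log n_0 + C$ for some $\theta<1$ and constant $C$ depending only on the data.

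To estimate the potential term, I would decompose the periodic Green's function for $-\Delta$ (restricted to mean-zero data) as $G(y,y') = -\tfrac{1}{2\pi}\log|y-y'| + H(y,y')$ with $H \in L^\infty(\mathbb{T}^2 \times \mathbb{T}^2)$, and substitute $c_0(y) = \int G(y,y')(n_0(y') - \overline{n})\,dy'$. All contributions from $H$, together with all cross-terms involving $\overline{n}$, are controlled by a constant $C(M)$, since $\int_{\mathbb{T}^2}\log|y-y'|\,dy'$ is uniformly bounded in $y$ and mass is conserved. The remaining singular piece $-\tfrac{1}{4\pi}\iint n_0(y)n_0(y')\log|y-y'|\,dy\,dy'$ is exactly what the Carlen--Loss log HLS inequality \cite{CarlenLoss92} controls: for nonnegative $n_0$ with $M_0 := \|n_0\|_{L^1(\mathbb{T}^2)}$,
\[
-\tfrac{1}{4\pi}\iint n_0(y) n_0(y') \log|y-y'|\, dy\, dy' \leq \frac{M_0}{8\pi}\int n_0 \log n_0\, dy + C(M_0).
\]
Substituting back yields
\[
\left(1 - \frac{M_0}{8\pi}\right)\int n_0 \log n_0\, dy \leq 2\cF[n_{in}] + C(M),
\]
and the hypothesis $M<8\pi$ (so $M_0 = M/(2\pi) < 4 < 8\pi$) makes the prefactor strictly positive, producing an absolute upper bound on $\int n_0 \log n_0$ depending only on the initial data.

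To promote this to the $L\log L$ bound, I would write
\[
\int n_0 \log^+ n_0 \,dy = \int n_0 \log n_0 \,dy - \int_{\{n_0 < 1\}} n_0 \log n_0 \,dy
\]
and use $|x \log x| \leq 1/e$ on $[0,1]$ to dominate the negative piece by $|\mathbb{T}^2|/e$, giving the constant $C_{L\log L}(n_{in})$. The main technical subtlety is the passage of the Carlen--Loss inequality from $\mathbb{R}^2$ to the torus together with the careful handling of the mean-subtraction $n_0-\overline{n}$; because the smooth correction $H$ and the $\overline{n}$ terms enter only through $L^\infty$ and conserved-mass bounds they contribute only additive constants. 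The strictness of $M < 8\pi$ is essential here: at the critical threshold the coefficient $1 - M_0/(8\pi)$ could vanish and the rearrangement would collapse, which is precisely the reason the subcritical assumption cannot be dropped at this step.
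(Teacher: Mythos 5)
Your proof is correct and follows the same overall strategy as the paper: use the uniform bound on the approximate free energy, rewrite the interaction term as a singular logarithmic integral plus bounded errors, and close with a logarithmic Hardy--Littlewood--Sobolev inequality and the strict subcriticality $M<8\pi$. The route through the details differs slightly, and your version is arguably cleaner: you invoke the standard decomposition of the torus Green's function $G(y,y')=-\tfrac{1}{2\pi}\log|y-y'|+H(y,y')$ with $H\in L^\infty$ and appeal to the Carlen--Loss inequality, whereas the paper manufactures this decomposition by hand via a smooth cut-off $\varphi_y$ supported on $B(y,1/4)$, tracks the resulting boundary/commutator error terms (bounded using $\|c_0\|_{L^1}\lesssim M$), and then appeals to the compact-manifold version of log-HLS from \cite{SW}, which is phrased in terms of the Riemannian distance $d(y,z)$. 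The two formulations of log-HLS are equivalent up to additive constants on $\Torus^2$ since $|y-y'|$ and $d(y,y')$ are comparable on a fundamental domain, so either can be used. Your decomposition buys a shorter write-up at the cost of invoking the Green's-function regularity as known; the paper's cut-off construction is self-contained but bookkeeps more error terms. Both correctly isolate $\tfrac{1}{4\pi}\iint n_0 n_0\log$ from the cross terms involving $\overline{n}$ and the bounded kernel, both convert $\int n_0\log n_0$ to $\int n_0\log^+n_0$ using the lower bound $x\log x\geq -1/e$, and both use $M<8\pi$ to keep $1-M/(8\pi)$ bounded away from zero (you also note explicitly that the mass of $n_0$ is $M/(2\pi)$ rather than $M$, a normalization detail the paper elides).
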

\begin{proof}
The following logarithmic Hardy-Littlewood-Sobolev inequality on a compact manifold is needed: 
\begin{thm}\cite{SW} 
Let $\mathcal{M}$ be a two-dimensional, Riemannian, compact manifold. For all $M > 0$, there exists a constant $C(M)$ such that for all non-negative functions $f \in L^1(\mathcal{M})$ such that $f \log f \in L^1$, if $\int_{\mathcal{M}} f dx = M$, then
\bel\label{log HLS T2}
\int_{\mathcal{M}}f \log fdx+\frac{2}{M}\iint_{\mathcal{M}\times\mathcal{M}} f(x) f(y) \log d(x,y) dxdy\geq -C(M),
\eel
where $d(x,y)$ is the distance on the Riemannian manifold.
\end{thm}
Let $y \in \Torus^2$ be fixed. Define the cut-off function $\varphi_y(z)\in C^\infty$ such that
\be\ba
supp(\varphi_y)=&B(y,1/4),\\
\varphi_y(z)\equiv& 1,\forall z\in B(y,1/8),\\
supp(\na \varphi_y(z))\subset& \overline{B}(y,1/4)\backslash B(y,1/8).
\ea\ee
By extending $n_0(z)$ and $c_0(z)$ periodically to $\rr^2$, we can rewrite the equation $-\de c_0=n_0-\overline{n}$ on $\mathbb{T}^2$ such that it is posed on $\rr^2$:
\be
-\de_z (\varphi_y(z)c_0(z))=(n_0(z)-\overline{n})\varphi_y(z)-2\na_z \varphi_y(z)\cdot\na_z c_0(z)-\de_z\varphi_y(z)c_0(z).
\ee
Using the fundamental solution of the Laplacian on $\rr^2$:
\be\ba
c_0(y)=&c_0(y)\varphi_y(y)\\
=&-\frac{1}{2\pi}\int_{\rr^2}\log|y-z|\bigg((n_0(z)-\overline{n})\varphi_y(z)-2\na_z \varphi_y(z)\cdot\na_z c_0(z)-\de_z\varphi_y(z)c_0(z)\bigg)dz\\
=&-\frac{1}{2\pi}\int_{|y-z|\leq \frac{1}{4}}\log|y-z|(n_0(z)-\overline{n})\varphi_y(z)dz-\frac{1}{\pi}\int_{|y-z|\leq \frac{1}{4}}\na_z\cdot(\log|y-z|\na_z \varphi_y(z)) c_0(z)dz\\
&+\frac{1}{2\pi}\int_{|y-z|\leq \frac{1}{4}}\log|y-z|\de_z\varphi_y(z)c_0(z)dz.
\ea\ee
Due to the support of $\varphi_y$, we can identify the above with an analogous integral on $\mathbb{T}^2$ with $\abs{y-z}$ replaced by $d(y,z)$.  
Therefore, we have the following estimate on the interaction energy,
\be\ba
-&\frac{1}{2}\int\displaylimits_{\mathbb{T}^2}(n_0(y)-\overline{n})c(y) dy\\
=&\frac{1}{4\pi}\iint\displaylimits_{\substack{\mathbb{T}^2\times\mathbb{T}^2\\d(y,z)\leq \frac{1}{4}}}\log d(y,z)(n_0(y)-\overline{n})(n_0(z)-\overline{n})\varphi_y(z)dzdy+\frac{1}{2\pi}\iint\displaylimits_{\substack{\mathbb{T}^2\times\mathbb{T}^2\\ \frac{1}{8}\leq d(y,z)\leq \frac{1}{4}}}(n_0(y)-\overline{n})\na_z\cdot(\log d(y,z)\na_z \varphi_y(z)) c_0(z)dzdy\\
&-\frac{1}{4\pi}\iint\displaylimits_{\substack{\mathbb{T}^2\times\mathbb{T}^2\\\frac{1}{8}\leq d(y,z)\leq \frac{1}{4}}}(n_0(y)-\overline{n})\log d(y,z)\de_z\varphi_y(z)c_0(z)dzdy\\
=&\frac{1}{4\pi}\iint\displaylimits_{d(y,z)\leq \frac{1}{8}}\log d(y,z)(n_0(y)-\overline{n})(n_0(z)-\overline{n})dzdy+\frac{1}{4\pi}\iint\displaylimits_{\frac{1}{8}\leq d(y,z)\leq \frac{1}{4}}\log d(y,z)(n_0(y)-\overline{n})(n_0(z)-\overline{n})\varphi_y(z)dzdy\\
&+\frac{1}{2\pi}\iint\displaylimits_{\frac{1}{8}\leq d(y,z)\leq \frac{1}{4}}(n_0(y)-\overline{n})\na_z\cdot(\log d(y,z)\na_z \varphi_y(z)) c_0(z)dzdy-\frac{1}{4\pi}\iint\displaylimits_{\frac{1}{8}\leq d(y,z)\leq \frac{1}{4}}(n_0(y)-\overline{n})\log d(y,z)\de_z\varphi_y(z)c_0(z)dzdy\\
=&\frac{1}{4\pi}\iint\displaylimits_{\mathbb{T}^2\times\mathbb{T}^2}\log d(y,z)n_0(y)n_0(z)dzdy-\frac{1}{4\pi}\iint\displaylimits_{d(y,z)> \frac{1}{8}}\log d(y,z)n_0(y)n_0(z)dzdy\\
&-\frac{1}{2\pi}\overline{n}\iint\displaylimits_{d(y,z)\leq \frac{1}{8}}\log d(y,z)n_0(y)dzdy+\frac{1}{4\pi}\overline{n}^2\iint\displaylimits_{d(y,z)\leq \frac{1}{8}}\log d(y,z)dzdy\\
&+\frac{1}{4\pi}\iint\displaylimits_{\frac{1}{8}\leq d(y,z)\leq \frac{1}{4}}\log d(y,z)(n_0(y)-\overline{n})(n_0(z)-\overline{n})\varphi_y(z)dzdy\\
&+\frac{1}{2\pi}\iint\displaylimits_{\frac{1}{8}\leq d(y,z)\leq \frac{1}{4}}(n_0(y)-\overline{n})\na_z\cdot(\log d(y,z)\na_z \varphi_y(z)) c_0(z)dzdy-\frac{1}{4\pi}\iint\displaylimits_{\frac{1}{8}\leq d(y,z)\leq \frac{1}{4}}(n_0(y)-\overline{n})\log d(y,z)\de_z\varphi_y(z)c_0(z)dzdy.
\ea\ee
The 2nd, 3rd, 4th, 5th terms in the last line are bounded below by $-BM^2$ for some constant $B > 0$. 
The 6th and 7th terms are bounded below by $-B M ||c_0||_{L^1}$ for some constant $B > 0$, using the fact that $\na_z\cdot(\log|y-z|\na_z \varphi_y(z))$ and $\log|y-z|\de_z\varphi_y(z)$ are bounded in the region $\frac{1}{8}\leq|y-z|\leq \frac{1}{4}$. 
Denoting $K(z)$ to be the fundamental solution of the Laplacian on $\mathbb{T}^2$, by Young's inequality, we have 
\be\ba
||c_0||_{L^1(\mathbb{T}^2)}=||K\ast (n_0-\overline{n})||_{L^{1}(\mathbb{T}^2)}\leq||K||_{L^1(\mathbb{T}^2)}||n_0-\overline{n}||_{L^{1}(\mathbb{T}^2)}\lesssim M.
\ea\ee
The calculation above hence implies the following for some constant $B>0$,
\be
 -\frac{1}{2}\int(n_0-\overline{n})(-\de)^{-1}(n_0-\overline{n}) dy\geq\frac{1}{4\pi}\iint_{\mathbb{T}^2\times\mathbb{T}^2}\log d(z,y)n_0(z)n_0(y) dzdy-BM^2.
\ee
Combining this estimate with (\ref{T3 E bound}) yields
\be\ba
 2\cF[n_{in}]\geq& \left(1- \frac{M}{8\pi}\right)\int_{\mathbb{T}^2} n_0\log n_0dy+ \frac{M}{8\pi} \left( \int_{\mathbb{T}^2} n_0\log n_0dy +\frac{2}{M}\iint_{\mathbb{T}^2\times\mathbb{T}^2}n_0(z)\log d(z,y)n_0(y) dzdy\right)-BM^2. 
\ea\ee
Applying (\ref{log HLS T2}) in the above estimate, we obtain
\be\ba
 2\cF[n_{in}]\geq&\left(1-\frac{M}{8\pi}\right)\int_{\mathbb{T}^2} n_0\log n_0dy-C(M)-BM^2, 
\ea\ee
which results in
\be\ba
\int_{\mathbb{T}^2} n_0\log n_0dy\leq \frac{ 2\cF[n_{in}]+C(M)+BM^2}{1-\frac{M}{8\pi}}. 
\ea\ee
As  $x\log x$ is bounded below, this implies the following for a suitable constant $C_{L \log L}$ depending only on the initial data due to $y \in \mathbb T^2$: 
\be
\int_{\mathbb{T}^2} n_0\log ^+n_0 dy\leq C_{L\log L}(n_{in}) < \infty. 
\ee
This completes the proof of the lemma. 
\end{proof}

\subsection{Enhanced dissipation estimate, (\ref{T3 C2})}
There are only a few differences with \S\ref{sec:ED}, which we focus on below. 
Analogous to \S\ref{sec:ED}, we define the energy $\Phi[n]$ on the torus $\Torus^3$ as follows:
\bel\label{T3 Phi k}
\Phi_k[n(t)]=||\widehat{n}_k(t)||_2^2+||\sqrt{\al}\pay \widehat{n}_k(t)||_2^2+2kRe\lan i\beta u' \widehat{n}_k(t),\pay\widehat{n}_k (t)\ran+|k|^2||\sqrt{\gamma}u' \widehat{n}_k(t)||_2^2;
\eel
\bel\label{T3 Phi}
\Phi[n(t)]=\sum_{k\neq 0} \Phi_k[n(t)] =||n_{\neq}(t)||_{2}^2+||\sqrt{\al}\pay n_{\neq}(t)||_2^2+2\lan \beta u' \pa_xn_{\neq}(t),\pay n_{\neq}
(t)\ran+||\sqrt{\gamma}u'\pa_x n_{\neq}(t)||_2^2.
\eel
Here $\alpha, \beta$, and $\gamma$ are chosen as in \eqref{def:abg}.
Analogously, we have
\begin{align}
\Phi_k[n] \approx ||\widehat{n}_k||_2^2+||\sqrt{\al}\pay \widehat{n}_k||_2^2+|k|^2||\sqrt{\gamma}u'\widehat{n}_k||_2^2,
\end{align}
and hence
\begin{align}
\norm{\widehat{n}_k}_{2}^2 + A^{-1/2}\abs{k}^{-1/2}\norm{\pay \widehat{n}_k}_{2}^2 \lesssim \Phi_k[n] \lesssim \norm{\widehat{n}_k}_{2}^2 +  \abs{k}^{1/2} A^{1/2} \norm{\widehat{n}_k}_2^2 + A^{-1/2}\abs{k}^{-1/2}\norm{\pay \wh{n}_k}_2^2. \label{T3 neq:PhiEquiv}
\end{align}
Our goal in this section is to prove the following proposition:
\begin{pro}\label{T3 C2pro}
There exists a small constant $c>0$ depending only on $u$ such that, under the bootstrap hypotheses and for $A$  sufficiently large depending only on $u$, $\norm{n_{in}}_{H^1}$ and $\norm{n_{in}}_\infty$, there holds
\bel\label{T3 Phitestimate}
\frac{d}{dt}\Phi[n(t)]\leq -\frac{c}{A^{1/2}}\Phi[n(t)].
\eel
By \eqref{ineq:PhiEquiv}, it follows that
\begin{align}
\norm{n_{\neq}}_{L^2}^2 \leq \Phi(0) e^{-cA^{-1/2}t} \lesssim A^{1/2}\norm{n_{in}}_{H^1}e^{-cA^{-1/2}t}.
\end{align}
\end{pro}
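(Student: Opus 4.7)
The plan is to mirror the proof of Proposition \ref{C2pro} from the two-dimensional case, focusing only on the modifications forced by the three-dimensional geometry. First, I would compute $\frac{d}{dt}\Phi_k[n(t)]$ from the equation for $\wh n_k$ and obtain a decomposition
$$
\frac{d}{dt}\Phi_k[n(t)] = \mathcal{N}_k + \big\{L_k^1 + L_k^\alpha + L_k^\beta + L_k^\gamma\big\} + \big\{NL_k^1 + NL_k^\alpha + NL_k^\beta + NL_k^\gamma\big\}
$$
in complete analogy with Proposition \ref{thm d/dt Phi k}. Since the shear $u(y_1)$ depends only on $y_1$, the hypocoercivity analysis of \cite{BCZ15} applies to the passive scalar equation on $\Torus^3$ without modification: the $y_2$ direction simply contributes additional heat dissipation $-A^{-1}\norm{\partial_{y_2}\wh n_k}_2^2$ as a bonus negative term. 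Hence $\mathcal{N}_k$ retains the structure it had in \eqref{ddtPhik}, augmented by negative $\partial_{y_2}$-derivative contributions.

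Second, I would estimate the $L$ terms, which couple $\wh n_k$ to the zero-mode quantities $\bar n$, $n_0-\bar n$, $\na_y c_0$ and $\na_y n_0$. The key structural observation is that $c_0$ solves the purely $y$-dependent elliptic equation $-\Delta_y c_0 = n_0 - \bar n$ on $\Torus^2$, so the two-dimensional elliptic lemmas (Lemmas \ref{lem 2} and \ref{lem 3}) apply unchanged to $c_0$. Combined with the bootstrap hypotheses \eqref{T3 H3}--\eqref{T3 H4} controlling $\norm{n_0}_{L^\infty}$, $\norm{n_0-\bar n}_{L^2}$, and $\norm{\pa_y n_0}_{L^2}$, each of $L_k^1, L_k^\alpha, L_k^\beta, L_k^\gamma$ admits the same fine decomposition as in Section \ref{sec:ED}, and the same sequence of Young-type splits with a large constant $B$ followed by $A$ large yields $L_k^1+L_k^\alpha+L_k^\beta+L_k^\gamma \leq -\tfrac14\mathcal{N}_k$.

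Third, I would estimate the $NL$ terms by summing over $k\neq 0$ and reinterpreting each sum as an integration-by-parts expression involving $n_{\neq}$ and $\na c_{\neq}$, exactly as in Section \ref{sec:NLED}. The difference from 2D is that $-\Delta c_{\neq} = n_{\neq}$ is now posed on $\Torus^3$, so $\norm{\na c_{\neq}}_\infty$ and $\norm{\pa_x\na c_{\neq}}_2$ must be obtained from the 3D version of the elliptic estimates; crucially, the $L^\infty$ bootstrap \eqref{T3 H4} supplies the extra regularity needed to get $\norm{\na c_{\neq}}_\infty \lesssim \CC$. With this input, the Gagliardo--Nirenberg interpolations used for $NL_k^\gamma$ in Section \ref{sec:NLED} go through in 3D with slightly adjusted Sobolev exponents, and the simpler bounds on $NL_k^1, NL_k^\alpha, NL_k^\beta$ are preserved as well.

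The main obstacle is the $NL_k^\gamma$ estimate, where the weight $\gamma \sim A^{1/2}\abs{k}^{-3/2}$ is large and forces a delicate interpolation between $L^2$ and $L^\infty$; in 3D the endpoint Sobolev embeddings $H^1\hookrightarrow L^p$ are weaker than in 2D, so one must lean more heavily on the $L^\infty$ control from \eqref{T3 H4}. Once every $L$ and $NL$ contribution is absorbed into $\mathcal{N}_k$, summing $\Phi_k$ over $k\neq 0$ together with \eqref{T3 neq:PhiEquiv} yields $\frac{d}{dt}\Phi[n(t)] \leq -cA^{-1/2}\Phi[n(t)]$ for $A$ sufficiently large depending only on $u$, $\norm{n_{in}}_{H^1}$, and $\norm{n_{in}}_\infty$. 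Integrating in time and using $\Phi(0) \lesssim A^{1/2}\norm{n_{in}}_{H^1}^2$ closes the proposition and, combined with a 3D analogue of Proposition \ref{C2pro0} to handle the initial time layer, delivers the bootstrap conclusion \eqref{T3 C2}.
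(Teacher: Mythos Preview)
Your overall strategy matches the paper's proof: compute $\frac{d}{dt}\Phi_k$ via the three-dimensional analogue of Proposition~\ref{thm d/dt Phi k}, absorb the $L$ terms mode-by-mode into $\mathcal{N}_k$, and absorb the $NL$ terms after summing in $k$, exactly as in \S\ref{sec:ED}--\S\ref{sec:NLED}.

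There is, however, one concrete misstep in your description. You write that ``the two-dimensional elliptic lemmas (Lemmas~\ref{lem 2} and~\ref{lem 3}) apply unchanged to $c_0$.'' They do not. In the $\Torus^3$ setting, $y=(y_1,y_2)\in\Torus^2$, so $c_0$ is a function on $\Torus^2$ and $\wh c_k$ is a function on $\Torus^2$; Lemma~\ref{lem 3} is a one-dimensional estimate (it is literally the fundamental theorem of calculus on $\Torus$), and Lemma~\ref{lem 2} is likewise specific to $y\in\Torus$. The paper instead uses the three-dimensional estimates \eqref{mode by mode estimates 3D} and \eqref{elliptic estimate appendix 3D}. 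The relevant change is that the mode-by-mode bound now reads
\[
\norm{\na_y\wh c_k}_{L^\infty(\Torus^2)}\lesssim \norm{\wh n_k}_{L^2}^{1/2}\norm{\na_y\wh n_k}_{L^2}^{1/2},
\]
which costs half a derivative of $\wh n_k$ compared to the 2D Lemma~\ref{lem 2}. This extra factor appears in $L^\al_{k,3}$, $L^\beta_{k,13}$, and $L^\gamma_{k,3}$, and in each case is split by Young's inequality so that the $\norm{\na_y\wh n_k}_2^2$ piece is absorbed by the dissipation in $\mathcal{N}_k$. Similarly, $L^\beta_{k,22}$ no longer vanishes by antisymmetry (only its $\pa_{y_1}$ component does), so it must be estimated like $L^\beta_{k,122}$. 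These are the only genuine modifications; once they are made, your sketch is correct. Your concern about $NL^\gamma$ is somewhat overstated: the paper in fact treats the $NL$ terms as ``essentially the same'' as in \S\ref{sec:NLED}, relying on \eqref{elliptic estimate appendix 3D} for $\norm{\na c_{\neq}}_\infty$.
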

\begin{rmk}
Same as in the proof of Theorem \ref{thm:2D}, Proposition \ref{T3 C2pro} implies \eqref{T3 C2}.
\end{rmk}
On $\Torus^3$, the analogue of estimate \eqref{ddtPhik} holds.

\begin{pro}\label{T3 thm d/dt Phi k}
For $\tilde{\epsilon}$ sufficiently small depending only on $u$, there holds, 
\begin{align}
\frac{d}{dt}\Phi_k[n](t)\leq&\bigg\{-\frac{\tilde{\ep}}{2} \frac{|k|^{1/2}}{A^{1/2}}||\widehat{n}_k||_2^2-\frac{\tilde{\ep}}{2}\frac{|k|^{1/2}}{A^{1/2}}||\sqrt{\al}\pay \widehat{n}_k||_2^2-\frac{\tilde{\ep}}{2}\frac{|k|^{5/2}}{A^{1/2}}||\sqrt{\gamma}u'\widehat{n}_k||_2^2 -\frac{1}{4A}||\na_y\wh{n}_k||_2^2\nonumber\\
&-\frac{1}{2}|k|^2||\sqrt{\beta}u' \wh{n}_k||_2^2-\frac{1}{2A}|k|^2||\wh{n}_k||_2^2-\frac{1}{4A}||\sqrt{\al}\pay\na_y\wh{n}_k||_2^2\nonumber\\
&-\frac{1}{4A}|k|^4||\sqrt{\gamma}u' \wh{n}_k||_2^2-\frac{1}{4A}|k|^2||\sqrt{\gamma}u'\na_y\wh{n}_k||_2^2\bigg\}\nonumber\\
&+\bigg\{2Re\lan -L_k ,\widehat{n}_k\ran
-2Re\lan \al \pa^2_{y_1}\widehat{n}_k,-L_k\ran-2kRe[\lan i\beta u'L_k,\pay\widehat{n}_k\ran+\lan i\beta u'\widehat{n}_k,\pay L_k\ran]\nonumber\\
&+2|k|^2Re\lan \gamma (u')^2\widehat{n}_k,-L_k\ran\bigg\}\nonumber\\
&+\bigg\{-2Re\lan NL_k,\widehat{n}_k\ran
+2Re\lan  \al \pa^2_{y_1}\widehat{n}_k,NL_k\ran-2kRe[\lan i\beta u'NL_k,\pay\widehat{n}_k\ran+\lan i\beta u'\widehat{n}_k,\pay NL_k\ran]\nonumber\\
&-2|k|^2Re\lan \gamma (u')^2\widehat{n}_k,NL_k\ran\bigg\}\nonumber\\
=:&\mathcal{N}_k+\{L_k^1+L_k^{\al}+L_k^{\beta}+L_k^{\gamma}\}+\{NL_k^1+NL_k^{\al}+NL_k^{\beta}+NL_k^{\gamma}\},\label{T3 ddtPhik}
\end{align}
where $\mathcal{N}_k$ refers to the negative terms.
Recall that $L_k,NL_k$ are defined in (\ref{L},\ref{NL}).
\end{pro}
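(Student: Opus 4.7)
The plan is to mimic the computation in Proposition \ref{thm d/dt Phi k} essentially verbatim, replacing the 1D spatial dependence in $y$ by the 2D dependence $y=(y_1,y_2)$ and noting that $u=u(y_1)$ is $y_2$-independent. The starting point is to differentiate each of the four pieces comprising $\Phi_k[n(t)]$ in time using the non-zero-mode equation \eqref{nonzeromodeF}, which on $\Torus^3$ reads
\bel\ba
\partial_t \widehat n_k + NL_k + L_k + u(y_1) ik\widehat n_k = \frac{1}{A}(\de_y - |k|^2)\widehat n_k,
\ea\eel
with $\de_y=\partial_{y_1 y_1}+\partial_{y_2 y_2}$ and the same $L_k, NL_k$ as in \eqref{L},\eqref{NL} but with $\partial_y$ now interpreted as $\partial_{y_1}$ (because of the chain rule applied to the hypocoercivity multipliers tied to $u'(y_1)$).

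Next I would isolate the \emph{passive-scalar contributions}, i.e.\ the terms arising if $L_k=NL_k=0$. These contributions split naturally as the hypocoercivity computation of \cite{BCZ15} for each of the four pieces of $\Phi_k$ (namely $\|\widehat n_k\|_2^2$, $\|\sqrt\alpha\,\pay\widehat n_k\|_2^2$, $2k\,\mathrm{Re}\langle i\beta u'\widehat n_k,\pay\widehat n_k\rangle$, and $|k|^2\|\sqrt\gamma u'\widehat n_k\|_2^2$). The key observation is that, since $u=u(y_1)$ and $\alpha,\beta,\gamma$ are constants in $y$, the extra Laplacian term $\partial_{y_2 y_2}$ in the 3D heat operator only contributes additional non-positive terms upon integration by parts (it commutes with all multipliers involving $u'$ and $\pay=\partial_{y_1}$, and only produces $\|\partial_{y_2}\widehat n_k\|_2^2$-type terms). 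Thus the 2D-in-$y$ case delivers the same negative bracket as \eqref{ddtPhik}, namely the bundle of nine negative terms collected in $\mathcal N_k$, with $\|\pa_y\widehat n_k\|_2^2$ and $\|\sqrt\alpha\pa_{yy}\widehat n_k\|_2^2$ replaced by the corresponding full-gradient norms $\|\na_y\widehat n_k\|_2^2$ and $\|\sqrt\alpha\pay\na_y\widehat n_k\|_2^2$ that appear in \eqref{T3 ddtPhik}. I would simply cite \cite{BCZ15} for these estimates.

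For the Keller-Segel contributions, I would substitute $-L_k - NL_k$ for $\partial_t\widehat n_k$ in each of the four time derivatives and read off the brackets $\{L_k^1+L_k^\alpha+L_k^\beta+L_k^\gamma\}$ and $\{NL_k^1+NL_k^\alpha+NL_k^\beta+NL_k^\gamma\}$ exactly as defined in \eqref{ddtPhik}. These terms are immediate from the product rule: for example, $\frac{d}{dt}\|\widehat n_k\|_2^2$ produces $-2\mathrm{Re}\langle L_k+NL_k,\widehat n_k\rangle$ (plus the passive-scalar part); $\frac{d}{dt}\|\sqrt\alpha\pay\widehat n_k\|_2^2$ yields, after one integration by parts in $y_1$, the terms $-2\mathrm{Re}\langle \alpha\pa^2_{y_1}\widehat n_k,-L_k-NL_k\rangle$; and analogous manipulations for the cross term and the $\gamma$ term. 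Since the $y_2$-integration commutes with everything, no new structural terms arise; in particular, the $L$ and $NL$ brackets are formally identical to the 2D case with $\partial_y$ reinterpreted as $\partial_{y_1}$ where it appears via $u'$ or the multiplier derivatives.

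I expect no genuine obstacle: the entire statement is a bookkeeping exercise combining the $\cite{BCZ15}$ passive-scalar computation (unchanged by the extra transverse direction) with the product-rule expansion of the Keller-Segel source terms $L_k+NL_k$. The mild point to verify is that integration by parts in the $\alpha,\beta,\gamma$ pieces does not generate any $y_2$-derivatives of $u'$ (true because $u$ is $y_2$-independent), so the $L$ and $NL$ groupings retain their form. A one-sentence justification, referencing \cite{BCZ15} for the linear piece and the product rule for the rest, suffices.
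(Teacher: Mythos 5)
Your bookkeeping plan is mostly sound, but the blanket claim that ``the extra Laplacian term $\partial_{y_2 y_2}$ \ldots only contributes additional non-positive terms upon integration by parts'' is false for the $\beta$ cross term, and that is precisely where the one genuinely new step in the 3D proof lives. For the three norm-type pieces $\|\widehat n_k\|_2^2$, $\|\sqrt\alpha\,\partial_{y_1}\widehat n_k\|_2^2$, and $|k|^2\|\sqrt\gamma u'\widehat n_k\|_2^2$, the $\partial_{y_2 y_2}$ contribution does indeed integrate by parts in $y_2$ into nonpositive $\|\partial_{y_2}\cdot\|_2^2$-type pieces, and your argument is fine there. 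But the $\beta$ piece $2k\,\mathrm{Re}\langle i\beta u'\widehat n_k,\partial_{y_1}\widehat n_k\rangle$ is not a norm: when you push $\partial_{y_2 y_2}$ through it and integrate by parts in $y_2$, you obtain the indefinite cross term
\begin{align*}
\frac{4k}{A}\,\mathrm{Re}\,\langle i\beta u'\,\partial_{y_1}\partial_{y_2}\widehat n_k,\ \partial_{y_2}\widehat n_k\rangle ,
\end{align*}
which has no sign and does not vanish (writing $\widehat n_k=a+ib$, its real part is a nonzero bilinear expression in $\partial_{y_2}a,\,\partial_{y_2}b$).

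The paper does not sweep this under the rug: it bounds this cross term by Young's inequality as
\begin{align*}
\frac{4k}{A}\,\mathrm{Re}\,\langle i\beta u'\,\partial_{y_1}\partial_{y_2}\widehat n_k,\ \partial_{y_2}\widehat n_k\rangle
\le \frac{1}{A}\|\sqrt\alpha\,\partial_{y_1}\partial_{y_2}\widehat n_k\|_2^2
+\frac{8|k|^2\beta^2}{2A\alpha\gamma}\,\|\sqrt\gamma u'\,\partial_{y_2}\widehat n_k\|_2^2 ,
\end{align*}
and then uses the coefficient constraint $8\beta^2\le\alpha\gamma$ (the reason the paper stresses this relation, as ensured in \cite{BCZ15}) to absorb both pieces into the negative dissipation terms $-\frac{2}{A}\|\sqrt\alpha\,\partial_{y_1}\nabla_y\widehat n_k\|_2^2$ and $-\frac{2|k|^2}{A}\|\sqrt\gamma u'\nabla_y\widehat n_k\|_2^2$ coming from \eqref{term 2} and \eqref{term 4}; this is what degrades the prefactors to $\frac14$ in $\mathcal N_k$ rather than leaving them at $\frac12$ or larger. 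Without this absorption step the inequality in the statement is not established, so a one-sentence citation of \cite{BCZ15} plus product rule does not suffice. You should add this computation (or at least flag the $\beta$ cross term explicitly and explain why $8\beta^2\le\alpha\gamma$ closes it); the rest of your outline --- reading off $L_k$ and $NL_k$ by substituting $\partial_t\widehat n_k=-L_k-NL_k+\ldots$ and integrating by parts in $y_1$, noting $u$ is $y_2$-independent --- matches the paper's approach.
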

\begin{proof}
The first term in $\partial_t \Phi_k[n](t)$ is:
\bel\label{term 1}
\frac{d}{dt}||\widehat{n}_k(t)||_2^2=-\frac{2}{A}|k|^2||\widehat{n}_k ||_2^2-\frac{2}{A}||\na_y\widehat{n}_k||_2^2+2Re\lan - NL_k,\widehat{n}_k\ran+2Re\lan -L_k,\widehat{n}_k\ran.
\eel
The second term, $\frac{d}{dt}||\sqrt{\al}\pay \widehat{n}_k(t)||_2^2$, gives:
\bel\label{term 2}\ba
\frac{d}{dt}||\sqrt{\al}\pay \widehat{n}_k(t)||_2^2 =&2Re\lan \al \pay\widehat{n}_k,\pa_{y_1t}\widehat{n}_k\ran\\
=&2Re\lan \al \pay\widehat{n}_k,\pay\left(\frac{1}{A}(\de_{y}-|k|^2)\widehat{n}_k-iu(y)k\widehat{n}_k-L_k-NL_k\right)\ran\\
=&-\frac{2}{A}|k|^2||\sqrt{\al}\pay\widehat{n}_k||_2^2-\frac{2}{A}||\sqrt{\al}\pa_{y_1}\grad_y \widehat{n}_k||_2^2 -2kRe\lan \al i u'\widehat{n}_k,\pay\widehat{n}_k\ran\\
&+2Re\lan \al \pay \widehat{n}_k,\pay\left(-L_k-NL_k\right)\ran\\
=&-\frac{2}{A}|k|^2||\sqrt{\al}\pay\widehat{n}_k||_2^2-\frac{2}{A}||\sqrt{\al}\pay \grad_y \widehat{n}_k||_2^2 -2kRe\lan \al i u'\widehat{n}_k,\pay\widehat{n}_k\ran\\
&-2Re\lan \al \pa^2_{y_1}\widehat{n}_k,-L_k-NL_k\ran.\\
\ea\eel
The third term, the term involving $\beta$, can be treated as follows:
\begin{align*}
\frac{d}{dt}(2k Re\lan i\beta u' \wh{n}_k(t),\pay\wh{n}_k(t)\ran)=
&2kRe\lan i\beta u'\pa_t \wh{n}_k(t),\pay\wh{n}_k(t)\ran+2kRe\lan i\beta u'\wh{n}_k(t),\pa_{y_1 t}\wh{n}_k(t)\ran\\
=&-\frac{4k^3}{A}Re\lan i\beta u' \wh{n}_k,\pay \wh{n}_k\ran+\frac{4k}{A}Re\lan i\beta u' \pa_{y_1y_1}\wh{n}_k,\pay\wh{n}_k\ran\\
&+\frac{2k}{A}Re \lan i\beta u''' \wh{n}_k,\pay \wh{n}_k\ran-2kRe\lan i\beta u' \wh{n}_k,u'ik\wh{n}_k\ran\\
&+2kRe\lan i\beta u'(-NL_k-L_k),\pay \wh{n}_k\ran+2kRe\lan i\beta u'\wh{n}_k,\pay(-NL_k-L_k)\ran\\
&+\frac{4k}{A}Re\lan i\beta u'\pay\payy \wh{n}_k(t),\payy \wh{n}_k(t)\ran\\
\leq&-\frac{4k^3}{A}Re\lan i\beta u' \wh{n}_k,\pay \wh{n}_k\ran+\frac{4k}{A}Re\lan i\beta u' \pa_{y_1y_1}\wh{n}_k,\pay\wh{n}_k\ran\\
&+\frac{2k}{A}Re \lan i\beta u''' \wh{n}_k,\pay \wh{n}_k\ran-2|k|^2\norm{ \sqrt{\beta} u' \wh{n}_k}_2^2\\
&+2kRe\lan i\beta u'(-NL_k-L_k),\pay \wh{n}_k\ran+2kRe\lan i\beta u'\wh{n}_k,\pay(-NL_k-L_k)\ran\\
&+\frac{1}{A}||\sqrt{\al}\pay\payy \wh{n}_k||_2^2+\frac{8|k|^2\beta^2}{2A\al\gamma}||\sqrt{\gamma}u' \payy \wh{n}_k(t)||_2^2.
\end{align*}
Using that $\frac{\beta^2}{\al\gamma}\leq \frac{1}{8}$ (recall, this is ensured in \cite{BCZ15}), the corresponding terms in \eqref{term 2} and \eqref{term 4} absorb the last two terms. Other terms are treated as in \S\ref{sec:ED} and \cite{BCZ15}. 
Finally, for the term $\frac{d}{dt}|k|^2||\sqrt{\gamma}u' \widehat{n}_k(t)||_2^2$, we have
\bel\label{term 4}\ba
\frac{d}{dt}|k|^2||\sqrt{\gamma}u' \widehat{n}_k(t)||_2^2
=&-\frac{2}{A}|k|^4||\sqrt{\gamma}u'\widehat{n}_k||_2^2
-\frac{4}{A}|k|^2Re\lan \gamma u'u'' \widehat{n}_k,\pay\widehat{n}_k\ran\\
&-\frac{2}{A}|k|^2||\sqrt{\gamma}u'\grad_y \widehat{n}_k||_2^2 \\ 
&+2|k|^2Re\lan \gamma (u')^2\widehat{n}_k,-L_k-NL_k\ran.
\ea\eel
Combining the above terms yields the result.
\end{proof}

As in \S\ref{sec:ED}, the remainder of the section is devoted to controlling $L$ and $NL$ by the negative terms in (\ref{T3 ddtPhik}).

\subsubsection{Estimate on the $L$ terms in \eqref{T3 ddtPhik}}
In this section we prove that for $A$ sufficiently large,
\bel\label{T3 Lestimate}
L_k^1+L_k^\al+L_k^\beta+L_k^\gamma \leq -\frac{1}{4}\mathcal{N}_k.
\eel
We begin by estimating the $L_k^1$ term in \eqref{T3 ddtPhik}.
Using \eqref{mode by mode estimates 3D} and the bootstrap hypotheses \textbf{(H)}, we have, for any fixed constant $B \geq 1$,
\be\ba
L_k^1 =&\frac{2}{A}Re\lan \overline{n}\widehat{n}_k+2({n}_0-\overline{n})\widehat{n}_k,\widehat{n}_k\ran
-\frac{2}{A}Re\lan\na_y{c}_0\cdot\na_y\widehat{n}_k,\widehat{n}_k\ran
-\frac{2}{A}Re\lan\na_y\widehat{c}_k\cdot\na_y{n}_0,\widehat{n}_k\ran\\
\leq& \frac{2}{A}(2||{n}_0-\overline{n}||_\infty+\overline{n})||\widehat{n}_k||_2^2+\frac{1}{AB}||\na_y \widehat{n}_k||_2^2+\frac{B}{A}||\na_yc_0||_\infty^2||\widehat{n}_k||_2^2+\frac{2}{A}||n_0||_\infty||\de_y\wh{c}_k||_2||\wh{n}_k||_2\\
&+\frac{2}{A}||n_0||_\infty||\na_y\wh{c}_k||_2||\na_y \wh{n}_k||_2\\
\lesssim & \frac{1}{AB}||\na_y \widehat{n}_k||_2^2+\frac{B\CC^2}{A}||\widehat{n}_k||_2^2.
\ea\ee
Therefore, by the bootstrap hypotheses, we can choose $B$ sufficiently large, and then $A$ sufficiently large, such that the following holds:
\begin{align*}
\abs{L_k^1}  \leq -\frac{1}{16}\mathcal{N}_k,
\end{align*}
which is consistent with \eqref{T3 Lestimate}.

We turn next to $L^\al_k$ in \eqref{T3 ddtPhik}, which we divide into the following:
\bel\label{T3 L al}\ba
L^\al_k & =-2Re\lan \al \pa_{y_1}^2\widehat{n}_k,\frac{1}{A}\overline{n}\widehat{n}_k+\frac{2}{A}({n}_0-\overline{n})\widehat{n}_k-\frac{1}{A}\na_yc_0\cdot\na_y\widehat{n}_k-\frac{1}{A}\na_y\widehat{c}_k\cdot\na_y{n}_0\ran\\
& = :L^\al_{k,0}+L^\al_{k,1}+L^\al_{k,2}+L^\al_{k,3}.
\ea\eel
The treatment of the $L^\al_{k,0}$ and $L^\al_{k,1}$ terms are analogous to the treatment in \S\ref{sec:ED} and hence we omit it for the sake of brevity.
Next we estimate $L^\al_{k,2}$ in (\ref{T3 L al}).
Using \eqref{elliptic estimate appendix 3D} and the hypotheses, we have the following for any $B \geq 1$:
\be\ba
\abs{L^\al_{k,2}} \lesssim \frac{1}{BA}||\sqrt{\al}\pa_{y_1}^2\widehat{n}_k||_2^2+\frac{B}{A^{3/2}}||\na_y{c}_0||_\infty^2||\na_y\widehat{n}_k||_2^2 
\lesssim \frac{1}{BA}||\sqrt{\al}\pa_{y_1}^2\widehat{n}_k||_2^2+\frac{B\CC^2}{A^{3/2}}||\na_y\widehat{n}_k||_2^2. 
\ea\ee
Hence, by the bootstrap hypotheses and the definition of $\mathcal{N}_k$ , it follows we can choose $B$ large and then $A$ large to control this term consistent with \eqref{T3 Lestimate}.
Similarly, for $L^\al_{k,3}$ in (\ref{T3 L al}), by \eqref{mode by mode estimates 3D} and the hypothesis \eqref{T3 H3}, we have that
\be\ba
|L^\al_{k,3}| \lesssim&\frac{1}{BA}||\sqrt{\al}\pa_{y_1}^2\widehat{n}_k||_2^2+\frac{B}{A^{3/2}}||\na_y\widehat{c}_k||_\infty^2||\na_y\widehat{n}_0||_2^2\\
\lesssim& \frac{1}{BA}||\sqrt{\al}\pa_{y_1}^2\widehat{n}_k||_2^2+\frac{B}{A^{3/2}}||\widehat{n}_k||_2||\na_y\widehat{n}_k||_2||\na_y\widehat{n}_0||_2^2\\
\lesssim& \frac{1}{BA}||\sqrt{\al}\pa_{y_1}^2\widehat{n}_k||_2^2+\frac{B}{A^{3/2}}||\na_y\widehat{n}_k||_2^2+\frac{BC_{\dot{H}^1}^4}{A^{3/2}}||\widehat{n}_k||_2^2.\\
\ea\ee
As above,it follows we can choose $B$ large and then $A$ large to control this term consistent with \eqref{T3 Lestimate}.

Next, turn to the $L_k^\beta$ term in (\ref{T3 ddtPhik}), which we divide into two contributions:
\bel\label{T3 L beta}\ba
L_k^\beta=&2kRe \lan i\beta u'\widehat{n}_k,\pa_{y_1}(\frac{1}{A}\overline{n}\widehat{n}_k+\frac{2}{A}({n}_0-\overline{n})\widehat{n}_k-\frac{1}{A}\na_yc_0\cdot\na_y\widehat{n}_k-\frac{1}{A}\na_y\widehat{c}_k\cdot\na_y{n}_0)\ran  \\
&+ 2k Re \lan i\beta u'(\frac{1}{A}\overline{n}\widehat{n}_k+\frac{2}{A}({n}_0-\overline{n})\widehat{n}_k-\frac{1}{A}\na_yc_0\cdot\na_y\widehat{n}_k-\frac{1}{A}\na_y\widehat{c}_k\cdot\na_y{n}_0),\pa_{y_1}\widehat{n}_k\ran \\
=:& L^\beta_{k,1}+L^\beta_{k,2}.
\ea\eel
The first term in (\ref{T3 L beta}) is further decomposed via
\bel\label{T3 L beta 1}\ba
L^\beta_{k,1}=&2kRe\lan i\beta u'\widehat{n}_k,\pa_{y_1}\left(\frac{1}{A}\overline{n}\widehat{n}_k+\frac{2}{A}({n}_0-\overline{n})\widehat{n}_k-\frac{1}{A}\na_yc_0\cdot\na_y\widehat{n}_k-\frac{1}{A}\na_y\widehat{c}_k\cdot\na_y{n}_0\right)\ran\\
=:&L^\beta_{k,10}+L^\beta_{k,11}+L^\beta_{k,12}+L^\beta_{k,13}.
\ea\eel
The treatment of the $L^\beta_{k,10}$ and $L^\beta_{k,11}$ terms are analogous to the treatment in \S\ref{sec:ED} and are hence we omitted for the sake of brevity.
For the $L^\beta_{k,12}$ term in (\ref{T3 L beta 1}), we first estimate,
\bel\label{T3 L beta k,12}
\ba
|L^\beta_{k,12}| \leq &\bigg|2kRe\lan i\beta u'' \widehat{n}_k,\frac{1}{A}\na_y {c}_0\cdot\na_y\widehat{n}_k\ran\bigg| +\bigg|2kRe\lan i\beta u'\pa_{y_1}\widehat{n}_k,\frac{1}{A}\na_yc_0\cdot\na_{y}\widehat{n}_k\ran\bigg|\\
=:&L^\beta_{k,121}+L^\beta_{k,122}.
\ea
\eel
For $L^\beta_{k,121}$, we use \eqref{elliptic estimate appendix 3D}, the definition of $\beta$, and the bootstrap hypotheses to deduce,
\be\ba
\abs{L^\beta_{k,121}} \leq& \frac{1}{AB}||\na_y \wh{n}_k||_2^2+\frac{B|k|^2||\beta u''\wh{n}_k||_2^2||\na_y c_0||_\infty^2}{A}\\
\lesssim& \frac{1}{AB}||\na_y \wh{n}_k||_2^2+\frac{B\CC^2||\wh{n}_k||_2^2}{A}.
\ea\ee
Hence, we may choose $B$ large and then $A$ large to make these terms consistent with \eqref{T3 Lestimate}.
Next we turn to $L^\beta_{k,122}$ in (\ref{T3 L beta k,12}). Applying integration by parts,  \eqref{elliptic estimate appendix 3D}, the definition of $\beta$ and the bootstrap hypotheses, we have
\be\ba
\abs{L^\beta_{k,122}} \leq& \bigg|\frac{2k}{A} Re\lan i\beta u'\pa_{y_1}\na_y\wh{n}_k\cdot\na_y c_0,\wh{n}_k\ran\bigg|+\bigg|\frac{2k}{A}Re\langle i\beta u'\pa_{y_1}\wh{n}_k(\de c_0),\wh{n}_k\ran\bigg|\\
&+\bigg|\frac{2k}{A}Re\lan i\beta u''\pa_{y_1}\wh{n}_k,\pay c_0\wh{n}_k\ran\bigg|\\
\leq&\frac{1}{AB}||\sqrt{\al}\pay^2\widehat{n}_k||_2^2+\frac{1}{AB}||\sqrt{\al}\pay\payy\widehat{n}_k||_2^2+\frac{B|k|^2\beta}{A\alpha}||\sqrt{\beta}u' \widehat{n}_k||_2^2 ||\na_yc_0||_\infty^2\\
&+\frac{1}{AB}||\na_y \wh{n}_k||_2^2+\frac{|k|^2B}{A}||\sqrt{\beta}u'\wh{n}_k||_2^2||n_0-\overline{n}||_\infty^2\\
&+\frac{1}{AB}||\na_y \wh{n}_k||_2^2+\frac{|k|^2B}{A}||\beta u''\wh{n}_k||_2^2||\na_y c_0||_\infty^2\\
\lesssim&\frac{1}{AB}||\sqrt{\al}\na_y\pay\widehat{n}_k||_2^2+\frac{|k|^2B\CC^2}{A^{1/2}}||\sqrt{\beta}u' \widehat{n}_k||_2^2+\frac{1}{AB}||\na_y \wh{n}_k||_2^2+\frac{B\CC^2}{A}||\wh{n}_k||_2^2.
\ea\ee
Hence, we may choose $B$ large and then $A$ large to make these terms consistent with \eqref{T3 Lestimate}.
Consider next $L^\beta_{k,13}$ in (\ref{T3 L beta 1}), which we integrate by parts and further sub-divide as:
\bel\label{T3 L beta 13}\ba
L^\beta_{k,13}=&2kRe\lan i\beta u''  \widehat{n}_k+i\beta u'\pay  \widehat{n}_k,\frac{1}{A}\na_{y}\widehat{c}_k\cdot\na_y{n}_0\ran = :L^\beta_{k,131}+L^\beta_{k,132}.
\ea\eel
For $L^\beta_{k,131}$, by
(\ref{mode by mode estimates 3D}), the definition of $\beta$ and the bootstrap hypotheses, we have the following for a large constant $B \geq 1$
\be\ba
\abs{L^\beta_{k,131}} \leq&\frac{|k|^2B}{A}||\beta u'' \widehat{n}_k||_2^2||\na_y{n}_0||_2+\frac{1}{AB}||\na_y \widehat{c}_k||_\infty^2||\na_y {n}_0||_2\\
\lesssim & \frac{B||\na_y {n}_0||_2}{A}|| \widehat{n}_k||_2^2+\frac{1}{AB}||\widehat{n}_k||_2||\na_y\widehat{n}_k||_2||\na_y {n}_0||_2\\
\lesssim & \frac{C_{\dot{H}^1}B}{A}|| \widehat{n}_k||_2^2+\frac{1}{AB}||\na_y\widehat{n}_k||_2^2+\frac{C_{\dot{H}^1}^2}{A}||\widehat{n}_k||_2^2.
\ea\ee
Therefore, for $B$, then $A$, large, this term is controlled consistent with \eqref{T3 Lestimate}.

Using \eqref{mode by mode estimates 3D}, the $L^\beta_{k,132}$ term in (\ref{T3 L beta 13}) is handled as follows for a large constant $B \geq 1$:
\bel\label{T3 iii 3 I}\ba
\abs{L^\beta_{k,132}}\lesssim & \frac{|k|^{1/2}}{A^{1/2}B}||\sqrt{\al}\pay \widehat{n}_k||_2^2+\frac{B \beta^2}{A^{3/2}\alpha}|k|^{3/2} ||\na_y\widehat{c}_k||_\infty^2||\na_y{n}_0||_2^2\\
\lesssim &\frac{|k|^{1/2}}{A^{1/2}B}||\sqrt{\al}\pay \widehat{n}_k||_2^2+\frac{B}{A} \norm{\widehat{n}_k}_2\norm{\na_y\widehat{n}_k}_2||\na_y{n}_0||_2^2\\
\lesssim &\frac{|k|^{1/2}}{A^{1/2}B}||\sqrt{\al}\pay \widehat{n}_k||_2^2+\frac{1}{AB}\norm{\na_y\widehat{n}_k}_2^2+\frac{B^3C_{\dot{H}^1}^4}{A} \norm{\widehat{n}_k}_2^2.\\
\ea\eel
Therefore, by the bootstrap hypotheses, for $B$ and $A$ sufficiently large, this is consistent with \eqref{T3 Lestimate}.

Turn next to $L^\beta_{k,2}$ in (\ref{T3 L beta}), which we sub-divide as follows:
\bel\label{T3 L beta 2}\ba
L^\beta_{k,2}
=&2kRe\lan i\beta u'\frac{1}{A}\overline{n}\widehat{n}_k,\pa_{y_1}\widehat{n}_k\ran+2kRe\lan i\beta u'\frac{2}{A}({n}_0-\overline{n})\widehat{n}_k,\pa_{y_1}\widehat{n}_k\ran\\
&-2kRe\lan i\beta u'\frac{1}{A}\na_yc_0\cdot\na_y\widehat{n}_k,\pa_{y_1}\widehat{n}_k\ran-2kRe\lan i\beta u'\frac{1}{A}\na_y\widehat{c}_k\cdot\na_y{n}_0,\pa_{y_1}\widehat{n}_k\ran\\
=:&L^\beta_{k,20}+L^\beta_{k,21}+L^\beta_{k,22}+L^\beta_{k,23}.
\ea\eel
The $L^\beta_{k,22}$ term can be handled in the same manner as $L^\beta_{k,122}$.
For the $L^\beta_{k,21}$ term, we use the following straightforward estimate for a constant $B \geq 1$:
\be\ba
L^\beta_{k,21}
\lesssim &\frac{1}{AB}||\na_y\widehat{n}_k||_2^2+\frac{B\CC^2}{A}|k|^2||\sqrt{\beta}u'\widehat{n}_k||_2^2.
\ea\ee
As above, this is consistent with \eqref{T3 Lestimate} by the bootstrap hypotheses and $B$,$A$ large. The $L^\beta_{k,20}$ term is treated in the same way, so we skip the details for the sake of brevity.
The $L^\beta_{k,23}$ term can be estimated in the same manner as $L^\beta_{k,132}$ above (\ref{T3 iii 3 I}) and hence is omitted for brevity.
This completes the treatment of the $L_k^\beta$ term in \eqref{T3 ddtPhik}.

Finally, we estimate $L_k^\gamma$ in (\ref{T3 ddtPhik}).
We first sub-divide:
\bel\label{T3 L gamma}\ba
L_k^\gamma=& 2|k|^2Re\lan \gamma (u')^2\widehat{n}_k,\frac{1}{A}\overline{n}\widehat{n}_k+\frac{2}{A}({n}_0-\overline{n})\widehat{n}_k-\frac{1}{A}\na_yc_0\cdot\na_y\widehat{n}_k-\frac{1}{A}\na_y\widehat{c}_k\cdot\na_y{n}_0\ran\\
 =:&L^\gamma_{k,0}+ L^\gamma_{k,1}+L^\gamma_{k,2}+L^\gamma_{k,3}.
\ea\eel
The first and second term in (\ref{T3 L gamma}) are estimated as in \S\ref{sec:ED}; we omit the details for brevity.
For $L^\gamma_{k,2}$ in (\ref{T3 L gamma}), by \eqref{elliptic estimate appendix 3D} and the hypotheses, we have for $B \geq 1$ large,
\be\ba
L^\gamma_{k,2}
\lesssim &\frac{B\gamma }{A\beta }|k|^2||\sqrt{\beta}u'\widehat{n}_k||_2^2||\na_yc_0||_\infty^2+\frac{|k|^2}{AB}||\sqrt{\gamma}u'\na_y\widehat{n}_k||_2^2\\
\lesssim &\frac{B\CC^2}{A^{1/2}}|k|^2||\sqrt{\beta}u'\widehat{n}_k||_2^2+\frac{|k|^2}{AB}||\sqrt{\gamma}u'\na_y\widehat{n}_k||_2^2.
\ea\ee
As usual, this is consistent with \eqref{T3 Lestimate} by the bootstrap hypotheses and $B$,$A$ large.
The $L^\gamma_{k,3}$ term in (\ref{T3 L gamma}), is estimated slightly differently; using \eqref{mode by mode estimates 3D} and the hypotheses, we have for $B \geq 1$ large,
\be\ba
L^\gamma_{k,3}
\lesssim &\frac{1}{A^{1/2}B}|k|^{5/2}||\sqrt{\gamma}u'\widehat{n}_k||_2^2+\frac{B}{A^{3/2}}|k|^{3/2}\gamma ||\na_y\widehat{c}_k||_\infty^2||\na_y{n}_0||_2^2\\
\lesssim &\frac{1}{A^{1/2}B}|k|^{5/2}||\sqrt{\gamma}u'\widehat{n}_k||_2^2+\frac{B}{A}||\widehat{n}_k||_2||\na_y\widehat{n}_k||_2||\na_y{n}_0||_2^2\\
\lesssim &\frac{1}{A^{1/2}B}|k|^{5/2}||\sqrt{\gamma}u'\widehat{n}_k||_2^2+\frac{1}{AB}||\na_y\widehat{n}_k||_2^2+\frac{B^3}{A}||\widehat{n}_k||_2^2C_{\dot{H}^1}^4,
\ea\ee
this is consistent with \eqref{T3 Lestimate} by the bootstrap hypotheses and $B$,$A$ large.
This completes the proof of \eqref{T3 Lestimate}, and hence, under the bootstrap hypotheses, the contributions of the $L$ terms in \eqref{T3 ddtPhik} is absorbed by the $\mathcal{N}_k$ terms
for $A$ chosen sufficiently large.

\subsubsection{Estimate on $NL$ terms}
The treatment of these terms is essentially the same as \S\ref{sec:NLED}.
For example, for the $NL_k^1$ term in \eqref{T3 ddtPhik}, we estimate via,
\begin{align*}
-\sum_{k \neq 0}2 Re\lan NL_k,\widehat{n}_k\ran  = -\brak{\frac{2}{A}\grad \cdot \left( n_{\neq} \grad c_{\neq}\right),n_{\neq}} 
  = \frac{2}{A} \brak{n_{\neq} \grad c_{\neq},\grad n_{\neq}}    
 \leq \frac{2}{A} \norm{\grad c_{\neq}}_{\infty }\norm{\grad n_{\neq}}_{2} \norm{n_{\neq}}_{2}. 
\end{align*}
Applying \eqref{elliptic estimate appendix 3D} (together with the bootstrap hypotheses), gives the following for any constant $B > 1$,
\begin{align*}
-\sum_{k \neq 0}2 Re\lan NL_k,\widehat{n}_k\ran 
& \lesssim \frac{1}{AB}\norm{\grad n_{\neq}}_{2}^2 + \frac{B \CC^{2}}{A}\norm{n_{\neq}}_{2}^2. 
\end{align*}
By first choosing $B$ big, and then choosing $A$ large (relative to constants and $B$), these terms are absorbed by the negative terms in \eqref{T3 ddtPhik}.
As the other terms are similarly analogous, we omit the details for the sake of brevity.

\subsection{Nonzero mode $L_t^2 \dot{H}_{x,y}^1$ estimate (\ref{T3 C1})}
Computing $\frac{d}{dt}||{n}_{\neq}||_2^2$ and applying \eqref{elliptic estimate appendix 3D}, 
\bel\label{d dt n neq 2 2 T3}\ba
\frac{1}{2}\frac{d}{dt}||{n}_{\neq}||_2^2=&\lan n_{\neq},\frac{1}{A}\de n_{\neq}+\frac{1}{A}(n_0-\overline{n})n_{\neq} + \frac{1}{A}n_{\neq} {n}_0 -\frac{1}{A}\na c_0\cdot \na n_{\neq}-\frac{1}{A}\na c_{\neq}\cdot \na n_0-u(y)\pa_x n_{\neq}-\frac{1}{A}(\na\cdot(\na c_{\neq} n_{\neq}))_{\neq}\ran\\
\lesssim &-\frac{1}{2A}||\na n_{\neq}||_2^2+\frac{B}{A}||{n}_{\neq}||_2^2||\na{c}_0||_\infty^2+\frac{B}{A}||\grad c_{\neq}||_{\infty} \norm{n_{\neq}}_2||\na_y{n}_0||_2 \\
& +\frac{1}{A}||{n}_{\neq}||_2^2||{n}_0-\overline{n}||_\infty + \frac{1}{A}||{n}_{\neq}||_2^2 + \frac{1}{A}||\na n_{\neq}||_2||\na c_{\neq}||_4||n_{\neq}||_4\\
\lesssim &-\frac{1}{2A}||\na n_{\neq}||_2^2+\frac{C_{2,\infty}^2}{A}||n_{\neq}||_2^2 + \frac{C_{2,\infty} C_{\dot{H}^1}}{A} \norm{n_{\neq}}_2 +  \frac{1}{A}||\na n_{\neq}||_2||\na c_{\neq}||_4||n_{\neq}||_4.
\ea\eel
Note that, due to the bootstrap hypothesis (\ref{T3 H2}), there holds
\bel
\int_0^{T_\star} \frac{C_{2,\infty}^2}{A}||n_{\neq}||_2^2 + \frac{C_{2,\infty} C_{\dot{H}^1}}{A} \norm{n_{\neq}}_2 dt \lesssim \frac{\log A}{A^{1/2}} C_{ED}^2(1 + \norm{n_{in}}_{H^1}^2)\left(C_{2,\infty}^2 + C_{2,\infty} C_{\dot{H}^1}\right),
\eel
which can be made arbitrarily small by choosing $A$ large.
The latter term is treated via the Gagliardo-Nirenberg-Sobolev inequality, s
\bel\ba
\frac{1}{A}||\na n_{\neq}||_2||\na c_{\neq}||_4||n_{\neq}||_4 \lesssim &\frac{1}{A}||\na n_{\neq}||_2||\grad c_{\neq}||_2^{1/4}||\na^2 c_{\neq}||_2^{3/4}||n_{\neq}||_2^{1/4}||\na n_{\neq}||_2^{3/4}\\
\lesssim &\frac{1}{AB}||\na n_{\neq}||_2^2+\frac{B}{A}||n_{\neq}||_2^{10} \\
\lesssim &\frac{1}{AB}||\na n_{\neq}||_2^2+\frac{B C_{2,\infty}^8}{A}||n_{\neq}||_2^{2}.
\ea\eel
Hence, by choosing $B$ then $A$ sufficiently large, we have following $L_t^2\dot{H}_{x,y}^2$ estimate:
\be
\ba
\frac{1}{A}\int_0^{T\star}||\na n_{\neq}||_2^2dt\leq \frac{1}{A^{1/4}}+2||n_{in}||_{2}^2\leq 4||n_{in}||_{2}^2.
\ea
\ee
As a result, we have proven (\ref{T3 C1}).

\subsection{Remainder of the proof of Theorem \ref{thm:3D} in the case $\mathbb{T}^3$} 
The remaining steps in the proof of Proposition \ref{prop:3DT3} are the proofs of \eqref{T3 C3} and \eqref{T3 C4}. 
Since $L^2$ is subcritical in 3D, the proof of \eqref{T3 C4} follows as in \S\ref{sec:Linfty2D} by standard methods. 
The proof of \eqref{T3 C3} is a slightly easier variation of the arguments carried out in \S\ref{sec:zeroModeTxR2}. 
These arguments are carried out below and hence are not repeated here. This completes the proof of Proposition \ref{prop:3DT3} and hence also Theorem \ref{thm:3D} in the $\Torus^3$ case. 

\section{Proof of Theorem \ref{thm:3D} in the case $\mathbb{T}\times \rr^2$} \label{sec:3DR2}

The main difference between \S\ref{sec:3DR2} and \S\ref{sec:3DT3} is that we can no longer propagate a lower bound on the solution, which makes an estimate on the free energy more delicate.
Here, we instead use an approximate free energy for which it is easier to make estimates on the effect of low densities.

\subsection{Basic setting and bootstrap argument}
In this section, we analyse the equation:
\bel\label{KSC3}
\left\{\begin{array}{rrr}\pa_t n+u(y_1)\pa_x n+\frac{1}{A}\na\cdot(\na c n)=\frac{1}{A}\de n,\\
-\de c=n,\\
n(\cdot,0)=n_0,\end{array}\right.
\eel
in the space $\mathbb{T}\times \rr^2$. Note that the equation for $c$ is slightly different in $\Real^2$.
The basic idea behind the proof of the main theorem is the same as \S\ref{sec:3DT3}, however, we cannot use the true 2D free energy, and instead make a more complicated estimate on an approximate free energy. 
For constants $C's$ determined by the proof, define $T_\star$ to be the end-point of the largest interval $[0,T_\star]$ such that the following hypotheses hold for all $T \leq T_\star$:

\begin{subequations}
(1) Nonzero mode $L_t^2(0,T_\star;\dot{H}_{x,y}^1)$ estimate:

\bel\ba\label{3D H1}
\frac{1}{A}\int_0^{T_\star}||\nabla n_{\neq}||_{L^2(\mathbb{T}\times \rr^2)}^2dt\leq& 8||n_{in}||_2^2;\\
\ea\eel

(2) Nonzero mode enhanced dissipation estimate:
\bel\label{3D H2}\ba
||n_{\neq}||_{L^2(\mathbb{T}\times \rr^2)}^2\leq& 4C_{ED}||n_{in}||_{H^1}^2e^{-\frac{ct}{A^{1/2}\log A}}, \\
\ea\eel
where $c$ is a small number depending only on $u$ (in particular, independent of $A$);

(3) Zero mode uniform in time estimate:
\bel\ba\label{3D H3}
||n_0||_{L^\infty_t(0,T_\star;L^2_{y})}\leq& 4C_{L^2},\\
||\pa_y n_0||_{L^\infty_t(0,T_\star;L^2_{y})}\leq& 4C_{\dot{H}^1};\\
\ea\eel

(4) $L_t^\infty(0,T_\star; L_{x,y}^\infty)$ estimate of the whole solution:
\bel\label{3D H4}\ba
 ||n||_{L^\infty_t(0,T_\star;L^\infty_{x,y})} \leq& 4C_\infty.
\ea\eel
\end{subequations}
As in the two-dimensional case, we introduce the following constant:
\bel
\CC:=1+M+C_{ED}^{1/2}||n_{in}||_{H^1}+C_{L^2}+C_\infty.
\eel
The constant $C_{ED}$ depends only on $u$. The constant $C_{L^2}$ depends only on the initial data $n_{in}$. The constant $C_{\infty}$ depends on $n_{in}$ and $C_{L^2}$. Finally, the constant $C_{\dot{H}^1}$ depends on $n_{in}$, $C_{L^2}$ and $C_\infty$.

As in \S\ref{sec:2D} and \S\ref{sec:3DT3}, the proof of Theorem \ref{thm:3D} (b) is completed by the following proposition. 
\begin{pro}
For all $n_{in}$ and $u$, there exists an $A_0(\norm{n_{in}}_{H^1},||n_{in}||_\infty)$ such that if $A > A_0$ then the following conclusions, referred to as (\textbf{C}), hold on the interval $[0,T_\star]$:

\begin{subequations}
(1) \bel\ba\label{3D C1}
\frac{1}{A}\int_0^{T_\star}||\nabla_{x,y} {n}_{\neq}||_2^2dt\leq& 4||n_{in}||_2^2;\\
\ea\eel

(2) \bel\label{3D C2}\ba
||{n}_{\neq}(t)||_2^2\leq& 2C_{ED}||n_{in}||_{H^1}^2e^{-\frac{ct}{A^{1/2}\log A}};\\
\ea\eel

(3) \bel\ba\label{3D C3}
||n_0||_{L^\infty_t(0,T_\star;L^2_{y})}\leq& 2C_{L^2},\\
||\pa_y n_0||_{L^\infty_t(0,T_\star;L^2_{y})}\leq& 2C_{\dot{H}^1};\\
\ea\eel

(4) \bel\label{3D C4}\ba
 ||n||_{L^\infty_t(0,T_\star;L^\infty_{x,y})} \leq& 2C_\infty.
\ea\eel
\end{subequations}
\end{pro}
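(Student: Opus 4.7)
The plan is to follow the same bootstrap architecture as in \S\ref{sec:3DT3}, modified to account for the unboundedness of $y \in \Real^2$. The estimates (\ref{3D C1}) and (\ref{3D C2}) essentially transfer verbatim: the hypocoercivity functional $\Phi_k[n]$ depends on $u=u(y_1)$ which is controlled uniformly by hypothesis (and $u'$ bounded below near $\infty$ preserves the enhanced dissipation rate from \cite{BCZ15}), and all the linear/nonlinear estimates on $L_k$ and $NL_k$ in \S\ref{sec:ED} and \S\ref{sec:3DT3} only use integration by parts in $x,y$ and elliptic estimates on $\widehat{c}_k$. The only change is that we replace the $\Torus^2$ elliptic estimates by their $\Real^2$ counterparts; the Bessel potential $(-\Delta_y + |k|^2)^{-1}$ for $k \neq 0$ remains equally well-behaved. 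Similarly, (\ref{3D C1}) follows from differentiating $\|n_{\neq}\|_2^2$ in time exactly as in \S\ref{sec:3DT3}, using (\ref{3D H2}) to make the $L^1_t$ norm of the source terms small.

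The main obstacle is (\ref{3D C3}), the uniform $L^2$ and $\dot{H}^1$ control on $n_0$. The evolution of $n_0$ is, to leading order as $A \to \infty$,
\be
\pa_t n_0 + \tfrac{1}{A}\na_y\cdot(n_0 \na_y c_0) = \tfrac{1}{A}\de_y n_0 - \tfrac{1}{A}(\na\cdot(\na c_{\neq} n_{\neq}))_0,
\ee
which is the $L^1$-critical 2D Patlak-Keller-Segel with a forcing from the nonzero frequencies. Since $\|n_{in}\|_{L^1} < 8\pi$, in the unforced case the classical argument of \cite{BlanchetEJDE06} based on the free energy $\cF[n_0] = \int n_0 \log n_0 - \frac{1}{2}\int n_0 c_0 \, dy$ and the logarithmic Hardy--Littlewood--Sobolev inequality on $\Real^2$ would give a uniform $L \log L$ bound (using also finite second moment, which is why we assume $I[n_{in}] < \infty$). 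The difficulty, as flagged in the introduction to \S\ref{sec:3DR2}, is that in $\Torus \times \Real^2$ one cannot propagate a pointwise lower bound $n \geq q e^{-\overline{n}t/A}$ as in Lemma~\ref{lem:lowbd} (there is no $\overline{n}$ and the mass can spread), so the error term $\frac{1}{A}\int (\na_y c_{\neq} n_{\neq})_0 \cdot (\na_y \log n_0 - \na_y c_0) \,dy$ appearing in $\frac{d}{dt}\cF[n_0]$ cannot be absorbed by dividing through $n_0$.

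To overcome this, I plan to introduce an \emph{approximate} free energy of the form
\be
\cF_\epsilon[n_0] := \int (n_0+\epsilon \mu) \log(n_0+\epsilon\mu) \, dy - \tfrac{1}{2}\int n_0 c_0 \, dy,
\ee
where $\mu$ is a fixed integrable reference profile (e.g.\ a Gaussian) and $\epsilon = \epsilon(A) \to 0$ is chosen to balance errors. In this formulation $1/(n_0+\epsilon\mu) \lesssim (\epsilon \mu)^{-1}$ pointwise, so the nonzero-mode error produces a term that is large in $\epsilon^{-1}$ but that is integrable in time thanks to the exponential decay in (\ref{3D H2}); we then tune $\epsilon$ of polynomial order in $A$ so that $\int_0^\infty \epsilon^{-1} e^{-ct/(A^{1/2}\log A)}\, dt$ remains small as $A \to \infty$. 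Along with a standard virial estimate $\frac{d}{dt}\int |y|^2 n_0\, dy \leq C$ (driven only by the mass and controlled nonzero-mode terms), this yields a uniform bound on $\int n_0 \log n_0 \, dy$ via the log-HLS inequality on $\Real^2$ and the strict gap $M < 8\pi$, as in \cite{BlanchetEJDE06}. From here, $\|n_0\|_{L^2}$ and $\|\pa_y n_0\|_{L^2}$ are recovered by a $G(t)$-trick identical to \S\ref{G(t)trick}, now using the $L\log L$ bound in place of the mass-squared bound available on $\Torus^2$, and applying the Carleman-type inequality $\|f\|_{L^2}^2 \lesssim \|f\|_{L\log L}\|\grad f\|_{L^2}$ to close the differential inequality.

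Finally, (\ref{3D C4}) follows by standard Moser/Alikakos iteration on $n$ once $\|n_0\|_{L^\infty_t L^2_y}$ and the enhanced dissipation for $n_{\neq}$ are in hand: the $L^2$ norm of the full solution is subcritical in 3D for the advection-diffusion operator $\pa_t + u\pa_x - A^{-1}\Delta$ (in the sense that the dissipation dominates the bilinear chemotaxis term on nonzero modes thanks to enhanced dissipation, while the zero-mode dynamics is effectively 2D subcritical after $L^2$ control). The hardest step is the construction and tuning of $\cF_\epsilon$, both to match the decay rate from enhanced dissipation with the $\epsilon^{-1}$ loss, and to ensure the log-HLS argument still closes with $M < 8\pi$ after the regularization.
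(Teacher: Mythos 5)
Your architecture matches the paper's: the same bootstrap scheme, and you are right that conclusions (1), (2), and (4) transfer from $\Torus^3$ with only cosmetic changes (the elliptic estimates are replaced by their $\Torus\times\Real^2$ counterparts, and the hypocoercivity functional $\Phi_k$ is unaffected). You have also correctly identified the obstruction in (3): the maximum-principle lower bound on $n$ from the $\Torus^3$ case is unavailable on $\Torus\times\Real^2$, so the $\frac{1}{n_0}$ weight in the free-energy dissipation inequality cannot be controlled directly. However, your proposed fix does not work, and the paper's fix is of a genuinely different character.

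You propose replacing $n_0\log n_0$ by $(n_0+\epsilon\mu)\log(n_0+\epsilon\mu)$ with $\mu$ a fixed Gaussian. The claimed pointwise bound $1/(n_0+\epsilon\mu)\lesssim(\epsilon\mu)^{-1}$ is then supposed to give $\|1/(n_0+\epsilon\mu)\|_\infty\lesssim\epsilon^{-1}$, but that step requires $\mu$ to be bounded \emph{below}, which is incompatible with $\mu\in L^1(\Real^2)$: a Gaussian decays to zero, so $(\epsilon\mu)^{-1}\to\infty$ at infinity and $\|1/(n_0+\epsilon\mu)\|_\infty=\infty$. You cannot have a regularizer on $\Real^2$ that is simultaneously integrable (so that the perturbed entropy is finite) and bounded below (so that the $1/n_0$ weight is tamed). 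This is exactly why the naive regularization of the \emph{density} cannot close. There is a second, independent problem: $\nabla_y\log n_0\neq\nabla_y\log(n_0+\epsilon\mu)$, so when you differentiate $\cF_\epsilon$ in time the dissipation term no longer completes to a nonnegative square; you pick up uncontrolled cross terms involving $\epsilon\nabla_y\mu/(n_0+\epsilon\mu)$ that do not have a sign. In contrast, the paper regularizes the \emph{entropy integrand} rather than the density, replacing $\log$ by a function $\Gamma$ which equals $\log n_0$ for $n_0\geq 1$ and equals the quadratic Taylor polynomial $(n_0-1)-\tfrac{1}{2}(n_0-1)^2$ for $n_0<1$. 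The crucial feature is that $\Gamma'(n_0)=2-n_0$ on $\{n_0<1\}$, a bounded polynomial rather than the singular $1/n_0$, so both the good dissipation term $T_0$ and the nonzero-mode error $T_{\neq}$ have only polynomial (hence $L^2$-controllable) weights in $n_0$; the sign of $T_0$ is then extracted via a careful completion of squares plus an integration by parts on the sublevel set $\{n_0<1\}$ (Sard's theorem being used to choose smooth level sets). This gives the key estimate $\frac{d}{dt}\cF_\Gamma[n_0]\lesssim A^{-1}\|(\nabla_y c_{\neq}n_{\neq})_0\|_{L^2}^2 + A^{-1}\|(\nabla_y c_{\neq}n_{\neq})_0\|_{L^{4/3}}\|n_0\|_{L^{4/3}}$ with \emph{no} $1/n_0$ weight at all, and the right-hand side is integrable in time by the enhanced-dissipation hypothesis. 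You would need to replace your $\cF_\epsilon$ by something of this kind. Finally, for the step from $L\log L$ to $L^2$, the paper uses the standard truncation $(n_0-K)_+$ together with Gagliardo--Nirenberg--Sobolev and Nash, rather than the inequality $\|f\|_{L^2}^2\lesssim\|f\|_{L\log L}\|\nabla f\|_{L^2}$ you cite; your route is plausible but would need to be verified carefully since you also have a time-dependent source from the nonzero modes to absorb.
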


The remaining part of this section is organized as follows: in section 3.2, we prove the estimate on the zeroth mode (\ref{3D C3}); in section 3.3, we give some remark about the proof of (\ref{3D C1}),(\ref{3D C2}) and (\ref{3D C4}).
\subsection{Estimate on the zero mode (\ref{3D C3})} \label{sec:zeroModeTxR2}
For the case $y \in \mathbb R^2$, it is not clear how to estimate the contribution to $\frac{d}{dt}\cF$ from the non-zero frequencies at small values of $n_0$. 
The idea is to find a new (approximate) free energy which is better adapted. 
We use the following as the new approximate free energy: 
\begin{align}
\cF_{\Gamma}[n_0]=\int n_0\Gamma(n_0)-\frac{n_0c_0}{2}dy,\label{def:En0}
\end{align}
where $\Gamma $ is defined as
\bel
\Gamma(n_0)=\left\{\begin{array}{rr}\log n_0 , n_0\geq 1,\\(n_0-1)-\frac{(n_0-1)^2}{2},n_0<1.\end{array}\right.
\eel
The $\Gamma$ function is chosen such that it matches $\log$ when $n_0$ is large but is bounded from below when $n_0$ is small. Here, we have replaced the function $\log (1+(n_0-1))$ by its degree two Taylor expansion centered at 1 when $n_0<1$ and use the original $\log$ function when $n_0\geq 1$.

Next, we apply a sequence of lemmas to prove that under the bootstrap hypothesis (\ref{3D H1}), (\ref{3D H2}) and (\ref{3D H4}), the conclusion on the zero-mode (\ref{3D C3}) is true given $A$ sufficiently large.
The first is the following.
\begin{lem}\label{Lemma:free energy evolution} The time derivative of the approximate free energy $\cF_{\Gamma}[n_0]$, defined in \eqref{def:En0}, satisfies the following estimate:
\begin{equation}\label{free energy evolution}
\frac{d}{dt}\cF_{\Gamma}[n_0(t)] \lesssim \frac{1}{A}||(\na_y c_{\neq}n_{\neq})_0||_{L^2(\rr^2)}^2+\frac{1}{A}||(\na_y c_{\neq}n_{\neq})_0||_{L^{4/3}(\rr^2)}||n_0||_{L^{4/3}(\rr^2)}.
\end{equation}
Furthermore, the following quantity is bounded:
\bel\label{Gamma- control}
-\int_{n_0<1}n_0\Gamma(n_0) dy \leq \frac{3}{2}M.
\eel
\end{lem}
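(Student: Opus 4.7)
The plan is to compute $\tfrac{d}{dt}\cF_{\Gamma}[n_0]$ directly from the $n_0$ equation, isolate the zero-mode self-interaction (expected to be dissipative), and bound the non-zero-mode cross terms by the right-hand side of \eqref{free energy evolution}. First I would average \eqref{KSC3} in $x$ to obtain
\[
\pa_t n_0 = \tfrac{1}{A}\Delta_y n_0 - \tfrac{1}{A}\na_y\cdot\big(n_0\na_y c_0\big) - \tfrac{1}{A}\na_y\cdot\big(\na_y c_\neq\, n_\neq\big)_0,
\]
and couple this with $\pa_t c_0 = (-\Delta_y)^{-1}\pa_t n_0$. Setting $H(n_0):=(n_0\Gamma(n_0))'=\Gamma(n_0)+n_0\Gamma'(n_0)$, the self-adjointness of $(-\Delta_y)^{-1}$ collapses the two contributions from $\tfrac{1}{2}\int n_0 c_0\,dy$ and gives $\tfrac{d}{dt}\cF_{\Gamma}[n_0]=\int(H(n_0)-c_0)\pa_t n_0\,dy$. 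A direct computation from the piecewise definition of $\Gamma$ yields $H'(n_0)=1/n_0$ on $\{n_0\geq 1\}$ and $H'(n_0)=4-3n_0$ on $\{n_0<1\}$, so $H'(n_0)\in(0,4]$ uniformly, and crucially $H'(n_0)n_0\equiv 1$ on $\{n_0\geq 1\}$.

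Substituting the PDE and integrating by parts splits $\tfrac{d}{dt}\cF_{\Gamma}[n_0]$ as $I_{\mathrm{self}}+I_{\mathrm{cross}}$, where
\[
I_{\mathrm{self}}=-\tfrac{1}{A}\!\int\!\big(H'(n_0)\na_y n_0-\na_y c_0\big)\cdot\big(\na_y n_0-n_0\na_y c_0\big)\,dy,
\]
\[
I_{\mathrm{cross}}=\tfrac{1}{A}\!\int\!\big(H'(n_0)\na_y n_0-\na_y c_0\big)\cdot(\na_y c_\neq\, n_\neq)_0\,dy.
\]
On $\{n_0\geq 1\}$, the identity $H'(n_0)n_0\equiv 1$ reduces the integrand of $I_{\mathrm{self}}$ to the non-positive Fisher expression $-\tfrac{1}{n_0}|\na_y n_0-n_0\na_y c_0|^2$. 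On $\{n_0<1\}$ the perfect-square structure fails, but completing the square in the weight $H'(n_0)$ still yields a dissipative $-\tfrac{H'(n_0)}{2}|v|^2$ with $v:=\na_y n_0-n_0\na_y c_0$, up to a residual $\tfrac{(1-H'(n_0)n_0)^2}{2H'(n_0)}|\na_y c_0|^2$ whose coefficient is uniformly bounded on $[0,1]$ by the design of $\Gamma$.

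For $I_{\mathrm{cross}}$ I would split $H'(n_0)\na_y n_0 = H'(n_0)v + H'(n_0)n_0\na_y c_0$ into three pieces. The Fisher-weighted piece $\int H'(n_0)v\cdot(\na_y c_\neq n_\neq)_0\,dy$ is handled by Young's inequality so that half is absorbed into the negative $I_{\mathrm{self}}$ dissipation; since $H'\leq 4$ the remainder is $\lesssim\tfrac{1}{A}\|(\na_y c_\neq n_\neq)_0\|_{L^2}^2$, matching the first term on the right-hand side of \eqref{free energy evolution}. The remaining pieces contract $\na_y c_0$ with $(\na_y c_\neq n_\neq)_0$, and since $H'(n_0)n_0$ is bounded, H\"older with duality $(4,4/3)$ combined with the two-dimensional Hardy--Littlewood--Sobolev / Riesz-potential estimate $\|\na_y c_0\|_{L^4(\rr^2)}\lesssim\|n_0\|_{L^{4/3}(\rr^2)}$ yields $\tfrac{1}{A}\|n_0\|_{L^{4/3}}\|(\na_y c_\neq n_\neq)_0\|_{L^{4/3}}$.

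The main obstacle is the residual $|\na_y c_0|^2$ contribution on $\{n_0<1\}$ from $I_{\mathrm{self}}$: the Fisher perfect-square fails there, and dominating this residual by the same right-hand side is the delicate step, which is precisely what the choice of $\Gamma$ (keeping $H'(n_0)$ bounded away from $0$ and $\infty$ on $[0,1]$) is designed to enable. Finally, for the auxiliary bound \eqref{Gamma- control}, $\Gamma'(n_0)=2-n_0>0$ on $[0,1]$ so $\Gamma$ is increasing from $\Gamma(0)=-3/2$ to $\Gamma(1)=0$; hence $-n_0\Gamma(n_0)\leq\tfrac{3}{2}n_0$ pointwise on $\{n_0<1\}$ and integrating against the conserved mass gives $-\int_{n_0<1}n_0\Gamma(n_0)\,dy\leq\tfrac{3}{2}M$.
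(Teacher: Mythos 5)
The decomposition $\frac{d}{dt}\cF_\Gamma[n_0]=\int(H(n_0)-c_0)\,\partial_t n_0\,dy$ with $H=(n_0\Gamma(n_0))'$ is a clean reorganization of the paper's two-piece computation, and your calculus of $H'$, your treatment of $I_{\mathrm{cross}}$, and your proof of \eqref{Gamma- control} are all correct. The problem is in the treatment of $I_{\mathrm{self}}$ on the set $\{n_0<1\}$: the step where you complete the square in $v=\nabla_y n_0-n_0\nabla_y c_0$ does not close.

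Completing the square on $\{n_0<1\}$ gives
\begin{align*}
I_{\mathrm{self}}\big|_{\{n_0<1\}}
=-\frac{1}{A}\int_{n_0<1}H'(n_0)\Big|v+\tfrac{H'(n_0)n_0-1}{2H'(n_0)}\nabla_y c_0\Big|^2\,dy
\;+\;\frac{1}{A}\int_{n_0<1}\frac{\big(H'(n_0)n_0-1\big)^2}{4H'(n_0)}\,|\nabla_y c_0|^2\,dy,
\end{align*}
and the second integral is a \emph{positive} contribution to $\frac{d}{dt}\cF_\Gamma[n_0]$. The fact that its coefficient $\frac{(H'(n_0)n_0-1)^2}{4H'(n_0)}$ is uniformly bounded on $[0,1]$ is not the issue: the issue is that $\int_{n_0<1}|\nabla_y c_0|^2\,dy$ is simply not dominated by either term on the right-hand side of \eqref{free energy evolution}, and on $\rr^2$ it need not even be a priori finite from the controlled quantities. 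Boundedness of $H'$ on $[0,1]$ was the reason $\Gamma$ was designed the way it was, but it does not, by itself, make this residual go away; your "delicate step" is precisely the gap.

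The paper's argument avoids this residual entirely by making a different algebraic split: on $\{n_0<1\}$ the cross coefficient of $\nabla_y n_0\cdot\nabla_y c_0$ is written as $1+(4-3n_0)n_0$, and the $(4-3n_0)n_0\,\nabla_y n_0\cdot\nabla_y c_0$ piece is bounded by $\tfrac{2}{\sqrt{3}}\sqrt{(4-3n_0)n_0}\,|\nabla_y n_0|\,|\nabla_y c_0|$ (using $\sup_{n_0<1}\sqrt{(4-3n_0)n_0}\le 2/\sqrt{3}$); this constant is tuned so that after completing a square in $|\nabla_y n_0|,|\nabla_y c_0|$ the $|\nabla_y c_0|^2$ coefficient exactly cancels. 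What then remains is the linear term $\int_{n_0<1}\nabla_y n_0\cdot\nabla_y c_0\,dy$, not a quadratic one, and the key structural observation is that this is nonpositive: integrating by parts on the level set $\{n_0\le K\}$ (justified for almost every $K<1$ via Sard's theorem and the regular-value theorem) and using $-\Delta_y c_0=n_0$ gives $\int_{n_0\le K}\nabla_y n_0\cdot\nabla_y c_0\,dy=\int_{n_0\le K}n_0(n_0-K)\,dy\le 0$. This sign fact, which hinges on the specific structure $-\Delta c_0=n_0\ge 0$, is the ingredient your proposal is missing, and it cannot be replaced by the boundedness of $H'$.
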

\begin{proof}
Taking the time derivative of $\cF_{\Gamma}[n_0(t)]$ yields
\begin{align}
\frac{d}{dt}\left(\int n_0\Gamma( n_0)-\frac{n_0c_0}{2}dy\right)
=&\int (n_0)_t(\Gamma (n_0)- c_0)dy+\int n_0(\Gamma (n_0))_tdy\nonumber\\
=&\frac{1}{A}\bigg\{-\int (n_0\na_y \log n_0-\na_y c_0n_0)\cdot(\ga '(n_0)\na_y n_0-\na_y c_0)dy\nonumber\\
&-\int\na_y(n_0\ga'(n_0))\cdot(n_0\na_y \log n_0 -\na_y c_0 n_0)dy\bigg\}\nonumber\\
&+\frac{1}{A}\bigg\{\int (\na_y c_{\neq }n_{\neq})_0\cdot(\ga'(n_0)\na_y n_0-\na_y c_0)dy+\int \na_y(n_0\ga'(n_0))\cdot(\na_y c_{\neq }n_{\neq})_0dy\bigg\}\nonumber\\
=:&\frac{1}{A}T_0+\frac{1}{A}T_{\neq}.\label{T 0, T neq}
\end{align}
The proof of the lemma is completed once we show that the term $T_0$ is non-positive and the term $T_{\neq}$ is controlled in an appropriate way.
Using the definition of $\Gamma$, we have,
\be\ba
T_0
=&-\int_{n_0\geq 1}(n_0\na_y \log n_0-\na_y c_0 n_0)\cdot(\frac{1}{n_0}\na_y n_0-\na_y c_0)dy \\ &  -\int_{n_0<1}(n_0\na_y \log n_0-\na_y c_0 n_0)\cdot((2-n_0)\na_y n_0-\na_y c_0)dy\\
& -\int_{n_0<1}(2-n_0)\na_y n_0\cdot(n_0\na_y \log n_0-\na_y c_0 n_0)dy + \int_{n_0<1}n_0\na_y n_0 \cdot(n_0\na_y \log n_0-\na_y c_0 n_0)dy.\ea\ee
Notice the following inequality:
\be
\sup_{n_0<1}\sqrt{(-3n_0+4)n_0}\leq \frac{2}{\sqrt{3}}<2,
\ee
which implies,
\be\ba
T_0=&-\int_{n_0\geq 1}n_0|\na_y \log n_0-\na_y c_0|^2dy-\int_{n_0<1}(4-3n_0)|\na_y n_0|^2dy\\
&+\int_{n_0<1}\sqrt{(-3n_0+4)n_0}\sqrt{(-3n_0+4)n_0}\na_y c_0 \cdot \na_y n_0dy-\int_{n_0<1}n_0|\na_y c_0|^2 dy+\int_{n_0<1} \na_y n_0\cdot\na_y c_0 dy\\
\leq &-\int_{n_0\geq 1}n_0|\na_y \log n_0-\na_y c_0|^2dy-\int_{n_0<1}(4-3n_0)|\na_y n_0|^2dy\\
&+\frac{2}{\sqrt{3}}\int_{n_0<1}\sqrt{(-3n_0+4)n_0}|\na_y c_0||\na_y n_0|dy-\int_{n_0<1}n_0|\na_y c_0|^2 dy+\int_{n_0<1} \na_y n_0\cdot\na_y c_0 dy.\\
\ea\ee
Completing a square using the 2nd, 3rd, 4th terms in the last line yields
\bel\label{3D zero mode T 0 1}\ba
T_0\leq &-\int_{n_0\geq 1}n_0|\na_y \log n_0-\na_y c_0|^2dy-\frac{2}{3}\int_{n_0<1}(4-3n_0)|\na_y n_0|^2dy\\
&-\int_{n_0 <1} \left(\sqrt{4-3n_0}\frac{1}{\sqrt{3}}|\na_y n_0|-\sqrt{n_0}|\na_y c_0|\right)^2dy+\int_{n_0<1} \na_y n_0\cdot\na_y c_0 dy.
\ea\eel
Now the key is to prove that the term $\int_{n_0<1} \na n_0\cdot\na c_0 dy$ in (\ref{3D zero mode T 0 1}) is negative. We want to integrate by parts to use the relation $-\Delta c_0 = n_0$ and the divergence theorem, however, the level set of a smooth function is not necessarily a smooth sub-manifold.
We recall the Sard theorem and the inverse image theorem of differential topology:
\begin{thm}(Sternberg (\cite{Sternberg64}, Theorem II.3.1); Sard \cite{Sard}) Let $f: \mathbb{R}^n \rightarrow \mathbb{R}^m$ be $C^k$, (that is, k times continuously differentiable), where $k\geq \max\{n-m+1, 1\}$. Let X denote the critical set of f, which is the set of points $x\in \mathbb{R}^n$ at which the Jacobian matrix of f has rank $< m$. Then the image f(X) has Lebesgue measure 0 in $\mathbb{R}^m$. (In other words, almost all points in the image is a regular value.)
\end{thm}
\begin{thm}(\cite{Hirsch76}, page 14)
Let $W$ in $\rr^n$ be an open set and $f:W\rightarrow \rr^q$ a $C^r$ map, $1\leq r\leq \infty$. Suppose $y\in f(W)$ is a regular value of f; this means that $f$ has rank $q$ at every point of $f^{-1}(y)$. (Therefore $q\leq n$) Then the subset $f^{-1}(y)$ is a $C^r$ submanifold of $\rr^n$ of codimension $q$.
\end{thm}
Since the solution $n(t)$ is $C^\infty$ for $t \in (0,T_\star]$, if one is not a regular value for $n_0$, by Sard's theorem and the inverse image theorem, we may find a sequence of $K_j$ such that $K_j \nearrow 1$
and that the level set $\{n_0=K_j\}$ smooth. Therefore, we may integrate by parts:
\be\ba
\int_{n_0\leq K_j<1}\na_y n_0\cdot\na_y c_0 dy=&-\int_{n_0\leq K_j}n_0 \de_y c_0 dy +\int_{n_0=K_j}n_0\na_y c_0\cdot \nu ds\\
=&\int_{n_0\leq K_j}n_0^2dy+K_j \int_{n_0=K_j}\na_y c_0\cdot \nu ds\\
=&\int_{n_0\leq K_j}n_0^2dy+K_j\int_{n_0\leq K_j}\de c_0 dy\\
\leq&\int_{n_0\leq K_j}n_0^2 dy-K_j \int_{n_0 \leq K_j} n_0dy\\
\leq &0.
\ea\ee
As we have $|\na_y n_0\cdot\na_y c_0|\in L^1$ by the Lebesgue dominated convergence theorem we deduce that $\int_{n_0<1}\na_y n_0\cdot\na_y c_0dy\leq 0$.
Therefore, we deduce from (\ref{3D zero mode T 0 1}) that,
\bel\label{3D zero mode T 0}
T_0\leq -\int_{n_0\geq 1}n_0|\na_y \log n_0-\na_y c_0|^2dy-\frac{2}{3}\int_{n_0<1}(4-3n_0)|\na_y n_0|^2dy.
\eel
This finishes the treatment of $T_0$ in (\ref{T 0, T neq}).

Now we come to the treatment of the $T_{\neq}$ in (\ref{T 0, T neq}). The idea is to use the negative terms in (\ref{3D zero mode T 0}) and the fast decay from the bootstrap hypotheses to control part of the influence from $T_{\neq}$. By Young's inequality, we estimate $T_{\neq}$ as follows:
\bel\label{3D zero mode T neq}\ba
T_{\neq} =&\int_{n_0\geq 1}(\na_y c_{\neq}n_{\neq})_0\cdot(\na_y \log n_0-\na_y c_0)dy+\int_{n_0<1}(\na_y c_{\neq}n_{\neq})_0 \cdot ((2-n_0)\na_y n_0-\na_y c_0)dy\\
&+\int_{n_0<1}(2-n_0)\na_y n_0\cdot (\na_y c_{\neq}n_{\neq})_0dy-\int_{n_0<1}n_0\na_y n_0\cdot (\na_y c_{\neq}n_{\neq})_0dy\\
\leq&\int_{n_0\geq 1}\frac{|(\na_y c_{\neq}n_{\neq})_0|}{\sqrt{n_0}}\sqrt{n_0}|\na \log n_0-\na c_0|dy+\int_{n_0<1 }(\na_y c_{\neq}n_{\neq})_0\cdot((4-3n_0)\na_y n_0)dy  \\
&+\int_{n_0<1}|(\na_y c_{\neq}n_{\neq})_0 \cdot \na_y c_0|dy\\
\leq &\int_{n_0\geq 1}B|(\na_y c_{\neq}n_{\neq})_0|^2dy+\frac{1}{B}\int_{n_0\geq 1}n_0|\na_y \log n_0-\na_y c_0|^2dy+\int_{n_0< 1}B|(\na_y c_{\neq}n_{\neq})_0|^2(4-3n_0)dy\\
&+\frac{1}{B}\int_{n_0<1 }(4-3n_0)|\na_y n_0|^2dy+||(\na_y c_{\neq}n_{\neq})_0||_{L^{4/3}(\rr^2)}||\na_y c_0||_{L^4(\rr^2)}.
\ea\eel
Finally, the $||\na_y c_0||_{L^4(\rr^2)}$ in the last line is estimated using the Hardy-Littlewood-Sobolev inequality:
\bel\label{3D zero mode T neq 1}
||\na_y c_0||_{L^4(\rr^2)}\lesssim||n_0||_{L^{4/3}(\rr^2)}.
\eel
Combining (\ref{T 0, T neq}), (\ref{3D zero mode T 0}), (\ref{3D zero mode T neq}) and (\ref{3D zero mode T neq 1}) yields (\ref{free energy evolution}). Estimate (\ref{Gamma- control}) follows from the fact that the function $\Gamma$ is bounded from below. This finishes the proof of Lemma \ref{Lemma:free energy evolution}.
\end{proof}

\begin{lem}\label{Lemma:n log^+n bound}
For $A$ sufficiently large, the approximate free energy is bounded via the following
\begin{align}
\cF_{\Gamma}[n_0(t)] \leq 2\cF_{\Gamma}[n_0(0)]. \label{ineq:cFbd}
\end{align}
 Moreover, there is a constant $C_{L \log L}$ which depends only on $\cF_{\Gamma}[n_0(0)]$ and $M$ such that the following holds independent of time:
\bel\label{n log^+n bound}
\int n_0\log^+n_0 dy\leq C_{L\log L}(\cF_{\Gamma}[n_0(0)],M).
\eel
\end{lem}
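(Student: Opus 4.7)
The plan is two-fold: first, integrate the differential inequality of Lemma \ref{Lemma:free energy evolution} in time and use the bootstrap hypotheses to show the right-hand side contributes only $O(A^{-1/2}\log A)$, yielding \eqref{ineq:cFbd} for $A$ large; second, rearrange \eqref{ineq:cFbd}, invoke the lower bound \eqref{Gamma- control}, and apply the logarithmic Hardy--Littlewood--Sobolev inequality on $\rr^2$ (which is applicable because $M<8\pi$) to obtain \eqref{n log^+n bound}.

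For the first step, I would estimate each source term on the right of \eqref{free energy evolution}. By Minkowski, $\|(\na_y c_{\neq}n_{\neq})_0\|_{L^p(\rr^2)}\lesssim \|\na_y c_{\neq} n_{\neq}\|_{L^p(\T\times\rr^2)}$ for $p\in\{2,4/3\}$. For $p=2$, H\"older and the elliptic estimate from the appendix give $\|\na c_{\neq}n_{\neq}\|_2 \lesssim \|\na c_{\neq}\|_\infty \|n_{\neq}\|_2 \lesssim \CC \|n_{\neq}\|_2$; for $p=4/3$, H\"older combined with Gagliardo--Nirenberg and elliptic regularity yields $\|\na c_{\neq}n_{\neq}\|_{4/3}\lesssim \|\na c_{\neq}\|_2 \|n_{\neq}\|_4 \lesssim \|n_{\neq}\|_2^{3/2}\|\na n_{\neq}\|_2^{1/2}$, while $\|n_0\|_{L^{4/3}(\rr^2)} \leq M^{3/4}C_\infty^{1/4}$ follows by $L^1$--$L^\infty$ interpolation. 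Using the enhanced dissipation \eqref{3D H2} to integrate the exponentially decaying $\|n_{\neq}\|_2^2$ factors, and the $L_t^2 \dot H^1_{x,y}$ bound \eqref{3D H1} together with Cauchy--Schwarz to integrate the $\|n_{\neq}\|_2^{3/2}\|\na n_{\neq}\|_2^{1/2}$ term, the total time integral is bounded by $C(\CC) A^{-1/2}\log A$. Hence $\cF_\Gamma[n_0(t)] \leq \cF_\Gamma[n_0(0)] + \varepsilon(A)$ with $\varepsilon(A)\to 0$ as $A\to\infty$, which yields \eqref{ineq:cFbd} once $A$ is chosen sufficiently large relative to $|\cF_\Gamma[n_0(0)]|$.

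For the second step, rearrange \eqref{ineq:cFbd} as $\int n_0 \Gamma(n_0)\,dy \leq 2\cF_\Gamma[n_0(0)] + \tfrac12 \int n_0 c_0\,dy$, split $\int n_0 \Gamma(n_0) = \int n_0 \log^+ n_0 + \int_{n_0<1} n_0 \Gamma(n_0)$, and apply \eqref{Gamma- control} to the second piece. The interaction term equals $\tfrac12 \int n_0 c_0 = -\frac{1}{4\pi}\iint n_0(y) n_0(z)\log|y-z|\,dy\,dz$. Invoking the logarithmic HLS inequality on $\rr^2$,
\begin{equation*}
\int n_0 \log n_0\,dy + \frac{2}{M}\iint n_0(y)n_0(z)\log|y-z|\,dy\,dz \geq -C_0(M),
\end{equation*}
and rearranging gives $\tfrac12 \int n_0 c_0 \leq \frac{M}{8\pi}\int n_0 \log n_0 + \frac{M C_0(M)}{8\pi}$. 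Using $\int n_0 \log n_0 \leq \int n_0 \log^+ n_0$ (since $\int n_0 \log^- n_0 \geq 0$) and collecting coefficients,
\begin{equation*}
\left(1-\frac{M}{8\pi}\right)\int n_0 \log^+ n_0\,dy \leq 2\cF_\Gamma[n_0(0)] + \frac{3M}{2} + \frac{M C_0(M)}{8\pi},
\end{equation*}
so dividing by $1-M/(8\pi)>0$ yields \eqref{n log^+n bound} with $C_{L\log L}$ depending only on $\cF_\Gamma[n_0(0)]$ and $M$.

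The main obstacle is the first step: the source $(\na_y c_{\neq} n_{\neq})_0$ couples the nonzero modes into the zero-mode evolution, and, unlike in the $\Torus^3$ setting, no pointwise lower bound $1/n_0 \lesssim e^{\overline n t/A}$ is available on $\rr^2$. This is precisely why the truncated entropy $\Gamma$ is introduced: being bounded and with bounded derivative on $\{n_0<1\}$, it removes the need for the $1/n_0$-type pointwise control used in Lemma \ref{lem:lowbd}, and leaves only weighted $L^p$ bounds on $n_{\neq}$ and $n_0$ that are all directly controlled by the bootstrap constants.
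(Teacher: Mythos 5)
Your proposal follows the paper's argument essentially step for step: integrate the differential inequality of Lemma \ref{Lemma:free energy evolution}, bound the two nonzero-mode source terms by $\norm{n_{\neq}}_2^2$ and $\norm{\grad n_{\neq}}_2^2$ via elliptic estimates, Gagliardo--Nirenberg, and Young's inequality, then use \eqref{3D H1} and \eqref{3D H2} to make the time integral vanish as $A\to\infty$, and finally split $\cF_\Gamma$ and apply the logarithmic HLS inequality with \eqref{Gamma- control} to extract the entropy bound. One small slip: you quote the 2D Gagliardo--Nirenberg exponent $\norm{n_{\neq}}_4 \lesssim \norm{n_{\neq}}_2^{1/2}\norm{\grad n_{\neq}}_2^{1/2}$, whereas on $\Torus\times\rr^2$ the correct 3D exponent is $\norm{n_{\neq}}_4 \lesssim \norm{n_{\neq}}_2^{1/4}\norm{\grad n_{\neq}}_2^{3/4}$ (giving $\norm{n_{\neq}}_2^{5/4}\norm{\grad n_{\neq}}_2^{3/4}$ for the $L^{4/3}$ term); this changes the Young-inequality split and the rate to $O(A^{-1/4}\log A)$ rather than $O(A^{-1/2}\log A)$, but the conclusion that the integrated source vanishes for $A$ large is unaffected.
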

\begin{proof}
By an argument similar to the 2D case, we have that $||n_0||_{L^1(\mathbb{T}\times\rr^2)} = M$ is preserved.
Next, we estimate the right-hand side of \eqref{free energy evolution}. 

By Minkowski's inequality,
and applying the elliptic estimate (\ref{elliptic estimate appendix 3D}), we have 
\bel\label{LlogL 1}\ba
\frac{1}{A}||(\na_y c_{\neq}n_{\neq})_0||_{L^2(\rr^2)}^2\leq&\frac{1}{A}||\na_y c_{\neq}n_{\neq}||_{L^2(\mathbb{T}\times\rr^2)}^2\\
\leq&\frac{1}{A}||\na_yc_{\neq}||_\infty^2||n_{\neq}||_{2}^2\\
\lesssim & \frac{\CC^2}{A} \norm{n_{\neq}}_2^2.
\ea\eel
By \eqref{H2}, the time integral of this contribution can be made arbitrarily small by choosing $A$ sufficiently large.
Next, we estimate the second term on the right hand side of (\ref{free energy evolution}). Combining Minkowski's integral inequality, H\"{o}lder's inequality, and the Gagliardo-Nirenberg-Sobolev inequality, we obtain
\bel\label{LlogL 2}\ba
\frac{1}{A}||n_0||_{L^{4/3}(\rr^2)}||(\na_y c_{\neq}n_{\neq})_0||_{L^{4/3}(\rr^2)}
\leq&\frac{1}{A}||n_0||_{L^{4/3}(\mathbb{T}\times\rr^2)}||\na_y c_{\neq}n_{\neq}||_{L^{4/3}(\mathbb{T}\times\rr^2)}\\
\leq &\frac{1}{A}||n_0||_1^{1/2}||n_0||_2^{1/2}||\na_y c_{\neq}||_2||n_{\neq}||_4\\
\lesssim&\frac{1}{A}||n_0||_1^{1/2}||n_0||_2^{1/2}||n_{\neq}||_2||n_{\neq}||_2^{1/4}||\na n_{\neq}||_2^{3/4}\\
\lesssim&\frac{\CC^2}{A^{3/4}}||n_{\neq}||_{2}^2+\frac{1}{A^{5/4}}||\na n_{\neq}||_{2}^2.
\ea\eel
Plugging the estimates (\ref{LlogL 1}) and (\ref{LlogL 2}) in (\ref{free energy evolution}) yields
\bel\label{LlogL 3}
\frac{d}{dt}\int n_0\Gamma(n_0)-\frac{n_0c_0}{2}dy
\lesssim \frac{\CC^2}{A^{3/4}}\norm{n_{\neq}}_2^2 + \frac{1}{A^{5/4}}\norm{\grad n_{\neq}}_2^2.
\eel
It follows from the hypotheses (\ref{3D H2}),(\ref{3D H3}) and (\ref{3D H4}), that the time integral of the right hand side of (\ref{LlogL 3}) can be made arbitrarily small by choosing $A$ large relative to the quantities $||n_{in}||_2,||n_{in}||_{H^1},C_{2,\infty}$, and hence \eqref{ineq:cFbd} follows.   

A uniform in time bound on the free energy \eqref{ineq:cFbd} can be translated to a uniform in time bound on the entropy (\ref{n log^+n bound}) by using the logarithmic Hardy-Littlewood-Sobolev inequality, which we recall here (see e.g. \cite{CarlenLoss92}):

\begin{thm}[Logarithmic Hardy-Littlewood-Sobolev Inequality]For all $M > 0$, there exists a constant $C(M)$ such that for all be a nonnegative functions in $f \in L^1(\rr^2)$ such that $f\log f$ and $f\log(1+|x|^2)$ belong to $L^1(\rr^2)$. If $\int_{\rr^2}fdx=M$, then
\bel\label{log HLS}
\int_{\rr^2}f\log f dx+\frac{2}{M}\iint_{\rr^2\times\rr^2} f(x)f(y)\log|x-y|dxdy\geq -C(M). 
\eel
\end{thm}
We rewrite the approximate free energy so that the inequality (\ref{log HLS}) can be applied, \eqref{ineq:cFbd}:
\be\ba
 2\cF_{\Gamma}[n_{in}]
\geq &\int n_0\Gamma(n_0)-\int\frac{n_0c_0}{2}dy\\
=&\int n_0\log^+n_0dy+\int_{n_0\leq 1}n_0\Gamma(n_0)dy+\frac{1}{4\pi}\iint \log|w-y|n_0(w)n_0(y)dwdy\\
=&\left(1-\frac{M}{8\pi}\right)\int n_0\log^+n_0dy+\int_{n_0\leq 1}n_0\Gamma(n_0)dy\\
&+ \frac{M}{8\pi}\left(\int n_0\log^+n_0dy+\frac{2}{M}\iint \log|w-y|n_0(w)n_0(y)dwdy \right).
\ea\ee
Applying the log-HLS (\ref{log HLS}) and (\ref{Gamma- control}) yield:
\be\ba
 2\cF_{\Gamma}[n_{in}]\geq& \left(1-\frac{M}{8\pi}\right)\int n_0\log^+n_0dy+\int_{n_0\leq 1}n_0\Gamma(n_0)dy- C(M)\frac{M}{8\pi}\\
\geq& \left(1-\frac{M}{8\pi}\right)\int n_0\log^+n_0dy-\frac{3}{2}M-C(M) \frac{M}{8\pi},
\ea\ee
which leads to a bound on the entropy
\be
\int n_0\log^+n_0dy\leq\frac{8\pi}{8\pi-M}\left( 2 \cF[n_{in}]+\frac{3}{2}M+C(M)\frac{M}{8\pi}\right).
\ee
This concludes the proof of Lemma \ref{Lemma:n log^+n bound}.
\end{proof}

\begin{lem}
The bound on the entropy (\ref{n log^+n bound}) yields a uniform in time $L^2$ bound of $n_0$, that is,
\bel\label{L^2 control of n_0}
||n_0||_{L^2}\leq C_{L^2}(n_{in}).
\eel
\end{lem}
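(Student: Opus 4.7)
The plan is to compute $\tfrac{1}{2}\tfrac{d}{dt}\|n_0\|_2^2$ by multiplying the $x$-average of \eqref{KSC3} by $n_0$ and integrating by parts over $\mathbb{R}^2$. Using $-\Delta_y c_0 = n_0$ (so that $\int n_0\nabla_y n_0 \cdot \nabla_y c_0 \, dy = \tfrac12 \int n_0^3 \, dy$), this yields
\begin{align*}
\frac{1}{2}\frac{d}{dt}\|n_0\|_2^2 = -\frac{1}{A}\|\nabla_y n_0\|_2^2 + \frac{1}{2A}\int n_0^3 \, dy + \frac{1}{A}\int (\nabla_y c_{\neq} n_{\neq})_0 \cdot \nabla_y n_0 \, dy.
\end{align*}
The last (nonzero-mode coupling) term is handled exactly as in the proof of \eqref{3D C1}: Cauchy--Schwarz combined with \eqref{elliptic estimate appendix 3D}, the bootstrap hypotheses, and the enhanced dissipation \eqref{3D H2} make the time integral of this term arbitrarily small for $A$ large.

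The main work is controlling the cubic self-interaction $\tfrac{1}{2A}\int n_0^3\,dy$, which is the very term that drives blow-up in the classical 2D Patlak--Keller--Segel. I would exploit the entropy bound \eqref{n log^+n bound} through a de la Vallée-Poussin/equiintegrability truncation: for $K > e$,
\begin{align*}
\int_{n_0 > K} n_0 \, dy \leq \frac{1}{\log K}\int_{n_0 > K} n_0 \log^+ n_0 \, dy \leq \frac{C_{L\log L}}{\log K}.
\end{align*}
Splitting $\int n_0^3 \leq K\|n_0\|_2^2 + \bigl(\int_{n_0 > K}n_0\bigr)^{1/3}\|n_0\|_4^{8/3}$ by Hölder, then using the 2D Gagliardo--Nirenberg--Sobolev inequality $\|n_0\|_4^4 \lesssim \|n_0\|_2^2 \|\nabla_y n_0\|_2^2$ and Young's inequality, I obtain for any $\eta>0$ and $K$ sufficiently large (depending on $\eta$ and $C_{L\log L}$)
\begin{align*}
\frac{1}{2A}\int n_0^3 \, dy \leq \frac{\eta}{A}\|\nabla_y n_0\|_2^2 + \frac{K}{2A}\|n_0\|_2^2 + \frac{C_\eta}{A}\|n_0\|_2^4.
\end{align*}
Choosing $\eta$ small lets the first term be absorbed into the diffusion dissipation.

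Combining with the 2D Nash inequality $\|n_0\|_2^4 \lesssim M^2 \|\nabla_y n_0\|_2^2$ converts the remaining dissipation into a nonlinear damping of $\|n_0\|_2^2$. Packaging the time integrals of the nonzero-mode errors into an auxiliary quantity $G(t)$ (uniformly bounded on $[0,T_\star]$ by \eqref{3D H1}, \eqref{3D H2}, \eqref{3D H4}), one arrives at a differential inequality of the form
\begin{align*}
\frac{d}{dt}\bigl(\|n_0\|_2^2 - G(t)\bigr) \lesssim -\frac{1}{A M^4}\|n_0\|_2^2\bigl(\|n_0\|_2^4 - C_1\|n_0\|_2^2 - C_2\bigr),
\end{align*}
and the $G$-trick of \S\ref{G(t)trick} then yields the uniform bound $\|n_0\|_2 \leq C_{L^2}(n_{in})$.

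The main obstacle is of course the cubic term $\int n_0^3 \, dy$: without the entropy bound \eqref{n log^+n bound} (which is precisely where $M < 8\pi$ was essential, via the free energy and log-HLS used in Lemmas \ref{Lemma:free energy evolution} and \ref{Lemma:n log^+n bound}), there is no way to treat it as a lower-order perturbation. What the entropy provides is just enough equiintegrability to push the dangerous high-density part of $\int n_0^3$ into an $\eta\|\nabla_y n_0\|_2^2$ contribution absorbed by diffusion, leaving only terms that Nash closes.
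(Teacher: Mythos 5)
Your proposal is correct and genuinely different from the paper's proof, though both exploit the same key mechanism. The paper computes $\tfrac{d}{dt}\|(n_0-K)_+\|_2^2$ for the truncated density directly: the Gagliardo--Nirenberg inequality $\int (n_0-K)_+^3 \lesssim \|\nabla(n_0-K)_+\|_2^2\,\|(n_0-K)_+\|_1$ then places the entropy-controlled small $L^1$ factor directly next to the dissipation, so the cubic is absorbed in one step without any Young inequality, and the paper recovers $\|n_0\|_2$ afterwards via $\|n_0\|_{L^2}\lesssim \|(n_0-K)_+\|_{L^2}+K^{1/2}M^{1/2}$. You instead differentiate $\|n_0\|_2^2$ itself and only truncate inside the cubic integral, paying for that with an extra H\"older/Ladyzhenskaya/Young interpolation on the high-density tail; your coefficient of $\|n_0\|_2^4$ carries a factor of $(C_{L\log L}/\log K)$, so it can still be beaten by the Nash dissipation by taking $K$ large, and the argument closes. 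Both routes are valid and hinge on de la Vall\'ee-Poussin from \eqref{n log^+n bound}; the paper's level-set trick is slightly cleaner, yours avoids the final change-of-variable between $\|(n_0-K)_+\|_2$ and $\|n_0\|_2$. One minor slip: your concluding differential inequality is written with the sextic structure $-\|n_0\|_2^2(\|n_0\|_2^4-\cdots)/M^4$, which would follow from the one-dimensional Nash inequality used in the $\Torus^2$ section; on $\rr^2$ the correct Nash inequality (which you in fact state) $\|n_0\|_2^4\lesssim M^2\|\nabla_y n_0\|_2^2$ gives a quartic damping, matching \eqref{d dt n 0-K 2 3}. This does not affect the conclusion since the $G$-trick yields a uniform bound in either case.
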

\begin{proof}
The proof is a small variation of classical Patlak-Keller-Segel techniques (see e.g. \cite{JagerLuckhaus92,BlanchetEJDE06}); we sketch the proof here for completeness.

Let $K>1$ be a constant, to be chosen later.
Observe that \eqref{n log^+n bound}, 
\bel\label{pick K here}
\int(n_0-K)_+dy\leq\int_{n_0>K}n_0dy\leq\frac{1}{\log(K)}\int_{n_0>K}n_0\log^+ (n_0)dy\leq \frac{C_{L\log L}}{\log(K)}.
\eel
Next, via \eqref{def:PKS},
\begin{align}
\frac{1}{2}&\frac{d}{dt}\int (n_0-K)_+^2dy\nonumber\\
=&\frac{1}{A}\int(n_0-K)_+[\de_y n_0-\na_y\cdot(n_0 \na c_0)-\na_y\cdot(\na_y c_{\neq}n_{\neq})_0]dy\nonumber\\
=&-\frac{1}{A}\int |\na((n_0-K)_+)|^2dy+\frac{1}{2A}\int(n_0-K)^3_+dy+\frac{3K}{2A}\int(n_0-K)_+^2dy +\frac{K^2}{A}\int(n-K)_+
dy\nonumber\\
&+\frac{1}{A}\int \na(n_0-K)_+\cdot (\na_y c_{\neq}n_{\neq})_0dy\nonumber\\
\leq&-\frac{7}{8A}\int |\na((n_0-K)_+)|^2dy+\frac{1}{2A}\int(n_0-K)^3_+dy+\frac{3K}{2A}\int(n_0-K)_+^2dy +\frac{K^2M}{A}\nonumber\\
&+\frac{B}{A}||(\na_y c_{\neq}n_{\neq})_0||_{L^2(\rr^2)}^2.\label{d dt n 0-K 2}
\end{align}
Starting with the second term in (\ref{d dt n 0-K 2}), applying the Gagliardo-Nirenberg-Sobolev inequality yields (see e.g. \cite{BlanchetEJDE06} and the references therein)
\be
\int |(n_0-K)_+|^3dy\lesssim \int|\na(n_0-K)_+|^2dy\int (n_0-K)_+dy.
\ee
From (\ref{pick K here}) that we can choose $K$ depending only on $C_{L \log L}$ such that:
\bel\label{d dt n 0-K 2 1}
-\frac{7}{8A}\int |\na((n_0-K)_+)|^2dy+\frac{1}{2A}\int(n_0-K)^3_+dy\leq -\frac{1}{2A}\int |\na((n_0-K)_+)|^2dy.
\eel
Next, we apply Minkowski's inequality, the elliptic estimate (\ref{elliptic estimate appendix 3D}), and the hypothesis (\ref{3D H4}) to control the non-zero mode contribution $||(\na_y c_{\neq}n_{\neq})_0||_{L^2(\rr^2)}^2$ in  (\ref{d dt n 0-K 2}):
\begin{align}
\frac{1}{A}||(\na_y c_{\neq}n_{\neq})_0||_{L^2(\rr^2)}^2 
\lesssim \frac{1}{A}||\na_y c_{\neq}||_\infty^2||n_{\neq}||_2^2 
\lesssim \frac{1}{A}\CC^2 ||n_{\neq}||_{2}^2.  \label{d dt n 0-K 2 2}
\end{align}
Plugging (\ref{d dt n 0-K 2 1}) and (\ref{d dt n 0-K 2 2}) into (\ref{d dt n 0-K 2}) yields
\begin{align}
\frac{1}{2}\frac{d}{dt}\int (n_0-K)_+^2dy
\lesssim& -\frac{1}{2A}\int |\na((n_0-K)_+)|^2dy+\frac{3K}{2A}\int(n_0-K)_+^2dy\nonumber\\
&+\frac{K^2M}{A}+\frac{\CC^2}{A}||n_{\neq}||_{2}^2.\label{d dt n 0-K 2 2.5}
\end{align}
Applying the Nash inequality
\be
||v||_{L^2(\rr^2)}^2\lesssim ||\na v||_{L^2(\rr^2)}||v||_{L^1(\rr^2)}
\ee
in the estimate (\ref{d dt n 0-K 2 2.5}), we obtain
\bel\label{d dt n 0-K 2 3}\ba
\frac{1}{2}\frac{d}{dt}|| (n_0-K)_+||_2^2 \lesssim -\frac{1}{2A}\frac{||(n_0-K)_+||_2^4}{M^2}+\frac{3K}{2A}||(n_0-K)_+||_2^2 +\frac{K^2M}{A}+\frac{1}{A}\CC^2\norm{n_{\neq}(t)}_{2}^2.
\ea\eel
Applying an argument similar to the one used in Section \ref{G(t)trick} to deduce \eqref{ML2}, by choosing $A$ sufficiently large implies $\int(n-K)_+^2dy\leq C(n_{in})$.
Recall the following classical inequality (see e.g. \cite{JagerLuckhaus92,CalvezCarrillo06})
\begin{align*}
\norm{n_0}_{L^2} & \lesssim \norm{(n_0 - K)_+}_{L^2} + K^{1/2} M^{1/2},
\end{align*}
where the implicit constant is independent of $K$ and $M$.
The inequality (\ref{L^2 control of n_0}) hence follows.
\end{proof}

Next, we prove the  higher regularity estimate (\ref{3D H3}) using (\ref{L^2 control of n_0}).
\begin{lem}\label{Lemma:H^1 control of n_0}
For $A$ sufficiently large, provided (\ref{L^2 control of n_0}) holds,  the following improvement to (\ref{3D H3}) holds on $[0,T_\star]$ for a suitable choice of $C_{\dot{H}^1}$:
\bel\label{H^1 control of n_0}
||\na_y n_0||_{L^2(\rr^2)} \leq 2C_{\dot{H}^1}.
\eel
\end{lem}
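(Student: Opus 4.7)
The plan is to mimic the $G(t)$ argument of Section~\ref{G(t)trick}, which established the 2D $\dot{H}^1$ bound on the zero mode, and adapt it to the $\mathbb{T}\times\rr^2$ setting. The key new input, relative to the torus proof, is the $L^2$ control \eqref{L^2 control of n_0} just established via the approximate free energy; this plays the role that the Nash inequality on $\Torus$ played in Section~\ref{G(t)trick} to obtain $n_0\in L^2$ from mass conservation alone. The enhanced dissipation \eqref{3D H2}, the $\dot{H}^1$ dissipation \eqref{3D H1}, and the $L^\infty$ bootstrap \eqref{3D H4} will be used in the usual ways.

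Concretely, I would pair $-\de_y n_0$ against the right-hand side of the zero-mode equation
\be
\pa_t n_0 = \frac{1}{A}\de_y n_0 - \frac{1}{A}\na_y\cdot(n_0\na_y c_0) - \frac{1}{A}(\na\cdot(\na c_{\neq} n_{\neq}))_0,
\ee
which yields the good term $-\frac{1}{A}\|\de_y n_0\|_2^2$, a self-interaction contribution, and a nonzero-mode forcing contribution. For the self-interaction, I would expand $\na_y\cdot(n_0\na_y c_0)=\na_y n_0\cdot\na_y c_0 - n_0^2$ using $-\de_y c_0 = n_0$, bound each piece by Young's inequality, and absorb a fraction of $\frac{1}{A}\|\de_y n_0\|_2^2$. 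The residue reduces to quantities of the shape $\frac{1}{A}\|\na_y c_0\|_\infty^2\|\na_y n_0\|_2^2$ and $\frac{1}{A}\|n_0\|_\infty^2\|n_0\|_2^2$; the latter is uniformly small by \eqref{3D H4} and \eqref{L^2 control of n_0}, while $\|\na_y c_0\|_\infty$ on $\rr^2$ is handled by a Gagliardo--Nirenberg interpolation using $\|n_0\|_{L^1}=M$ together with $\|n_0\|_{L^\infty}\leq 4C_\infty$. This bound on $\|\na_y c_0\|_\infty$ is the main place where the $\rr^2$ setting differs from the torus case of Section~\ref{G(t)trick}.

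For the nonzero-mode forcing term, integration by parts and Young's inequality together with the elliptic estimate \eqref{elliptic estimate appendix 3D} give, after absorbing a fraction of $\frac{1}{A}\|\de_y n_0\|_2^2$, contributions of the form
\be
\frac{B}{A}\|n_{\neq}\|_{L^2(\Torus\times\rr^2)}^2\|n_{\neq}\|_{L^\infty(\Torus\times\rr^2)}^2 + \frac{B\CC^2}{A}\|\na n_{\neq}\|_{L^2(\Torus\times\rr^2)}^2.
\ee
By \eqref{3D H2} and \eqref{3D H1}, the time integral of these expressions is bounded by a constant depending only on $\CC$ and $\|n_{in}\|_{H^1}$, and can be made at most a fixed multiple of $\CC^4$ by choosing $A$ large. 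I would collect these contributions into a uniformly bounded auxiliary function $G(t)$ in direct analogy with \eqref{def:Gsecond}.

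To close the argument, the inequality $\|\na_y n_0\|_2^2 \leq \|n_0\|_2\|\de_y n_0\|_2$ combined with \eqref{L^2 control of n_0} converts the dissipation into a nonlinear damping of the form $-\|\de_y n_0\|_2^2 \leq -\|\na_y n_0\|_2^4 / C_{L^2}^2$, producing a differential inequality
\be
\frac{d}{dt}\left(\|\na_y n_0\|_2^2 - 2G(t)\right) \leq -\frac{1}{ABC_{L^2}^2}\|\na_y n_0\|_2^2\left(\|\na_y n_0\|_2^2 - 2G(t) - B'\right),
\ee
for suitable constants $B,B'$ depending on $\CC,C_{L^2},C_\infty$. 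The factorization argument of Section~\ref{G(t)trick} then yields a uniform-in-time bound $\|\na_y n_0\|_2^2 \lesssim \|\na_y n_{in}\|_2^2 + \CC^4 + C_{L^2}^4$, and choosing $C_{\dot{H}^1}$ large relative to the right-hand side (and then $A$ large) gives the desired improvement \eqref{H^1 control of n_0}. The only substantive obstacle relative to the torus proof is the $\rr^2$ bound on $\|\na_y c_0\|_\infty$, which is standard given the $L^1\cap L^\infty$ control of $n_0$ supplied by the bootstrap; everything else is a routine adaptation of the $G(t)$ trick already used in Section~\ref{G(t)trick}.
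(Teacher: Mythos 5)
Your proposal follows the same route as the paper's proof of Lemma~\ref{Lemma:H^1 control of n_0}: compute $\frac{d}{dt}\|\na_y n_0\|_{L^2}^2$, control the self-interaction using \eqref{L^2 control of n_0}, the $L^\infty$ bootstrap \eqref{3D H4}, and the $\rr^2$ elliptic estimate \eqref{elliptic estimate appendix 3D} for $\|\na_y c_0\|_{L^\infty}$, collect the nonzero-mode forcing into a $G(t)$ using the enhanced dissipation \eqref{3D H2} and the $L^2_t\dot H^1$ bound \eqref{3D H1}, and close with the $G(t)$ factorization argument of Section~\ref{G(t)trick} together with the interpolation $\|\na_y n_0\|_2^2 \leq \|n_0\|_2\|\de_y n_0\|_2$. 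The only place you deviate is in bounding the $\|n_0\|_{L^4}^4$ contribution by $\|n_0\|_{L^\infty}^2\|n_0\|_{L^2}^2$, which produces a residue of the form $K/A$ that is constant in $\|\na_y n_0\|_2$; this does not literally fit the factored form
\be
\frac{d}{dt}\left(\|\na_y n_0\|_2^2 - 2G(t)\right) \leq -\frac{1}{ABC_{L^2}^2}\,\|\na_y n_0\|_2^2\left(\|\na_y n_0\|_2^2 - 2G(t) - B'\right)
\ee
that you wrote, since that requires the remaining forcing to be proportional to $\|\na_y n_0\|_2^2$. The paper instead interpolates $\|n_0\|_4^4 \lesssim \|n_0\|_2^2\|\na_y n_0\|_2^2$ (Gagliardo--Nirenberg) so that the residue is linear in $\|\na_y n_0\|_2^2$ and absorbs cleanly. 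Your estimate still yields a uniform-in-time bound, since the ODE $X' \leq -aX^2 + bX + c$ with $a,b,c>0$ keeps $X$ bounded; just adjust the closing step accordingly (or switch to the Gagliardo--Nirenberg bound) and the argument matches the paper's.
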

\begin{proof}
We employ the following standard multi-index notation:
\be\ba
\alpha=&(\alpha_1,\alpha_2)\in \mathbb{N}^2,\\
\pa_y^\alpha=&\pa_{y_1}^{\alpha_1}\pa_{y_2}^{\alpha_2},\\
||\pa_y^s n_0||_2^2=&\sum_{|\alpha|=s}||\pa_y^\alpha n_0||_2^2.
\ea\ee
Let $\alpha$ be such that $\abs{\alpha} = 1$.
Computing the time derivative of $||\pa_y^\al n_0||_2^2$ and applying $\ep$-Young's inequality:
\bel\label{pa y al n 0}\ba
\frac{1}{2}\frac{d}{dt}||\pa_y^\al n_0||_2^2
=&-\frac{1}{A}\int |\pa_y^\alpha\na n_0|^2dy+\frac{1}{A}\int \pa_y^\alpha\na n_0\cdot\pa_y^\alpha(\na c_0n_0)dy
+\frac{1}{A}\int \pa_y^\al \na n_0\cdot \pa_y^\al(\na c_{\neq} n_{\neq})_0dy\\
=&-\frac{1}{A}\int |\pa_y^\alpha\na n_0|^2dy+\frac{1}{A}\int \pa_y^\al\na n_0\cdot (\pa_y^\al\na c_0 n_0)dy\\
&+\frac{1}{A}\int \pa_y^\al\na n_0\cdot\na c_0\pa_y^{\alpha}n_0dy+\frac{1}{A}\int \pa_y^\al \na n_0\cdot \pa_y^\al(\na c_{\neq} n_{\neq})_0dy\\
\leq&-\frac{7}{8A}\int |\pa_y^\al\na n_0|^2dy+\frac{B}{A}||\pa_y^\al\na c_0n_0||_2^2+\frac{B}{A}||\na c_0 \pa_y^\al n_0||_2^2\\
&+\frac{B}{A}||\pa_y^\al(\na c_{\neq} n_{\neq})_0||_2^2\\
=:&-\frac{7}{8A}\int |\pa_y^\al\na n_0|^2dy+T_1+T_2+NZ.
\ea\eel
We first estimate the term $T_1$ in (\ref{pa y al n 0}). Combining the bound (\ref{L^2 control of n_0}), the Gagliardo-Nirenberg-Sobolev inequality, and the $L^4$ boundedness of the Riesz transform yields,
\bel\label{T_1}\ba
T_1 \lesssim &\frac{1}{A} \norm{\pa_{y}^\al\grad c_0}_4^2||n_0||_4^2
\lesssim \frac{1}{A} ||n_0||_4^4 
\lesssim \frac{1}{A}||n_0||_2^2||\na_y n_0||_2^2
\lesssim \frac{1}{A}C_{L^2}^2||\na_y n_0||_2^2.
\ea\eel
Next for the second term $T_2$ in (\ref{pa y al n 0}), combining the elliptic estimate (\ref{elliptic estimate appendix 3D}) and the hypothesis (\ref{3D H4}) yields
\bel\label{T_2}\ba
T_2
\leq\frac{B}{A}||\na_y c_0||_{\infty}^2||\pa_y^\al n_0 ||_2^2
\lesssim \frac{1}{A}\CC^{2}||\na_y n_0||_2^2.
\ea\eel
Similar to the two dimensional case, the $NZ$ term in (\ref{pa y al n 0}) is estimated using Minkowski's inequality, the elliptic estimate (\ref{elliptic estimate appendix 3D}), (\ref{3D H2}), and (\ref{3D H4}) as follows:
\bel\label{NZ}\ba
NZ\leq&\frac{B}{A}||n_{\neq}||_2^2||n_{\neq}||_\infty^2+\frac{B}{A}||\na_y c_{\neq}||_\infty^2||\na_y n_{\neq}||_2^2\\
\lesssim &\frac{B \CC^2}{A}||n_{\neq}||_2^2+\frac{B \CC^2}{A} ||\na_y n_{\neq}||_2^2.
\ea\eel
Combining the the above estimates (\ref{pa y al n 0}),(\ref{T_1}),(\ref{T_2}),(\ref{NZ}) and summing over $\al=1,2$ yield
\bel\label{energy estimate zero mode 2}\ba
\frac{1}{2}\frac{d}{dt}||\na_y n_0||_2^2
\leq -\frac{1}{2A}||\pa_y^{2}n_0||_2^2+\frac{B}{A}\CC^2||\na_y n_0||_2^2+G'(t),
\ea\eel
where $G(t)$ is defined as
\be
G(t):=\int_0^t\frac{B\CC^2}{A}||n_{\neq}||_2^2 +\frac{B \CC^2}{A}||\na_y n_{\neq}||_2^2d\tau,\quad\forall t\in[0,T_\star].
\ee
Applying an argument similar to the one used in Section \ref{G(t)trick} to prove \eqref{ML2}, choosing $A$ sufficiently large implies that
\bel
\label{zero mode control dim 3}
||\na_y n_0||_{2}^2\lesssim\CC^4
\eel
which is independent of $A$ and $C_{\dot{H}^1}$. Note that we still have the freedom to pick our $C_{\dot{H}^1}$, and we choose it such that $C_{\dot{H}^1}^2$ is much bigger than the right hand side of (\ref{zero mode control dim 3}). This finishes the proof of Lemma \ref{Lemma:H^1 control of n_0} and the conclusion (\ref{3D C3}) follows.
\end{proof}

\appendix
\section{Appendix}
\subsection{$\na c$ estimates}
We have applied various estimates on $\na c_0,\na c_{\neq}$; while all are standard, we sketch the proofs here for the readers' convenience. 

\begin{lem}\label{lem 2}In the two-dimensional case, the following estimate holds for uniformly for all $k \in \Integers \setminus \set{0}$ and $(k^2-\pa_{yy})\wh{c}_k=\wh{n}_k$:
\bel
|k|^{1/2}||\pa_y \widehat{c}_k||_{L^\infty(\mathbb{T})}\lesssim ||\widehat{n}_k||_{L^2(\mathbb{T})}.
\eel
\end{lem}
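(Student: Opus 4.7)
\medskip
\noindent\textbf{Proof plan.} The natural approach is to pass to Fourier series in the $y$ variable. Writing $\widehat{n}_k(y) = \sum_{m \in \Integers} a_m e^{imy}$, the ODE $(k^2 - \partial_{yy})\widehat{c}_k = \widehat{n}_k$ is solved mode-by-mode, giving
\begin{equation*}
\partial_y \widehat{c}_k(y) = \sum_{m \in \Integers} \frac{im \, a_m}{k^2 + m^2} e^{imy}.
\end{equation*}
The plan is to estimate $\|\partial_y \widehat{c}_k\|_{L^\infty(\Torus)}$ by the triangle inequality in $m$, then apply Cauchy--Schwarz in $m$ and Plancherel on $\widehat{n}_k$:
\begin{equation*}
\|\partial_y \widehat{c}_k\|_{L^\infty(\Torus)} \leq \sum_{m \in \Integers} \frac{|m|}{k^2 + m^2} |a_m| \leq \Bigl(\sum_{m \in \Integers} \frac{m^2}{(k^2 + m^2)^2}\Bigr)^{1/2} \|\widehat{n}_k\|_{L^2(\Torus)}.
\end{equation*}
The remaining ingredient is the multiplier bound $\sum_{m} m^2/(k^2+m^2)^2 \lesssim |k|^{-1}$, which follows by rescaling $m = |k|\xi$ and comparing the sum to the finite integral $\int_\R \xi^2/(1+\xi^2)^2\, d\xi$ (the comparison is routine since the summand is monotone away from $m = 0$, giving a $|k|^{-1}$ factor from the Jacobian of the rescaling). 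Combining these bounds yields the desired inequality.

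The proof is essentially mechanical; no step is genuinely difficult. An equivalent route, which one may prefer to present, is via the Green's function representation on $\Torus$: the periodization of $\tfrac{1}{2|k|}e^{-|k||y|}$ yields a kernel $G_k$ with $\|\partial_y G_k\|_{L^2(\Torus)} \lesssim |k|^{-1/2}$, whereupon Young's convolution inequality $\|\partial_y \widehat{c}_k\|_{L^\infty} \leq \|\partial_y G_k\|_{L^2}\|\widehat{n}_k\|_{L^2}$ gives the claim immediately. I would choose whichever presentation is most compact given the conventions already fixed in the paper; the Fourier-series calculation is self-contained and requires no new notation.
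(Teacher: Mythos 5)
Your argument is correct, and the multiplier computation $\sum_m m^2/(k^2+m^2)^2 \lesssim |k|^{-1}$ is accurate (one small imprecision: the summand is not monotone away from $m=0$ — it rises on $0<m<|k|$ and falls on $m>|k|$ — but the integral comparison still holds after splitting at $m=|k|$, since the peak value $\sim |k|^{-2}$ is harmless). The paper's own proof takes a slightly different route: it applies the one-dimensional Gagliardo--Nirenberg--Sobolev inequality $\|\pa_y\wh{c}_k\|_\infty \lesssim \|\wh{c}_k\|_2^{1/4}\|\pa_{yy}\wh{c}_k\|_2^{3/4}$, writes $|k|^{1/2}\|\wh{c}_k\|_2^{1/4} = \||k|^2\wh{c}_k\|_2^{1/4}$, and closes with the elementary elliptic bounds $\|k^2\wh{c}_k\|_2 + \|\pa_{yy}\wh{c}_k\|_2 \lesssim \|\wh{n}_k\|_2$ coming from $(k^2-\pa_{yy})\wh{c}_k=\wh{n}_k$. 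Your Fourier-multiplier argument (equivalently, your Green's-function/Young's-inequality variant, since $\|\pa_y G_k\|_{L^2}^2 = \sum_m m^2/(k^2+m^2)^2$ by Plancherel) is more hands-on and computes the sharp multiplier constant directly, while the paper's GNS interpolation is shorter to write and matches the style of the other appendix estimates, which are all packaged through named Sobolev/Morrey/Calderon--Zygmund inequalities. Both yield the same scaling $|k|^{-1/2}$ and either presentation would serve.
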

\begin{proof} By the Gagliardo-Nirenberg-Sobolev inequality,  
\be
|k|^{1/2}||\pa_y \widehat{c}_k||_\infty\lesssim |k|^{1/2}||\widehat{c}_k||_2^{1/4}||\pa_{yy}\widehat{c}_k||_2^{3/4}\lesssim|||k|^2 \widehat{c}_k||_2^{1/4}||\pa_{yy}\widehat{c}_k||_2^{3/4}\lesssim||\widehat{n}_k||_2.
\ee
This completes the proof of the lemma.
\end{proof}

\begin{lem}\label{lem 3}In the two-dimensional case, the following estimate on $\na c_0$ holds:
\bel
||\pa_y {c}_0||_{L^\infty(\mathbb{T})}\lesssim ||{n}_0-\overline{n}||_{L^1(\mathbb{T})}.
\eel
\end{lem}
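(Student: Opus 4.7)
The plan is to exploit the fact that in one dimension the Poisson equation $-\partial_{yy}c_0 = n_0 - \bar{n}$ can be solved essentially by a double integration, which costs only $L^1$ norms rather than requiring any Sobolev embedding.

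First, I would note that the right-hand side $n_0 - \bar{n}$ has zero mean on $\mathbb{T}$ by construction, which is the necessary solvability condition. Integrating the equation once yields
\[
\partial_y c_0(y) = \partial_y c_0(y_0) - \int_{y_0}^y (n_0(z) - \bar{n})\, dz
\]
for any reference point $y_0 \in \mathbb{T}$. Because $c_0$ is periodic, $\partial_y c_0$ is periodic and hence has mean zero on $\mathbb{T}$; by continuity there must exist a point $y_0$ at which $\partial_y c_0(y_0) = 0$.

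Choosing such a $y_0$, the displayed formula reduces to a single integral, and the trivial estimate
\[
|\partial_y c_0(y)| \leq \int_{\mathbb{T}} |n_0(z) - \bar{n}|\, dz = \|n_0 - \bar{n}\|_{L^1(\mathbb{T})}
\]
holds uniformly in $y$, which is exactly the claim. There is no serious obstacle here; the estimate is a direct consequence of the 1D structure of the elliptic problem on the torus, combined with the zero-mean condition. I would present this in just a few lines.
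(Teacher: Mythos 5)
Your argument is correct and is essentially the paper's proof, written out more explicitly: the paper simply invokes the fundamental theorem of calculus to get $\|\partial_y c_0\|_{L^\infty(\mathbb{T})} \leq \|\partial_{yy}c_0\|_{L^1(\mathbb{T})}$, which implicitly uses the same observation that the periodic, mean-zero function $\partial_y c_0$ vanishes somewhere. No further comment is needed.
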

\begin{proof}
By the fundamental theorem of calculus, $\norm{\partial_y c}_{L^\infty(\Torus)} \leq \norm{\partial_{yy} c_0}_{L^1(\Torus)}$, and hence the lemma follows. 
\end{proof}

\begin{lem}In the two-dimensional case, the following elliptic estimate holds:
\bel\label{elliptic estimate appendix}
||\na (c_{\neq})||_{L^\infty(\mathbb{T}^2)} \lesssim \norm{n_{\neq}}_{L^3(\Torus^2)}. 
\eel
\end{lem}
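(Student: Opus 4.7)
The plan is to reduce the $L^\infty$ bound on $\nabla c_{\neq}$ to an $L^3$ bound on second derivatives of $c_{\neq}$ via Sobolev embedding, and then invoke standard Calder\'on--Zygmund theory on $\Torus^2$. The key observation is the two-dimensional Sobolev embedding $W^{1,p}(\Torus^2) \hookrightarrow L^\infty(\Torus^2)$, valid whenever $p > 2$; we apply it with $p = 3$ to the vector field $\nabla c_{\neq}$, which gives
\[
\norm{\nabla c_{\neq}}_{L^\infty(\Torus^2)} \lesssim \norm{\nabla c_{\neq}}_{L^3(\Torus^2)} + \norm{\nabla^2 c_{\neq}}_{L^3(\Torus^2)}.
\]
It then suffices to control both terms on the right by $\norm{n_{\neq}}_{L^3(\Torus^2)}$.

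For the second-order term, one applies the standard Calder\'on--Zygmund estimate for the torus: since $-\Delta c_{\neq} = n_{\neq}$ and both sides have zero $x$-average, the Riesz transform bounds yield $\norm{\nabla^2 c_{\neq}}_{L^3} \lesssim \norm{n_{\neq}}_{L^3}$ for $1 < p < \infty$. For the first-order term, the crucial point is that $c_{\neq}$ has no zero mode in $x$, so a Poincar\'e-type estimate in the $x$-variable is available. Concretely, on the Fourier side one has $\widehat{c_{\neq}}_k = (k^2 - \partial_{yy})^{-1}\widehat{n_{\neq}}_k$ for $k \neq 0$, so that the multiplier $\nabla(-\Delta)^{-1}$ is bounded on $L^p$ of the nonzero-frequency subspace (equivalently, one uses the Green's function representation together with the fact that the $x$-nonzero restriction removes any low-frequency obstruction). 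This gives $\norm{\nabla c_{\neq}}_{L^3} \lesssim \norm{n_{\neq}}_{L^3}$, and combining the two estimates finishes the proof.

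There is no real obstacle here; the only mild subtlety is ensuring that the Calder\'on--Zygmund and $\nabla(-\Delta)^{-1}$ bounds are applied in the correct subspace. This is automatic because working with $n_{\neq}$ (which annihilates the $x$-constant mode) places both $c_{\neq}$ and its derivatives in the subspace where $(-\Delta)^{-1}$ is bounded with no constant-in-$x$ ambiguity, and all Fourier multipliers involved are Mikhlin-type.
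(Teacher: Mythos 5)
Your proof is correct and takes essentially the same approach as the paper: Morrey/Sobolev embedding $W^{1,3}(\Torus^2)\hookrightarrow L^\infty$ applied to $\nabla c_{\neq}$, Calder\'on--Zygmund for the second derivative, and the absence of the $x$-zero mode to control the first-order term. You simply spell out the "lack of low frequencies" point more explicitly than the paper does.
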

\begin{proof}
By Morrey's inequality, there holds for any $p > 2$, 
\begin{align*}
\norm{\grad c_{\neq}}_{L^\infty(\Torus^2)} \lesssim_p \norm{\grad c}_{L^p} + \norm{\grad^2 c}_{L^p}.  
\end{align*} 
The lemma follows from the Calderon-Zygmund inequality and the lack of low frequencies. 
\end{proof}

In the 3-dimensional case, we need the following lemmas.
\begin{lem}In the 3-dimensional case, the following mode by mode estimates are true:
\bel\label{mode by mode estimates 3D}\ba
||\na_y \widehat{c}_k||_{L^\infty(\rr^2)}\lesssim&||\widehat{n}_k||_{L^2(\rr^2)}^{\frac{1}{2}}||\na_y \widehat{n}_k||_{L^2(\rr^2)}^{\frac{1}{2}}\\
||\na_y \widehat{c}_k||_{L^\infty(\mathbb{T}^2)}\lesssim&||\widehat{n}_k||_{L^2(\mathbb{T}^2)}^{\frac{1}{2}}||\na_y \widehat{n}_k||_{L^2(\mathbb{T}^2)}^{\frac{1}{2}}.
\ea\eel
\end{lem}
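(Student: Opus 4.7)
The plan is to reduce each of the two mode-by-mode estimates to a two-dimensional Gagliardo-Nirenberg-Sobolev inequality applied to $g = \na_y \wh{c}_k$, combined with standard elliptic regularity for the operator $k^2 - \de_y$ acting on the $y$-slice. Since $k \neq 0$, this operator is coercive, and both the $y$-space $\R^2$ and $\Torus^2$ can be treated essentially in parallel.

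First, I would establish the GNS estimate: for $g$ on $\R^2$ (or on $\Torus^2$ with mean zero in $y$),
\begin{equation*}
\|g\|_{L^\infty} \lesssim \|g\|_{L^2}^{1/2}\|\na^2 g\|_{L^2}^{1/2}.
\end{equation*}
On $\R^2$ this follows from the Sobolev embedding $H^2 \hookrightarrow L^\infty$ (valid since $2 > n/2 = 1$) and a dilation argument $g_\lambda(y) := g(\lambda y)$, optimizing $\lambda$ to balance the two norms. On $\Torus^2$, note that $g = \na_y \wh c_k$ is automatically mean-zero (being a gradient), and the same embedding plus the Poincaré inequality $\|g\|_2 \lesssim \|\na g\|_2 \lesssim \|\na^2 g\|_2$ for mean-zero functions yields the same estimate. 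Applied to $g = \na_y \wh c_k$, this gives
\begin{equation*}
\|\na_y \wh c_k\|_\infty \lesssim \|\na_y \wh c_k\|_{L^2}^{1/2} \|\na_y^3 \wh c_k\|_{L^2}^{1/2}.
\end{equation*}

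Second, I would control each factor using elliptic regularity for the equation $(k^2 - \de_y)\wh c_k = \wh n_k$. Taking the Fourier transform in $y$ and using Plancherel, $\widehat{\wh c_k}(\xi) = (k^2 + |\xi|^2)^{-1}\widehat{\wh n_k}(\xi)$, and the pointwise bounds
\begin{equation*}
\frac{|\xi|}{k^2+|\xi|^2} \leq \frac{1}{|k|}, \qquad \frac{|\xi|^3}{k^2 + |\xi|^2} \leq |\xi|,
\end{equation*}
give $\|\na_y \wh c_k\|_{L^2} \lesssim |k|^{-1}\|\wh n_k\|_{L^2}$ and $\|\na_y^3 \wh c_k\|_{L^2} \lesssim \|\na_y \wh n_k\|_{L^2}$. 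Substituting these into the GNS inequality above and using $|k|^{-1/2} \leq 1$ for $k \in \Integers\setminus\{0\}$ yields both displayed estimates in \eqref{mode by mode estimates 3D}; the two cases $y \in \R^2$ and $y \in \Torus^2$ are treated identically from this point.

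I do not anticipate any real obstacle here; the lemma is the natural 3D analog of Lemma \ref{lem 2}. The one small structural difference from the 2D case is that in two-dimensional $y$-space the Sobolev embedding $H^1 \hookrightarrow L^\infty$ fails, so we must interpolate at the level of two full derivatives in $y$, which is precisely the asymmetric form $\|\wh n_k\|_2^{1/2}\|\na_y \wh n_k\|_2^{1/2}$ that appears in the statement (rather than the cleaner bound by a single $L^2$ norm available in the 2D shear setting).
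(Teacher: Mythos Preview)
Your proposal is correct and follows essentially the same approach as the paper: the paper's proof also invokes the Gagliardo--Nirenberg--Sobolev inequality $\|\nabla_y \widehat{c}_k\|_{\infty} \lesssim \|\nabla_y \widehat{c}_k\|_2^{1/2}\|\nabla_y^3 \widehat{c}_k\|_2^{1/2}$ and then says ``from which the result follows,'' leaving implicit exactly the elliptic multiplier bounds $\|\nabla_y \widehat{c}_k\|_2 \lesssim |k|^{-1}\|\widehat{n}_k\|_2$ and $\|\nabla_y^3 \widehat{c}_k\|_2 \lesssim \|\nabla_y \widehat{n}_k\|_2$ that you spell out. Your version is simply a more detailed rendering of the same argument.
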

\begin{proof}
From the Gagliardo-Nirenberg-Sobolev inequality, 
\begin{align*}
\norm{\grad c_{k}}_{L^\infty} \lesssim \norm{\grad c_{k}}_{L^2}^{1/2}\norm{\grad^3 c_{k}}_{L^2}^{1/2}, 
\end{align*}
from which the result follows. 
\end{proof}

Other than the lemma above, we need the following 3D elliptic estimates.  
\begin{lem}In the three-dimensional case, the following elliptic estimates are true:
\bel\label{elliptic estimate appendix 3D}\ba
||\na c_{\neq}||_{L^\infty(\mathbb{T}\times \rr^2)} \lesssim& ||n_{\neq}||_{L^4(\mathbb{T}\times \rr^2)}, \\ 
||\na c_{\neq}||_{L^\infty(\mathbb{T}^3)}^2\lesssim&  ||n_{\neq}||_{L^4(\mathbb{T}\times \rr^2)}, \\
||\na_y c_0||_{L^\infty(\rr^2)}\lesssim& ||n_0||_{L^1(\rr^2)}^{1/4}||n_0||_{L^3(\rr^2)}^{3/4},\\
||\na_y c_0||_{L^\infty(\mathbb{T}^2)}\lesssim& \norm{n_0-\overline{n}}_{L^3(\Torus^2)}.
\ea\eel
\end{lem}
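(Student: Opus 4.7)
The four inequalities split naturally into two groups: the first two concern the nonzero-in-$x$ modes $c_{\neq}$ (which are ``spectrally coercive'' because $-\Delta c_k = n_k$ with $\abs{k}\geq 1$), while the last two concern the zero mode $c_0$ on the two-dimensional slice. For the nonzero modes I would use a Sobolev/Morrey embedding in dimension three together with the Calderon–Zygmund estimate. Specifically, pick $p=4>3$; Morrey's inequality on $\T\times\R^2$ (resp.\ $\T^3$) gives
\[
\norm{\na c_{\neq}}_{L^\infty}\lesssim \norm{\na c_{\neq}}_{L^4}+\norm{\na^2 c_{\neq}}_{L^4}.
\]
Calderon–Zygmund applied mode-by-mode yields $\norm{\na^2 c_{\neq}}_{L^4}\lesssim \norm{n_{\neq}}_{L^4}$, and because $\widehat{c}_k$ is supported in $\abs{k}\geq 1$ one has the spectral-gap estimate $\norm{\na c_{\neq}}_{L^4}\lesssim \norm{\na^2 c_{\neq}}_{L^4}$ (e.g.\ via the Fourier multiplier $\abs{k}/(\abs{k}^2-\pa_{yy})$ applied to $\na c_{\neq}$). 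Combining these steps gives both nonzero-mode estimates.

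For the zero mode on $\R^2$ the equation is $-\de_y c_0=n_0$ on $\R^2$, so
\[
\na_y c_0(y)=-\frac{1}{2\pi}\int_{\R^2}\frac{y-z}{\abs{y-z}^2}n_0(z)dz.
\]
I would cut the integral at a radius $R>0$. On the inner region $\abs{y-z}\leq R$ the singular kernel $\abs{y-z}^{-1}$ lies in $L^{3/2}$ with norm $\lesssim R^{1/3}$, so H\"older bounds this contribution by $R^{1/3}\norm{n_0}_{L^3}$. On the outer region $\abs{y-z}>R$ the kernel is bounded by $R^{-1}$, giving $R^{-1}\norm{n_0}_{L^1}$. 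Optimizing in $R$, i.e., equating the two bounds via $R=(\norm{n_0}_{L^1}/\norm{n_0}_{L^3})^{3/4}$, produces the claimed exponents $\norm{n_0}_{L^1}^{1/4}\norm{n_0}_{L^3}^{3/4}$.

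For the zero mode on $\T^2$, the equation $-\de c_0=n_0-\overline n$ has mean-zero right-hand side, so we may normalize $\int_{\T^2} c_0=0$. By Sobolev embedding $W^{2,3}(\T^2)\hookrightarrow W^{1,\infty}(\T^2)$ (since $3>2$), Calderon–Zygmund, and Poincar\'e–Wirtinger,
\[
\norm{\na c_0}_{L^\infty(\T^2)}\lesssim \norm{c_0}_{W^{2,3}(\T^2)}\lesssim \norm{\na^2 c_0}_{L^3(\T^2)}\lesssim \norm{n_0-\overline n}_{L^3(\T^2)}.
\]

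The only step requiring any genuine care is the convolution-splitting argument on $\R^2$: one must verify that the numerology $1/3+1/(3/2)=1$ and the optimization identity $R^{1/3}\norm{n_0}_3=R^{-1}\norm{n_0}_1$ indeed produce the stated interpolation exponents $(1/4, 3/4)$. The other three estimates reduce to standard Sobolev embedding and Calderon–Zygmund applied to the spectrally nondegenerate operator $-\Delta$ restricted to nonzero $x$-frequencies.
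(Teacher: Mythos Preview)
Your proposal is correct and follows essentially the same approach as the paper: Morrey's inequality plus Calderon--Zygmund (together with the spectral gap at nonzero $x$-frequencies) for the first, second, and fourth estimates, and the convolution-splitting argument with optimization in $R$ for the third. The paper's proof is terser but identical in substance; your verification of the H\"older/optimization numerology for the $\R^2$ case is exactly the computation the paper sketches.
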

\begin{proof} 
The first two inequalities follow from Morrey's inequality and the Calderon-Zygmund inequality, as above. 
Similarly, as does the last inequality. 
The third inequality follows from a standard argument: optimizing over the choice of $R$ we have, 
\begin{align*}
\abs{\grad_y c_0(y)} & \lesssim \abs{\int_{\abs{y-y'} \geq R} \frac{y-y'}{\abs{y-y'}^2} n_0(y') dy'} + \abs{\int_{\abs{y-y'} < R} \frac{y-y'}{\abs{y-y'}^2} n_0(y') dy'} \\ 
& \lesssim \frac{1}{R}\norm{n_0}_{L^1} + \norm{n_0}_{L^3} \left(\int_{\abs{y'} < R} \frac{1}{\abs{y'}^{3/2}} dy'\right)^{2/3} \\ 
& \lesssim \frac{1}{R}\norm{n_0}_{L^1} + \norm{n_0}_{L^3} R^{1/3} \\ 
& \lesssim \norm{n}_{L^1}^{1/4}\norm{n_0}^{3/4}. 
\end{align*}
\end{proof}

\subsubsection*{Acknowledgments}
The authors would like to thank Michele Coti Zelati for helpful discussions and Eitan Tadmor for helpful discussions and for suggesting this problem. 

\bibliographystyle{abbrv}
\bibliography{nonlocal_eqns,JacobBib}

\end{document}